\begin{document}
\newcommand {\emptycomment}[1]{} %to remove paragraphs

\newcommand{\nc}{\newcommand}
\newcommand{\delete}[1]{}
\nc{\mfootnote}[1]{\footnote{#1}} % Use this to show footnotes
\nc{\todo}[1]{\tred{To do:} #1}

\delete{
\nc{\mlabel}[1]{\label{#1}}  % Use this to suppress names
\nc{\mcite}[1]{\cite{#1}}  % Use this to suppress names
\nc{\mref}[1]{\ref{#1}}  % Use this to suppress names
\nc{\meqref}[1]{\eqref{#1}} % Use this to suppress names
\nc{\bibitem}[1]{\bibitem{#1}} % Use this to show number
}

%\delete{
\nc{\mlabel}[1]{\label{#1}  % Use the next two lines to show names
{\hfill \hspace{1cm}{\bf{{\ }\hfill(#1)}}}}
\nc{\mcite}[1]{\cite{#1}{{\bf{{\ }(#1)}}}}  % Use this lines to show names
\nc{\mref}[1]{\ref{#1}{{\bf{{\ }(#1)}}}}  % Use this lines to show names
\nc{\meqref}[1]{\eqref{#1}{{\bf{{\ }(#1)}}}} % Use this lines to show names
\nc{\mbibitem}[1]{\bibitem[\bf #1]{#1}} % Use this to show name
%}

%\baselineskip=12pt

%%%%%%%%%%%%%%%%%%%%%%%% Statements
\newtheorem{thm}{Theorem}[section]
\newtheorem{lem}[thm]{Lemma}
\newtheorem{cor}[thm]{Corollary}
\newtheorem{pro}[thm]{Proposition}
\newtheorem{conj}[thm]{Conjecture}
\theoremstyle{definition}
\newtheorem{defi}[thm]{Definition}
\newtheorem{ex}[thm]{Example}
\newtheorem{rmk}[thm]{Remark}
\newtheorem{pdef}[thm]{Proposition-Definition}
\newtheorem{condition}[thm]{Condition}

%new commands

%%%%%%%%%%%%%%%%%%%%%%%%%%%%%%%%%%%%%%%%%%%%%%%%%%%%%%%%%%%%%%%%%%

\title[Relative Poisson bialgebras and Frobenius Jacobi algebras]{Relative Poisson bialgebras and Frobenius Jacobi algebras}

\author{Guilai Liu}
\address{Chern Institute of Mathematics \& LPMC, Nankai University, Tianjin 300071, China}
\email{1120190007@mail.nankai.edu.cn}

\author{Chengming Bai}
\address{Chern Institute of Mathematics \& LPMC, Nankai University, Tianjin 300071, China }
\email{baicm@nankai.edu.cn}

%\date{\today}%

\begin{abstract}
Jacobi algebras, as the algebraic counterparts of Jacobi
manifolds, are exactly the unital relative Poisson algebras. The
direct approach of constructing Frobenius Jacobi algebras in terms
of Manin triples is not available due to the existence of the
units, and hence alternatively we replace it by studying Manin
triples of relative Poisson algebras. Such structures are
equivalent to certain bialgebra structures, namely, relative
Poisson bialgebras. The study of coboundary cases leads to the
introduction of the relative Poisson Yang-Baxter equation (RPYBE).
Antisymmetric solutions of the RPYBE give coboundary relative
Poisson bialgebras. The notions of $\mathcal O$-operators of
relative Poisson algebras and  relative pre-Poisson algebras are
introduced to give antisymmetric solutions of the RPYBE. A direct
application is that relative Poisson bialgebras can be used to
construct Frobenius Jacobi algebras, and in particular, there is a
construction of Frobenius Jacobi algebras from relative
pre-Poisson algebras.
\end{abstract}

\subjclass[2010]{
17B63,  %Poisson algebras
17A36,  %Automorphisms, derivations, other operators (nonassociative rings and algebras)
18M70,  %Algebraic operads, cooperads, and Koszul duality
17D25,  %Lie-admissible algebras
17A40,  %Ternary compositions
37J39,   %Relations of finite-dimensional Hamiltonian and Lagrangian systems with topology, geometry and differential geometry (symplectic geometry, Poisson geometry, etc.
53D17  %Poisson manifolds; Poisson groupoids and algebroids
}

\keywords{Jacobi algebra; relative Poisson algebra;  Frobenius
Jacobi algebra; bialgebra; classical Yang-Baxter equation;
relative pre-Poisson algebra}

\maketitle

%\vspace{-1.5cm}

\tableofcontents

\allowdisplaybreaks

\section{Introduction}

The aim of this paper is to give a bialgebra theory for
relative Poisson algebras, in which one of the motivations is to
construct Frobenius Jacobi algebras.

\subsection{Generalizations of Poisson algebras}\

Poisson algebras arose in the study of Poisson geometry
(\cite{BV1,Li77,Wei77}) and are closely related to a lot of
topics in mathematics and physics.
%\bf CM: There are too many references on Poisson algebras
%which are not our main topic. Please keep at most 3-5 references
Recall that a \textbf{Poisson algebra} is a vector space $A$ equipped
with two bilinear operations $\cdot, [-,-]:A\otimes A\rightarrow A$ such
that $(A,\cdot)$ is a commutative associative algebra, $(A,[-,-])$
is a Lie algebra and they are compatible in the sense of the Leibniz
rule:
\begin{equation}\label{eq:PA}
[z,x\cdot y]=[z,x]\cdot y+x\cdot[z,y],\;\;\forall x,y,z\in A.
\end{equation}

There are  generalizations of Poisson algebras such as
noncommutative Poisson algebras in which the commutativity of the
associative algebras is cancelled (\cite{Xu}) or variations of
algebraic structures by changing the compatible condition
(\ref{eq:PA}) (cf. \cite{BBGW,Dot,MZ}).  Among the latter, Jacobi
algebras are abstract algebraic counterparts of Jacobi manifolds
(\cite{Kir, Lic}), which are generalizations of symplectic or more
generally Poisson manifolds. Recall that a Jacobi manifold is a
smooth manifold endowed with a bivector $\Lambda$ and a vector
field $E$ satisfying some compatible conditions, or
equivalently, it is a smooth manifold $M$ such that the
commutative associative algebra $A:=C^{\infty}(M)$ of real smooth
functions on $M$ is endowed with a Lie bracket $[-,-]$ satisfying
\begin{equation}\label{eq:JA}
[z,x\cdot y]=[z,x]\cdot y+x\cdot[z,y]+x\cdot y\cdot[1_{A},z],
\;\;\forall x,y,z\in A,
\end{equation}
where $1_A$ is the unit of $A$.
Correspondingly, a
  {\bf Jacobi algebra} is a triple $(A,\cdot,$ $[-,-])$, such that
$(A,\cdot)$ is a unital commutative associative algebra,
$(A,[-,-])$ is a Lie algebra, and they satisfy Eq.~ (\ref{eq:JA}).

Taking $z=1_{A}$ in Eq.~(\ref{eq:JA}), one gets
\begin{equation}\label{eq:JA2}
[1_{A},x\cdot y]=[1_{A},x]\cdot y+x\cdot[1_{A},y],\;\;\forall x,y\in
A.
\end{equation}
Hence $\mathrm{ad}(1_{A})$ is a derivation of both $(A,\cdot)$ and
$(A,[-,-])$, where ${\rm ad}(1_A)(x)=[1_A,x]$ for all $x\in A$.
Therefore there is the following natural generalization of a Jacobi
algebra.

\begin{defi}\label{de:GPA}
A \textbf{relative Poisson algebra} is a quadruple $(A,\cdot,
[-,-],P)$, where $(A,\cdot)$ is a commutative associative algebra,
$(A,[-,-])$ is a Lie algebra, $P$ is a derivation of both
$(A,\cdot)$ and $(A,[-,-])$, that is, $P$ satisfies the following
conditions:
\begin{equation}\label{eq:DCAA}
P(x\cdot y)=P(x)\cdot y+x\cdot P(y),
\end{equation}
\begin{equation}\label{eq:DLA}
P([x,y])=[P(x),y]+[x,P(y)],
\end{equation}
for all $x,y\in A$, and the relative Leibniz rule is satisfied, that is,
\begin{equation}\label{eq:GLR}
[z,x\cdot y]=[z,x]\cdot y+x\cdot[z,y]+x\cdot y\cdot P(z),\;\;\forall
x,y,z\in A.
\end{equation}
We also denote it simply by $(A,P)$.
\end{defi}

A relative Poisson algebra $(A,\cdot,[-,-],P)$ is called unital
if $(A,\cdot)$ is a unital commutative associative algebra.
There is a one-to-one correspondence between Jacobi algebras and unital relative Poisson algebras: a Jacobi algebra $(A,\cdot,[-,-])$ is a unital
    relative Poisson algebra $(A,\cdot,[-,-],\mathrm{ad}(1_{A}))$, whereas a unital relative Poisson algebra
    $(A,\cdot,[-,-],P)$ is  a Jacobi algebra since one gets
    $P(x)=[1_{A},x]$ for all $x\in A$ by Eq.~(\ref{eq:GLR}).
Moreover, one can derive unital relative Poisson algebras (and
thus Jacobi algebras) from not necessarily unital ones (see Lemma
\ref{pro:standard Jacobi algebra}). That is, relative Poisson
algebras not only generalize Jacobi algebras in a natural way, but
also provide a good supplement for the latter. It is also natural
to consider the possible geometry related to relative Poisson
algebras, which is still little known and will be an interesting
topic in the future study.

\delete{ Obviously a Jacobi algebra $(A,\cdot,[-,-])$ is indeed a
unital relative Poisson algebra
$(A,\cdot,[-,-],\mathrm{ad}(1_{A}))$. Conversely, any unital
relative Poisson algebra $(A,\cdot,[-,-],P)$ is a Jacobi algebra
since one gets $P(x)=[1_{A},x]$ for all $x\in A$ by
Eq.~(\ref{eq:GLR}).}
%Note that
%there are other algebraic structures also called generalized
%Poisson algebras (\cite{Fum, She}), which are different from
%Definition \ref{de:GPA}.

On the other hand, relative Poisson algebras are also motivated by
different approaches. In fact, they appeared first in a graded
form under the name of generalized Poisson superalgebras
(\cite{Can}), which arose in the classification of a class of
simple Jordan superalgebras, named Kantor series (\cite{Kan,
Kin}). See \cite{Can2010, Kay} for more details and the related
topics. Note that there are other different algebraic structures
also called generalized Poisson algebras (\cite{Fum, She}). Hence
we adopt the present notion of relative Poisson algebras.

%Recall a relative Poisson superalgebra is a superspace $A$,
%equipped with two operations $\cdot, [-,-]:A\otimes A\rightarrow
%A$, such that $(A,\cdot)$ is a supercommutative associative
%superalgebra, $(A, [-,-])$ is a Lie superalgebra, and they satisfy
%$$[a,b\cdot c]=[a,b]\cdot c+(-1)^{\mid a\mid \mid b\mid}b\cdot[a,c]+P(a)\cdot b\cdot c,\forall a,b,c\in A,$$
%for some even derivation $P$ of $A$ with respect to both $\cdot$ and $[-,-]$. If $P=0$, it is just a Poisson superalgebra.

%In this paper, we focus on relative Poisson algebras, which can be regarded as relative Poisson superalgebras with odd parts zero. For clarity, we rewrite the definition of relative Poisson algebras as follows.

Note that
  any Poisson algebra can be viewed as a
relative Poisson algebra with the derivation $P=0$. On the other
hand, a relative Poisson algebra $(A,\cdot, [-,-], P)$ in which
the commutative associative algebra $(A,\cdot)$ is trivial, that
is, $x\cdot y=0$ for all $x,y\in A$, is exactly the structure
consisting of a Lie algebra $(A, [-,-])$ and a derivation $P$ of
it. A less trivial example of a relative Poisson algebra is given
as follows.

\begin{ex}\label{ex:GPA}
Let $(A,\cdot)$ be a commutative associative algebra with a
derivation $P$. Define
\begin{equation}\label{eq:EXGPA}
[x,y]=x\cdot P(y)-P(x)\cdot y,\;\;\forall x,y\in A.
\end{equation}
Then $(A,[-,-])$ is a Lie algebra and $(A,\cdot, [-,-], P)$ is a
relative Poisson algebra.
\end{ex}

\subsection{Frobenius Jacobi algebras and relative Poisson bialgebras}\

It is important to study Frobenius algebras due to their close
relationships with many areas such as topology, algebraic geometry,
category theory, Hochschild cohomology and graph theory
(\cite{Bai2010,Iov,Koc,Lam}). An associative algebra $(A,\cdot)$
is called  Frobenius  (\cite{Bra, Fro}) if there exists a
nondegenerate bilinear form $\mathcal{B}$ which is
 invariant in the sense that
\begin{equation}\label{eq:Fro asso}
    \mathcal{B}(x\cdot y,z)=\mathcal{B}(x,y\cdot z),\;\;\forall x,y,z\in A.
\end{equation}
Note that if $(A,\cdot)$ is unital and commutative, then %the Frobenius condition
Eq.~(\ref{eq:Fro asso}) requires $\mathcal{B}$ to be symmetric.
Such bilinear forms on associative algebras reflect a certain
natural symmetry, characterizing the so-called Frobenius property.
In the context of Lie algebras, that is, a Lie algebra
$(\mathfrak{g},[-,-])$ equipped with a nondegenerate bilinear form
$\mathcal{B}$ which is invariant in the sense that
\begin{equation}\label{eq:Fro Lie}
    \mathcal{B}([x,y],z)=\mathcal{B}(x,[y,z]),\;\;\forall x,y,z\in
    \mathfrak{g},
\end{equation}
is called self-dual (\cite{JMF}). When $\mathcal{B}$ is symmetric,
%as normally assumed,
such a structure is also called a quadratic (\cite{Med1}) or a
metric Lie algebra (\cite{Kat}).

%known as ``quadratic"  or ``metrised" (\cite{Bor}), ``metric" ,
%``orthogonal" (\cite{Med2}) and so on due to its wide applications
%in various perspectives.

\delete{
The Frobenius property  reflects a certain natural symmetry  and can be extended to nonassociative algebras (see \cite{Bor}).
For instance, a Lie algebra $(\mathfrak{g},[-,-])$ equipped with a  bilinear form $\mathcal{B}$ which is nondegenerate and invariant in the sense that
\begin{equation}\label{eq:Fro Lie}
    \mathcal{B}([x,y],z)=\mathcal{B}(x,[y,z]),\;\;\forall x,y,z\in \mathfrak{g}
\end{equation}
is called pseudo-metrised therein, and is called metrised when $\mathcal{B}$ is moreover symmetric.}
\delete{different types of algebras, such as Lie algebras (\cite{Ela, Oom}) and Poisson algebras (\cite{Luo,Zhu}) for different motivations.}

In order to study the Frobenius property of Jacobi algebras, the
notion of Frobenius Jacobi algebras was introduced in \cite{Ago}.
That is, a Jacobi algebra  $(A,\cdot,[-,-])$ is called Frobenius
if there exists a nondegenerate symmetric bilinear form $\mathcal
B$ on $A$ such that Eqs.~(\ref{eq:Fro asso}) and (\ref{eq:Fro
Lie}) hold. To get more examples of Frobenius Jacobi algebras is
obviously an important problem.

%Although Frobenius Jacobi algebras  admit various properties and
%applications, their construction theory remains unknown.

%Some known methods to construct quadratic Lie algebras or
%Frobenius algebras is to consider the ``Manin triples" or in other
%names ``double constructions". For instance,
An important class of quadratic Lie algebras is given by the
following construction of (standard) Manin triples of Lie algebras
(\cite{CP1}). Suppose that $\frak g$ is a Lie algebra and there
exists a Lie algebra structure on the linear dual space $\frak
g^*$, such that there is a Lie algebra structure on the direct sum
$\frak g\oplus \frak g^*$ of vector spaces including $\frak g$ and
$\frak g^*$ as Lie subalgebras and the bilinear form
\begin{equation}\label{eq:BilinearForm1}
    \mathcal{B}_{d}(x+a^{*},y+b^{*})=\langle x,b^{*}\rangle+\langle
    a^{*},y\rangle,\;\;\forall x,y\in \frak g, a^{*},b^{*}\in \frak g^{*},
\end{equation}
on $\frak g\oplus \frak g^*$ is invariant. $(\frak g\oplus \frak
g^*,\frak g,\frak g^*)$ is called a (standard) Manin triple of Lie
algebras and obviously, the Lie algebra $\frak g\oplus \frak g^*$
with the symmetric nondegenerate bilinear form $\mathcal B_{d}$ given by
Eq.~(\ref{eq:BilinearForm1}) is a quadratic Lie algebra. The above
construction in the context of associative algebras is called a
double construction of a Frobenius algebra (\cite{Bai2010}), whereas in the context of Poisson algebras is called a Manin triple of Poisson algebras (\cite{NB}).

On the other hand, the above constructions also correspond to certain
bialgebra structures with different motivations and applications. A bialgebra is a coupling of an algebra and
a coalgebra satisfying certain compatible conditions. In fact, a
Manin triple of Lie algebras is equivalent to a Lie bialgebra
(\cite{CP1,Dri}) which serves as  the algebraic structure  of a
Poisson-Lie group  and plays an important role in the
infinitesimalization of a quantum group.
A double construction of a Frobenius algebra is
equivalent to  an antisymmetric infinitesimal bialgebra
(\cite{Agu2000, Agu2001, Agu2004, Bai2010}), which is a special infinitesimal bialgebra introduced by Joni and Rota in order to provide an algebraic framework for the calculus of divided
differences~(\cite{JR}). A Manin triple of Poisson algebras is equivalent to a Poisson bialgebra (\cite{NB}), which naturally fits into a framework to construct compatible Poisson brackets in integrable systems.

Therefore in order to get Frobenius Jacobi algebras, it is natural
to consider generalizing the above construction to the case of
Jacobi algebras, that is, to consider  the analogue structures
like ``Manin triples of Jacobi algebras" or ``double constructions
of Frobenius Jacobi algebras" and the corresponding bialgebra structures. Unfortunately, such an approach is
unavailable, since in general a unital commutative associative
algebra cannot be decomposed into the direct sum of the underlying
vector spaces of two unital commutative associative subalgebras.
That is, the structure of Manin triples does not make sense for
Jacobi algebras due to the existence of the units in the
commutative associative algebras. On the other hand,
alternatively, the approach is still available for relative
Poisson algebras, that is,  ``Manin triples of relative
Poisson algebras" make sense for relative Poisson algebras
without ``the constraint of the units". More interestingly, we
still may get examples of Frobenius Jacobi algebras from Manin
triples of relative Poisson algebras due to the fact that a
unital commutative  associative algebra might be decomposed into the
direct sum of the underlying vector spaces of two commutative
associative subalgebras, in which one of them is unital.

In this paper, we realize the above approach for relative Poisson
algebras to construct Frobenius Jacobi algebras. Explicitly, we
introduce the notions of Manin triples of relative Poisson
algebras and the corresponding bialgebra structures, namely,
relative Poisson bialgebras. The equivalence between them is
interpreted  in terms of matched pairs of relative Poisson
algebras. Relative Poisson bialgebras  share some similar
properties of Lie bialgebras or antisymmetric infinitesimal
bialgebras. In particular, there are also coboundary cases which
lead to the introduction of an analogue of the classical
Yang-Baxter equation in a Lie algebra, called relative Poisson
Yang-Baxter equation (RPYBE) in a relative Poisson algebra.
Antisymmetric solutions of the RPYBE in relative Poisson algebras
give rise to coboundary relative Poisson bialgebras, whereas the
notions of $\mathcal{O}$-operators of relative Poisson algebras
and relative pre-Poisson algebras are introduced to construct such
solutions. Moreover, one can get Frobenius Jacobi algebras from
relative Poisson bialgebras satisfying some additional conditions, and in particular, there is a construction of Frobenius Jacobi
algebras from relative pre-Poisson algebras.

\subsection{Layout of the paper}
 The paper is organized as
follows.

In Section 2, we introduce the notions of representations and
matched pairs of relative Poisson algebras. The condition that a
relative Poisson algebra is dually represented is considered in
order to construct a reasonable representation on the dual space.
There is a relationship between representations of relative
Poisson algebras and representations of Jacobi algebras.  We also
explain why there is not a ``matched pair theory" for Jacobi
algebras.

In Section 3, we introduce the notion of  Manin triples of
relative Poisson algebras, which are equivalent to  certain
matched pairs of relative Poisson algebras. Then we introduce the
notion of relative Poisson bialgebras as the equivalent structures
to the two former structures.

In Section 4, the coboundary relative Poisson bialgebras are
studied, leading to the introduction of the RPYBE in a relative
Poisson algebra. Antisymmetric solutions of the RPYBE in
relative Poisson algebras give rise to coboundary relative Poisson
bialgebras.  The notions of $\mathcal{O}$-operators of relative
Poisson algebras and relative pre-Poisson algebras are introduced
to construct antisymmetric solutions of the RPYBE and hence
relative Poisson bialgebras.

In Section 5, we give a construction of Frobenius Jacobi algebras
from relative Poisson bialgebras.  In particular, Frobenius Jacobi
algebras can be obtained from relative pre-Poisson algebras.

Throughout this paper,  unless otherwise specified, all the vector
spaces and algebras are finite-dimensional over an algebraically
closed field $\mathbb {K}$ of characteristic zero, although many
results and notions remain valid in the infinite-dimensional case.

\section{Representations and matched pairs of relative Poisson algebras}

In Section~\ref{ss:2.1}, we  introduce the notion of a
representation of a relative Poisson algebra.
The case when a relative Poisson algebra is dually represented  is then studied, which gives a reasonable representation on the
dual space.
 We also give a relationship between representations
of relative Poisson algebras and representations of Jacobi
algebras  given in \cite{Ago}. In Section~\ref{ss:2.2}, we
introduce the notion of matched pairs of relative Poisson algebras
and explain that a ``matched pair theory" is not available for
Jacobi algebras.

\delete{
At first, we give an identity for a relative Poisson algebra by
Eq.~(\ref{eq:GLR}) directly.

\begin{pro}
    Let $(A,\cdot, [-,-],P)$ be a relative Poisson algebra. Then
    the following identity holds:
    \begin{equation}\label{eq:GLR2}
        [x,y\cdot z]+[y,z\cdot x]+[z,x\cdot y]=P(x\cdot y\cdot z),\;\;\forall
        x, y,z\in A.
    \end{equation}
\end{pro}
}

\subsection{Representations of relative Poisson algebras and Jacobi algebras}\
\label{ss:2.1}

Let $(A,\cdot)$ be a commutative associative algebra. A pair $(\mu,V)$ is called a \textbf{representation} of $(A,\cdot)$ if $V$ is a vector space and $\mu:A\rightarrow\mathrm{End}(V)$ is a linear map satisfying
$$\mu(x\cdot y)=\mu(x)\mu(y),\;\;\forall x,y\in A.$$
Moreover, if $(A,\cdot)$ is unital with the unit $1_A$, then the
representation is assumed to be unital, that is, we assume
$\mu(1_{A})v=v,\forall v\in V.$ In fact, $(\mu,V)$ is a
representation of a (unital) commutative associative algebra
$(A,\cdot)$ if and only if the direct sum $A\oplus V$ of vector
spaces is a (unital) commutative associative algebra (the {\bf
semi-direct product}) by defining the multiplication on $A\oplus
V$ by
\begin{equation}\label{eq:SDASSO}
(x+u)\cdot(y+v)=x\cdot y+\mu(x)v+\mu(y)u,\;\;\forall x,y\in A, u,v\in V.
\end{equation}
We denote it by $A\ltimes_{\mu}V$. If $1_{A}$ is the unit of
$(A,\cdot)$, then it is also the unit of $A\ltimes_{\mu}V$.

 Let $\mathcal L:A\rightarrow \mathrm{End}(A)$
be a linear map defined by $\mathcal L(x)(y)=x\cdot y$ for all
$x,y\in A$. Then $(\mathcal{L},A)$ is a representation of
$(A,\cdot)$, called the \textbf{adjoint representation} of
$(A,\cdot)$.

Let $V$ be a vector space. For a linear map
$\varphi:A\rightarrow\mathrm{End}(V)$, define a linear map
$\varphi^{*}:A\rightarrow\mathrm{End}(V^{*})$ by
$$\langle\varphi^{*}(x)u^{*},v\rangle=-\langle u^{*},\varphi(x)v\rangle,\;\;\forall x\in A, u^{*}\in V^{*},v\in V,$$
where $\langle- , -\rangle$ is the ordinary pair between $V$ and
$V^*$. If $(\mu,V)$ is a representation of a commutative
associative algebra $(A,\cdot)$, then $(-\mu^{*},V^{*})$ is also a
representation of $(A,\cdot)$. In particular,
$(-\mathcal{L}^{*},A^{*})$ is a representation of $(A,\cdot)$.

Similarly, let $(A,[-,-])$ be a Lie algebra. A pair $(\rho,V)$ is
called a \textbf{representation} of $(A,[-,-])$ if $V$ is a vector
space and $\rho:A\rightarrow\mathrm{End}(V)$ is a linear map
satisfying
$$\rho([x,y])=[\rho(x),\rho(y)],\;\;\forall x,y\in A.$$
In fact, $(\rho,V)$ is a representation of a Lie algebra
$(A,[-,-])$ if and only if %the direct sum
$A\oplus V$
%of vector spaces
is a Lie algebra (the {\bf semi-direct product}) by
defining the multiplication on $A\oplus V$ by
\begin{equation}\label{eq:SDLie}
[x+u,y+v]=[x,y]+\rho(x)v-\rho(y)u,\;\;\forall x,y\in A, u,v\in V.
\end{equation}
We denote it by $A\ltimes_{\rho}V$.

Let ${\rm ad}:A\rightarrow {\rm End}(A)$ be a linear map defined
by ${\rm ad}(x)(y)=[x,y]$ for all $x,y\in A$. Then
$(\mathrm{ad},A)$ is a representation of  $(A,[-,-])$, called the
\textbf{adjoint representation} of $(A,[-,-])$. Moreover, if
$(\rho,V)$ is a representation of a Lie algebra $(A,[-,-])$, then
$(\rho^{*},V^{*})$  is also a representation of $(A,[-,-])$. In
particular,
  $(\mathrm{ad}^{*},A^{*})$ is a representation of $(A,[-,-])$.

\begin{defi}\label{de:rep}
Let $(A,\cdot,[-,-],P)$ be a relative Poisson algebra,
$(\mu,V)$ be a representation of the commutative associative
algebra $(A,\cdot)$ and $(\rho,V)$ be a representation of the Lie
algebra $(A,[-,-])$.
 We say the two representations $(\mu,V)$ and
$(\rho,V)$ are \textbf{compatible}, or $(\mu,\rho,V)$ is a
\textbf{compatible structure} on $(A,P)$ if the following equation
holds:
\begin{equation}\label{eq:CompStru}
\rho(y)\mu(x)v-\mu(x)\rho(y)v+\mu([x,y])v-\mu(x\cdot
P(y))v=0,\;\;\forall x,y\in A, v\in V.
\end{equation}
If in addition, there is a linear map $\alpha: V\rightarrow V$
satisfying
\begin{equation}\label{eq:rep1}
\alpha(\mu(x)v)-\mu(P(x))v-\mu(x)\alpha(v)=0,
\end{equation}
\begin{equation}\label{eq:rep2}
\alpha(\rho(x)v)-\rho(P(x))v-\rho(x)\alpha(v)=0,
\end{equation}
\begin{equation}\label{eq:rep3}
\rho(x\cdot y)v-\mu(x)\rho(y)v-\mu(y)\rho(x)v+\mu(x\cdot y)\alpha(v)=0,
\end{equation}
for all  $x,y\in A, v\in V$, we say the quadruple
$(\mu,\rho,\alpha,V)$ is a
\textbf{representation} of %the relative Poisson algebra
$(A,P)$. Two representations $(\mu_{1},\rho_{1},\alpha_{1},V_{1})$
and  $(\mu_{2},\rho_{2},\alpha_{2},V_{2})$ of a relative
Poisson algebra $(A,P)$ are called \textbf{equivalent} if there
exists a linear isomorphism $\varphi:V_{1}\rightarrow V_{2}$ such
that
\begin{equation}\label{eq:eqrep1}
\varphi(\mu_{1}(x)v)=\mu_{2}(x)\varphi(v),\;
%\end{equation}
%\begin{equation}\label{eq:eqrep2}
\varphi(\rho_{1}(x)v)=\rho_{2}(x)\varphi(v),\;
%\end{equation}
%\begin{equation}\label{eq:eqrep3}
\varphi(\alpha_{1}(v))=\alpha_{2}(\varphi(v)),\;\;\forall x\in A, v\in
V.
\end{equation}
\end{defi}

For vector spaces $V_1$ and $V_2$, and linear maps
$\phi_1:V_1\rightarrow V_1$ and $\phi_2:V_2\rightarrow V_2$, we
abbreviate $\phi_1+\phi_2$ for the linear map
\begin{equation}
\phi_{V_1\oplus V_2}: V_1\oplus V_2\rightarrow V_1\oplus V_2,
\quad \phi_{V_1\oplus V_2}(v_1+v_2):=\phi_1(v_1)+\phi_2(v_2),\;\;
\forall v_1\in V_1, v_2\in V_2. \label{eq:Liehom}
\end{equation}

\begin{pro}\label{pro:semipro}
Let $(A,P)$ be a relative Poisson algebra, $V$ be a vector
space,  and  $\mu,\rho: A\rightarrow \mathrm{End}(V)$ and $\alpha:
V\rightarrow V$ be linear maps. Define two bilinear operations
$\cdot,[-,-]$ on $A\oplus V$ by Eqs.~(\ref{eq:SDASSO}) and
(\ref{eq:SDLie}) respectively. Then $(A\oplus V,\cdot, [-,-],
P+\alpha)$ is a relative Poisson algebra if and only if
$(\mu,\rho,\alpha,V)$ is a representation of $(A,P)$. We denote this
relative Poisson algebra structure ({\bf semi-direct product}) on $A\oplus V$ by
$(A\ltimes_{\mu,\rho}V,P+\alpha)$ or simply $(A\ltimes
V,P+\alpha)$.
\end{pro}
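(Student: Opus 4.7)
The plan is to break the relative Poisson algebra axioms on $A\oplus V$ into four parts and match each one to a condition in Definition~\ref{de:rep}. First, since the product defined by Eq.~(\ref{eq:SDASSO}) forces $V\cdot V=0$, a standard computation shows that $(A\oplus V,\cdot)$ is a commutative associative algebra if and only if $(\mu,V)$ is a representation of $(A,\cdot)$. Analogously, Eq.~(\ref{eq:SDLie}) gives $[V,V]=0$, and the Jacobi identity on $A\oplus V$ reduces exactly to $(\rho,V)$ being a representation of $(A,[-,-])$.

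Next I would verify that $P+\alpha$ is a derivation of $(A\oplus V,\cdot)$. Expanding $(P+\alpha)\bigl((x+u)\cdot(y+v)\bigr)$ and $(P+\alpha)(x+u)\cdot(y+v)+(x+u)\cdot(P+\alpha)(y+v)$ via Eq.~(\ref{eq:SDASSO}), the $A$-components cancel by Eq.~(\ref{eq:DCAA}) for $(A,P)$. What remains lies in $V$; setting $u=0$ (and independently $v=0$) collapses it to Eq.~(\ref{eq:rep1}). The same bookkeeping applied to the bracket, using Eqs.~(\ref{eq:SDLie}) and (\ref{eq:DLA}), reduces the derivation condition for $(A\oplus V,[-,-])$ to Eq.~(\ref{eq:rep2}).

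The main computation is the relative Leibniz rule Eq.~(\ref{eq:GLR}) on $A\oplus V$. For $x,y,z\in A$ and $u,v,w\in V$, I would expand both sides of
\[
[z+w,(x+u)\cdot(y+v)] = [z+w,x+u]\cdot(y+v) + (x+u)\cdot[z+w,y+v] + (x+u)\cdot(y+v)\cdot(P+\alpha)(z+w)
\]
and sort the resulting terms by which of $u,v,w$ appears. Because $V\cdot V=0$ and $[V,V]=0$ in the semi-direct product, no term involves two or more of $u,v,w$ simultaneously, so the identity decouples into independent slots. The pure $A$-piece vanishes by Eq.~(\ref{eq:GLR}) for $(A,P)$. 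Specializing to $u=w=0$ (respectively $v=w=0$) yields Eq.~(\ref{eq:CompStru}) applied to the pair $(x,z)$ (respectively $(y,z)$), using commutativity of $\cdot$; specializing to $u=v=0$ yields Eq.~(\ref{eq:rep3}) applied to $(x,y)$.

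The main obstacle is purely bookkeeping: one must carefully track which products $\mu$, actions $\rho$, and occurrences of $\alpha$ each term produces after using the identity $\mu(P(z))\mu(x)=\mu(x\cdot P(z))$, and confirm that the $w$-slot isolates the term $\mu(x\cdot y)\alpha(w)$ arising from the $\alpha$-part of the extended derivation. Since the reasoning in each step is in fact a sequence of equivalences, the converse direction is read off the same computation, establishing the stated equivalence.
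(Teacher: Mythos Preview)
Your direct verification is correct: the decomposition into the associative/Lie representation conditions, the two derivation conditions (yielding Eqs.~(\ref{eq:rep1}) and (\ref{eq:rep2})), and the relative Leibniz rule split into the $u$-, $v$-, and $w$-slots (yielding Eq.~(\ref{eq:CompStru}) twice and Eq.~(\ref{eq:rep3}) once) all check out as you describe.

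The paper takes a different organizational route: it omits any computation here and instead remarks that Proposition~\ref{pro:semipro} is the special case of the matched-pair Proposition~\ref{pro:matched pairs} obtained by taking $A_2=V$ with the zero multiplication. In effect the paper postpones exactly your calculation to the more general setting, where it is carried out once (with two nontrivial algebras and four structure maps) and then specialized back. Your approach has the advantage of being self-contained and logically prior, so one could in fact \emph{derive} the representation axioms in Definition~\ref{de:rep} from the semi-direct product requirement; the paper's approach avoids repeating the bookkeeping twice. Substantively the two are the same computation.
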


The proof is omitted since this result is a special case of the matched pairs of relative Poisson algebras in Proposition~\ref{pro:matched pairs}, when $A_{2}=V$ is equipped with the zero multiplication.

\begin{ex}
Let $(A,P)$ be a relative Poisson algebra. Then
$(\mathcal{L},\mathrm{ad},P,A)$ is a representation of $(A,P)$, called
the \textbf{adjoint representation} of $(A,P)$.
\end{ex}

\begin{lem}
Let $(A,P)$ be a relative Poisson algebra. If $(\mu,\rho,V)$ is
a compatible structure on $(A,P)$, then
$(-\mu^{*},\rho^{*},V^{*})$ is also a compatible structure on
$(A,P)$.
\end{lem}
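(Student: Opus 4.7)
The plan is straightforward: the two representation conditions on $(-\mu^{*},V^{*})$ and $(\rho^{*},V^{*})$ have already been recorded earlier in the section (the dual of a representation of a commutative associative algebra, twisted by a sign, is again such a representation, and the dual of a Lie algebra representation is again a Lie algebra representation), so the only content is verifying the compatibility identity~\eqref{eq:CompStru} for the pair $(-\mu^{*},\rho^{*})$.

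To check this, I would take arbitrary $x,y\in A$, $u^{*}\in V^{*}$, $v\in V$, and evaluate
\begin{equation*}
\bigl\langle \rho^{*}(y)(-\mu^{*}(x))u^{*}-(-\mu^{*}(x))\rho^{*}(y)u^{*}+(-\mu^{*}([x,y]))u^{*}-(-\mu^{*}(x\cdot P(y)))u^{*},\; v \bigr\rangle
\end{equation*}
by repeatedly moving the dualised operators to the other side of the pairing via the defining identity $\langle\varphi^{*}(x)u^{*},v\rangle=-\langle u^{*},\varphi(x)v\rangle$. After collecting the signs, the expression becomes
\begin{equation*}
\bigl\langle u^{*},\; \rho(y)\mu(x)v-\mu(x)\rho(y)v+\mu([x,y])v-\mu(x\cdot P(y))v\bigr\rangle,
\end{equation*}
which vanishes because $(\mu,\rho,V)$ is assumed to be a compatible structure on $(A,P)$.

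Since $u^{*}$ and $v$ were arbitrary, this establishes~\eqref{eq:CompStru} for $(-\mu^{*},\rho^{*},V^{*})$ and completes the verification. There is no genuine obstacle here; the only step requiring care is bookkeeping the two minus signs coming from $-\mu^{*}$ and the one built into the definition of the dual operator, so that the final pairing reproduces exactly the compatibility identity for $(\mu,\rho,V)$ rather than its negative.
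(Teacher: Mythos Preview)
Your proposal is correct and follows essentially the same approach as the paper: the paper's proof also just pairs the left-hand side of~\eqref{eq:CompStru} for $(-\mu^{*},\rho^{*})$ against an arbitrary $v\in V$, unwinds the duals, and lands on the compatibility identity for $(\mu,\rho,V)$. Your write-up is slightly more explicit in noting that the two underlying representation conditions are already established earlier in the section, but the substantive computation is identical.
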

\begin{proof}
For all $x,y\in A,u^{*}\in V^{*},v\in V$, we have
\begin{eqnarray*}
&&\langle(-\rho^{*}(y)\mu^{*}(x)+\mu^{*}(x)\rho^{*}(y)-\mu^{*}([x,y])+\mu^{*}(x\cdot P(y))u^{*},v\rangle\\
&&=\langle
u^{*},\rho(y)\mu(x)v-\mu(x)\rho(y)v+\mu([x,y])v-\mu(x\cdot
P(y))v\rangle=0.
\end{eqnarray*}
Hence the conclusion holds.
\end{proof}

Let $V_{1},V_{2}$ be two vector spaces and $T:V_{1}\rightarrow
V_{2}$ be a linear map. Denote the dual map
$T^{*}:V_{2}^{*}\rightarrow V_{1}^{*}$ by
$$\langle v_{1},T^{*}(v_{2}^{*})\rangle=\langle T(v_{1}),v_{2}^{*}\rangle,\;\;\forall v_{1}\in V_{1},v^{*}_{2}\in V^{*}_{2}.$$

\begin{pro}\label{pro:dual}
Let $(A,P)$ be a relative Poisson algebra and $(\mu,\rho,V)$ be
a compatible structure on $(A,P)$.  Let $\beta:V\rightarrow V$ be
a linear map. Then $(-\mu^{*},\rho^{*},\beta^{*},V^{*})$ is a
representation of $(A,P)$ if and only if the following equations
are satisfied:
\begin{equation}\label{eq:dualrep1}
\mu(x)\beta(v)-\mu(P(x))v-\beta(\mu(x)v)=0,
\end{equation}
\begin{equation}\label{eq:dualrep2}
\rho(x)\beta(v)-\rho(P(x))v-\beta(\rho(x)v)=0,
\end{equation}
\begin{equation}\label{eq:dualrep3}
-\rho(x\cdot y)v+\rho(y)\mu(x)v+\rho(x)\mu(y)v+\beta(\mu(x\cdot
y)v)=0,
\end{equation}
 for all $x,y\in A,v\in V$.
\end{pro}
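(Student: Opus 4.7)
The plan is to reduce everything to the equivalence of the three extra conditions (\ref{eq:rep1})--(\ref{eq:rep3}) with (\ref{eq:dualrep1})--(\ref{eq:dualrep3}) by dualizing. By Definition~\ref{de:rep}, $(-\mu^{*},\rho^{*},\beta^{*},V^{*})$ being a representation of $(A,P)$ amounts to four ingredients: $(-\mu^{*},V^{*})$ is a representation of the commutative associative algebra $(A,\cdot)$; $(\rho^{*},V^{*})$ is a representation of the Lie algebra $(A,[-,-])$; the compatibility (\ref{eq:CompStru}) holds for $(-\mu^{*},\rho^{*},V^{*})$; and (\ref{eq:rep1})--(\ref{eq:rep3}) hold with $\mu,\rho,\alpha$ replaced by $-\mu^{*},\rho^{*},\beta^{*}$. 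The first two ingredients are noted immediately after the definition of the starred map (as $(-\mu^{*},V^{*})$ and $(\rho^{*},V^{*})$ being representations), and the third is exactly the preceding lemma. So my only task is to show that the three remaining conditions, once dualized, are exactly (\ref{eq:dualrep1})--(\ref{eq:dualrep3}).

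For each of the three equations I would apply the substitution $(\mu,\rho,\alpha)\mapsto(-\mu^{*},\rho^{*},\beta^{*})$, then pair the resulting identity in $V^{*}$ against an arbitrary $v\in V$, pushing every starred operator over to $v$ using the two defining rules $\langle \varphi^{*}(x)u^{*},v\rangle=-\langle u^{*},\varphi(x)v\rangle$ (applied to $\mu^{*}$ and $\rho^{*}$) and $\langle \beta^{*}(u^{*}),v\rangle=\langle u^{*},\beta(v)\rangle$ (applied to $\beta^{*}$). For (\ref{eq:rep1}) this produces $\langle u^{*},\mu(x)\beta(v)-\mu(P(x))v-\beta(\mu(x)v)\rangle=0$, which by nondegeneracy of the pairing is equivalent to (\ref{eq:dualrep1}). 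The same rewriting applied to (\ref{eq:rep2}) yields (\ref{eq:dualrep2}), and applied to (\ref{eq:rep3}), after collecting the four terms, yields (\ref{eq:dualrep3}). Since each passage is a chain of equivalences, both directions of the ``if and only if'' drop out simultaneously.

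The only thing requiring care is sign bookkeeping: the paper uses \emph{two} different starred conventions (a negative transpose for $\mu^{*},\rho^{*}$ and the ordinary transpose for $\beta^{*}$), and each occurrence of $\mu$ picks up an extra minus sign from the replacement $\mu\mapsto -\mu^{*}$. So in a term like $\mu^{*}(x)\beta^{*}(u^{*})$ one minus sign comes from moving $\mu^{*}$ across the pairing and none from moving $\beta^{*}$, while the $-\mu^{*}$ in (\ref{eq:rep1}) contributes a further overall minus; tracking these carefully is the only potential pitfall, but once one organizes the three computations termwise the signs collapse into exactly those appearing in (\ref{eq:dualrep1})--(\ref{eq:dualrep3}).
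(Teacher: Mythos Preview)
Your proposal is correct and matches the paper's own proof essentially line for line: the paper also relies on the preceding lemma (and the standard facts about $-\mu^{*}$ and $\rho^{*}$) for the compatible-structure part, then writes out the three pairings $\langle\,\text{(\ref{eq:rep1})--(\ref{eq:rep3}) with }(-\mu^{*},\rho^{*},\beta^{*})\,,\,v\rangle$ and reads off (\ref{eq:dualrep1})--(\ref{eq:dualrep3}). Your explicit discussion of the two starred conventions and the sign bookkeeping is, if anything, more careful than the paper's presentation.
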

\begin{proof}
For all $x,y\in A, u^{*}\in V^{*},v\in V$, we have
\begin{eqnarray*}&&\langle
\mu^{*}(P(x))u^{*}+\mu^{*}(x)\beta^{*}(u^{*})-\beta^{*}(\mu^{*}(x)u^{*}),v\rangle
    =\langle u^{*},
    -\mu(P(x))v-\beta(\mu(x)v)+\mu(x)\beta(v)\rangle,\\
&&\langle
\rho^{*}(P(x))u^{*}+\rho^{*}(x)\beta^{*}(u^{*})-\beta^{*}(\rho^{*}(x)u^{*}),v\rangle
    =\langle u^{*},
    -\rho(P(x))v-\beta(\rho(x)v)+\rho(x)\beta(v)\rangle,\\
&&\langle \rho^{*}(x\cdot y)u^{*}+\mu^{*}(x)\rho^{*}(y)u^{*}+\mu^{*}(y)\rho^{*}(x)u^{*}-\mu^{*}(x\cdot y)\beta^{*}(u^{*}),v\rangle\\
    &&=\langle u^{*},-\rho(x\cdot y)v+\rho(y)\mu(x)v+\rho(x)\mu(y)v+\beta(\mu(x\cdot y)v)\rangle.
\end{eqnarray*}
Then by Definition \ref{de:rep},
$(-\mu^{*},\rho^{*},\beta^{*},V^{*})$ is a representation of
$(A,P)$ if and only if Eqs.~
(\ref{eq:dualrep1})-(\ref{eq:dualrep3}) hold.
\end{proof}

\begin{defi}\label{defi:dual}
Let $(A,P)$ be a relative Poisson algebra and $(\mu,\rho,V)$ be
a compatible structure on $(A,P)$. Let $\beta:V\rightarrow V$ be a
linear map. If $(-\mu^{*},\rho^{*},\beta^{*},V^{*})$ is a
representation of $(A,P)$, that is, Eqs.~
(\ref{eq:dualrep1})-(\ref{eq:dualrep3}) hold, then we say that
$\beta$ \textbf{dually represents  $(A,P)$ on $(\mu,\rho,V)$}.
When $(\mu,\rho,V)$ is taken to be $(\mathcal{L},\mathrm{ad},A)$
which is the compatible structure of the relative Poisson
algebra $(A,P)$ composing the adjoint representation
$(\mathcal{L},\mathrm{ad},P,A)$, we simply say that $\beta$
\textbf{dually represents} $(A,P)$.
\end{defi}

By Proposition~\ref{pro:dual}, we have the following conclusion.

\begin{cor}\label{cor:dualadj}
Let $(A,P)$ be a relative Poisson algebra. A linear map $Q:
A\rightarrow A$ dually represents $(A,P)$ if and only if the
following equations are satisfied:
   \begin{equation}\label{eq:dualadj1}
   x\cdot Q(y)-P(x)\cdot y-Q(x\cdot y)=0,
   \end{equation}
   \begin{equation}\label{eq:dualadj2}
   [x,Q(y)]-[P(x),y]-Q([x,y])=0,
   \end{equation}
   \begin{equation}\label{eq:dualadj3}
   [x,y\cdot z]+[y,z\cdot x]+[z,x\cdot y]+Q(x\cdot y\cdot z)=0,
   \end{equation}
for all $x,y,z\in A$.
\end{cor}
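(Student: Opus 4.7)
The plan is to invoke Proposition~\ref{pro:dual} directly with the adjoint compatible structure. Since $(\mathcal{L},\mathrm{ad},P,A)$ is the adjoint representation of $(A,P)$, in particular $(\mathcal{L},\mathrm{ad},A)$ is a compatible structure on $(A,P)$, so Proposition~\ref{pro:dual} applies verbatim with $V=A$, $\mu=\mathcal{L}$, $\rho=\mathrm{ad}$, and $\beta=Q$. By Definition~\ref{defi:dual}, $Q$ dually represents $(A,P)$ iff $(-\mathcal{L}^{*},\mathrm{ad}^{*},Q^{*},A^{*})$ is a representation of $(A,P)$, which by Proposition~\ref{pro:dual} is equivalent to the three equations (\ref{eq:dualrep1})--(\ref{eq:dualrep3}) holding with these specific choices.

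The remaining work is simply to rewrite each of these three equations using $\mathcal{L}(x)(y)=x\cdot y$ and $\mathrm{ad}(x)(y)=[x,y]$. Substituting into (\ref{eq:dualrep1}) gives $x\cdot Q(y)-P(x)\cdot y-Q(x\cdot y)=0$, which is (\ref{eq:dualadj1}); substituting into (\ref{eq:dualrep2}) gives $[x,Q(y)]-[P(x),y]-Q([x,y])=0$, which is (\ref{eq:dualadj2}).

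For the third equation, after substitution the vector $v$ becomes an element $z\in A$, and (\ref{eq:dualrep3}) reads
\begin{equation*}
-[x\cdot y,z]+[y,x\cdot z]+[x,y\cdot z]+Q(x\cdot y\cdot z)=0.
\end{equation*}
Using antisymmetry of the Lie bracket on the first term and commutativity of $\cdot$ on the second, this rewrites as (\ref{eq:dualadj3}). There is no real obstacle here — the only subtlety is bookkeeping: matching the symmetric form of (\ref{eq:dualadj3}) (which is cyclic in $x,y,z$) to the asymmetric shape of (\ref{eq:dualrep3}), which is a one-line calculation using commutativity and antisymmetry. Hence the conclusion follows immediately from Proposition~\ref{pro:dual}.
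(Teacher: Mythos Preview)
Your proof is correct and is exactly the approach the paper takes: the corollary is stated immediately after the sentence ``By Proposition~\ref{pro:dual}, we have the following conclusion,'' with no further argument given. Your explicit substitution $V=A$, $\mu=\mathcal{L}$, $\rho=\mathrm{ad}$, $\beta=Q$ and the rewriting of (\ref{eq:dualrep1})--(\ref{eq:dualrep3}) into (\ref{eq:dualadj1})--(\ref{eq:dualadj3}) is precisely what is intended, and the cyclic rewriting of the third equation via antisymmetry and commutativity is the only step with any content.
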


\begin{pro}\label{pro:equivalentconditions}
Let $(A,P)$ be a relative Poisson algebra,
$(\mu,\rho,\alpha,V)$ be a representation of $(A,P)$ and $\beta:
V\rightarrow V$ be a linear map.
\begin{enumerate}
\item Eq.~(\ref{eq:dualrep1}) holds if and only if the following
condition holds:
\begin{equation}\label{eq:alpha-beta1}
(\alpha+\beta)(\mu (x)v)-\mu(x)(\alpha+\beta)(v)=0,\;\;\forall
x\in A,v\in V.\end{equation} \item Eq.~(\ref{eq:dualrep2}) holds
if and only if the following condition holds:
\begin{equation}\label{eq:alpha-beta2}
(\alpha+\beta)(\rho (x)v)-\rho(x)(\alpha+\beta)(v)=0,\;\;\forall
x\in A,v\in V.\end{equation} \item Eq.~(\ref{eq:dualrep3}) holds
if and only if the following equation holds:
\begin{equation}\label{eq:eqdualrep1}
(\alpha+\beta)(\mu(x\cdot y)v)=0,\;\;\forall x,y\in A, v\in V.
\end{equation}
\item If Eq.~(\ref{eq:dualrep1}) holds, then Eq.~
(\ref{eq:eqdualrep1}) holds if and only if the following equation
holds:
\begin{equation}\label{eq:eqdualrep2}
\mu(x\cdot y)(\alpha+\beta)v=0,\;\;\forall x,y\in A, v\in V.
\end{equation}
\end{enumerate}
\end{pro}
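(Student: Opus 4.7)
The plan is to verify each of the four equivalences by a direct algebraic substitution using the defining equations~(\ref{eq:rep1})--(\ref{eq:rep3}) of the representation $(\mu,\rho,\alpha,V)$, together with the compatibility condition~(\ref{eq:CompStru}) and the derivation properties~(\ref{eq:DCAA}) and~(\ref{eq:DLA}) of $P$.

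For item~(1), I would use Eq.~(\ref{eq:rep1}) to substitute $\mu(P(x))v=\alpha(\mu(x)v)-\mu(x)\alpha(v)$ into Eq.~(\ref{eq:dualrep1}); the resulting expression rearranges immediately into $\mu(x)(\alpha+\beta)(v)-(\alpha+\beta)(\mu(x)v)=0$, which is Eq.~(\ref{eq:alpha-beta1}), and the computation reverses, giving the equivalence. Item~(2) is handled identically, with Eq.~(\ref{eq:rep2}) used in place of Eq.~(\ref{eq:rep1}).

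Item~(3) is the computational heart of the proposition and the main obstacle, since Eq.~(\ref{eq:dualrep3}) involves the terms $\rho(y)\mu(x)v$ and $\rho(x)\mu(y)v$ rather than the opposite composition order appearing in Eq.~(\ref{eq:rep3}). My approach is: first, apply the compatibility condition~(\ref{eq:CompStru}) to both $\rho(y)\mu(x)v$ and $\rho(x)\mu(y)v$; when the two resulting expressions are summed, the antisymmetric terms involving $\mu([x,y])v$ cancel, while the terms $\mu(x\cdot P(y))v+\mu(y\cdot P(x))v$ collapse to $\mu(P(x\cdot y))v$ by the derivation property~(\ref{eq:DCAA}). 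Substituting this into Eq.~(\ref{eq:dualrep3}) and then using Eq.~(\ref{eq:rep3}) to eliminate $\mu(x)\rho(y)v+\mu(y)\rho(x)v$ against $\rho(x\cdot y)v$ leaves precisely $\mu(x\cdot y)\alpha(v)+\mu(P(x\cdot y))v+\beta(\mu(x\cdot y)v)=0$. A final application of Eq.~(\ref{eq:rep1}) with $x\cdot y$ in place of $x$ converts $\mu(P(x\cdot y))v$ into $\alpha(\mu(x\cdot y)v)-\mu(x\cdot y)\alpha(v)$, causing the $\mu(x\cdot y)\alpha(v)$ terms to cancel and yielding $(\alpha+\beta)(\mu(x\cdot y)v)=0$, i.e.~Eq.~(\ref{eq:eqdualrep1}); every step is reversible, so this gives the equivalence.

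For item~(4), I would exploit the fact that $\mu$ is a representation of the commutative associative algebra $(A,\cdot)$, so $\mu(x\cdot y)=\mu(x)\mu(y)$. Assuming Eq.~(\ref{eq:dualrep1}), item~(1) gives Eq.~(\ref{eq:alpha-beta1}), and applying it twice yields
\[
(\alpha+\beta)(\mu(x\cdot y)v)=(\alpha+\beta)(\mu(x)\mu(y)v)=\mu(x)(\alpha+\beta)(\mu(y)v)=\mu(x)\mu(y)(\alpha+\beta)(v)=\mu(x\cdot y)(\alpha+\beta)(v),
\]
from which the equivalence of Eqs.~(\ref{eq:eqdualrep1}) and~(\ref{eq:eqdualrep2}) is immediate.
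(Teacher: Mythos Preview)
Your proposal is correct and follows essentially the same approach as the paper: items (1)--(3) use exactly the same ingredients (Eqs.~(\ref{eq:rep1})--(\ref{eq:rep3}), (\ref{eq:CompStru}), and the derivation property) in nearly the same order, and your computation for (3) is a harmless reordering of the paper's. For item~(4) the paper instead applies Eqs.~(\ref{eq:dualrep1}) and~(\ref{eq:rep1}) with $x\cdot y$ in place of $x$ to obtain the identity $(\alpha+\beta)(\mu(x\cdot y)v)=\mu(x\cdot y)(\alpha+\beta)v$ directly, while you factor $\mu(x\cdot y)=\mu(x)\mu(y)$ and iterate Eq.~(\ref{eq:alpha-beta1}); both routes are equally valid and yield the same conclusion.
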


\begin{proof} Let $x,y\in A$ and $v\in V$.

(1) By Eq.~(\ref{eq:rep1}), we have
$$\mu(x)\beta(v)-\mu(P(x))v-\beta(\mu(x)v)=-(\alpha+\beta)(\mu
(x)v)+\mu(x)(\alpha+\beta)(v).$$ Hence Eq.~(\ref{eq:dualrep1})
holds if and only if Eq.~(\ref{eq:alpha-beta1}) holds.

(2) By Eq.~(\ref{eq:rep2}), we have
$$\rho(x)\beta(v)-\rho(P(x))v-\beta(\rho(x)v)=-(\alpha+\beta)(\rho
(x)v)+\rho(x)(\alpha+\beta)(v).$$ Hence Eq.~(\ref{eq:dualrep2})
holds if and only if Eq.~(\ref{eq:alpha-beta2}) holds.

(3) We have
\begin{eqnarray*}
&&-\rho(x\cdot y)v+\rho(y)\mu(x)v+\rho(x)\mu(y)v+\beta(\mu(x\cdot y)v)\\
&&\stackrel{(\ref{eq:rep3})}{=}-\mu(x)\rho(y)v-\mu(y)\rho(x)v+\mu(x\cdot y)\alpha(v)+\rho(y)\mu(x)v+\rho(x)\mu(y)v+\beta(\mu(x\cdot y)v)\\
&&\stackrel{(\ref{eq:CompStru})}{=}\mu(x\cdot P(y))-\mu([x,y])v-\rho(y)\mu(x)v+\mu(y\cdot P(x))-\mu([y,x])v-\rho(x)\mu(y)v\\
&&\mbox{}\hspace{0.3cm}+\mu(x\cdot y)\alpha(v)+\rho(y)\mu(x)v+\rho(x)\mu(y)v+\beta(\mu(x\cdot y)v)\\
&&=\mu(P(x\cdot y))v+\mu(x\cdot y)\alpha(v)+\beta(\mu(x\cdot y)v)\\
&&\stackrel{(\ref{eq:rep1})}{=}(\alpha+\beta)\mu(x\cdot y)v,
\end{eqnarray*}
% CM: it would be better to denote every equal by which equation.
 Thus Eq.~(\ref{eq:dualrep3}) holds if and only if
Eq.~(\ref{eq:eqdualrep1}) holds.

(4) If Eq.~(\ref{eq:dualrep1}) holds, then by Eq.~(\ref{eq:rep1})
again, we have $$ (\alpha+\beta)\mu(x\cdot y)v =\mu(x\cdot
y)\beta(v)-\mu(P(x\cdot y))v+\mu(P(x\cdot y))v+\mu(x\cdot
y)\alpha(v)=\mu(x\cdot y)(\alpha+\beta)v.
$$
Hence in this case, Eq.~(\ref{eq:eqdualrep1}) holds if and only if
Eq.~(\ref{eq:eqdualrep2}) holds.
\end{proof}

\begin{cor}\label{cor:automap}
Let $(\mu,\rho,\alpha,V)$ be a representation of a relative
Poisson algebra $(A,P)$. Then $-\alpha$ dually represents $(A,P)$
on $(\mu,\rho,V)$ automatically, that is,
$(-\mu^*,\rho^*,-\alpha^*,V^*)$ is a representation of $(A,P)$.
In particular, $-P$ dually represents $(A,P)$.
%Hence
%$(A\ltimes_{-\mu^{*},\rho^{*}}V^{*},P-\alpha^{*})$ is a
%$(-P+\alpha^{*})$-admissible relative Poisson algebra.
\end{cor}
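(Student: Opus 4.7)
The plan is to derive this immediately from Proposition~\ref{pro:equivalentconditions} by specializing to $\beta=-\alpha$. With this choice, $\alpha+\beta=0$, so Eqs.~(\ref{eq:alpha-beta1}), (\ref{eq:alpha-beta2}), and (\ref{eq:eqdualrep2}) hold trivially, since both sides vanish identically for all $x,y\in A$ and $v\in V$. Thus there is no real computation to do; the work is just to thread the four items of Proposition~\ref{pro:equivalentconditions} in the right order.

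More precisely, I would proceed as follows. First, by parts (1) and (2) of Proposition~\ref{pro:equivalentconditions}, the trivial identities $(\alpha+\beta)(\mu(x)v)=\mu(x)(\alpha+\beta)(v)=0$ and $(\alpha+\beta)(\rho(x)v)=\rho(x)(\alpha+\beta)(v)=0$ give Eqs.~(\ref{eq:dualrep1}) and (\ref{eq:dualrep2}) for $\beta=-\alpha$. Next, since Eq.~(\ref{eq:dualrep1}) now holds, part (4) tells me that Eq.~(\ref{eq:eqdualrep1}) is equivalent to Eq.~(\ref{eq:eqdualrep2}); but the latter is again trivially $0=0$, so Eq.~(\ref{eq:eqdualrep1}) also holds. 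Finally, part (3) converts Eq.~(\ref{eq:eqdualrep1}) into Eq.~(\ref{eq:dualrep3}). Therefore all three conditions (\ref{eq:dualrep1})--(\ref{eq:dualrep3}) of Proposition~\ref{pro:dual} are satisfied with $\beta=-\alpha$, which by Definition~\ref{defi:dual} means exactly that $-\alpha$ dually represents $(A,P)$ on $(\mu,\rho,V)$, i.e.\ $(-\mu^*,\rho^*,-\alpha^*,V^*)$ is a representation of $(A,P)$.

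For the ``in particular'' assertion, I would just specialize to the adjoint representation $(\mathcal{L},\mathrm{ad},P,A)$ of $(A,P)$, for which $\alpha=P$; the general statement then says that $-P$ dually represents $(A,P)$ on $(\mathcal{L},\mathrm{ad},A)$, which is precisely the definition of $-P$ dually representing $(A,P)$ in the sense of the second half of Definition~\ref{defi:dual}. Concretely this recovers Eqs.~(\ref{eq:dualadj1})--(\ref{eq:dualadj3}) of Corollary~\ref{cor:dualadj} with $Q=-P$, which one could alternatively verify by hand from Eqs.~(\ref{eq:DCAA}), (\ref{eq:DLA}) and (\ref{eq:GLR}), but this direct check is unnecessary given the framework already established.

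There is essentially no obstacle here: the only thing to be careful about is the logical ordering, since part (3) of Proposition~\ref{pro:equivalentconditions} converts Eq.~(\ref{eq:dualrep3}) into Eq.~(\ref{eq:eqdualrep1}) (not directly into the trivial identity (\ref{eq:eqdualrep2})), so one must first use part (4) to replace (\ref{eq:eqdualrep1}) by (\ref{eq:eqdualrep2}), which requires having (\ref{eq:dualrep1}) in hand. Once this order is respected, the proof reduces to a one-line invocation.
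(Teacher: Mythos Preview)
Your proof is correct and follows essentially the same approach as the paper, invoking Proposition~\ref{pro:equivalentconditions} with $\beta=-\alpha$ so that $\alpha+\beta=0$. One minor simplification: the detour through part~(4) is unnecessary, since Eq.~(\ref{eq:eqdualrep1}), namely $(\alpha+\beta)(\mu(x\cdot y)v)=0$, is itself trivially $0=0$ when $\alpha+\beta=0$---so parts (1), (2), (3) alone suffice, and the ``logical ordering'' concern you raise does not actually arise.
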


\begin{proof}
The first part of the conclusion follows from
Proposition~\ref{pro:equivalentconditions} since $\beta=-\alpha$
satisfies Eqs.~(\ref{eq:alpha-beta1})-(\ref{eq:eqdualrep1}). The
second part of the conclusion follows immediately when
$(\mu,\rho,\alpha,V)$ is taken to be the adjoint representation
$(\mathcal{L},\mathrm{ad},P,A)$.
\end{proof}

\begin{cor}\label{cor:dualadj2}
Let $(A,P)$ be a relative Poisson algebra and $Q:A\rightarrow
A$ be a linear map. Then $Q$ dually represents $(A,P)$ if and only
if Eqs.~(\ref{eq:dualadj1}), (\ref{eq:dualadj2}) and the following
equation hold:
\begin{equation}\label{eq:eqdualadj1}
(P+Q)(x\cdot y\cdot z)=0,\;\;\forall x,y,z\in A.
\end{equation}
In particular, if  $Q$ dually represents $(A,P)$, then the
following equation holds:
\begin{equation}\label{eq:eqdualadj2}
x\cdot y\cdot (P+Q)(z)=0,\;\;\forall x,y,z\in A.
\end{equation}
\end{cor}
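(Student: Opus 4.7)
The plan is to derive this corollary as a direct specialization of Proposition~\ref{pro:equivalentconditions} to the adjoint representation $(\mathcal{L}, \mathrm{ad}, P, A)$, with $V=A$ and $\beta=Q$, combined with Corollary~\ref{cor:dualadj}. Under this specialization, the abstract conditions become concrete identities on $A$: Eqs.~(\ref{eq:dualrep1})-(\ref{eq:dualrep3}) translate verbatim into Eqs.~(\ref{eq:dualadj1})-(\ref{eq:dualadj3}), while $\alpha+\beta$ becomes $P+Q$. In particular, since $\mu(x\cdot y)v = x\cdot y\cdot v$, the auxiliary Eq.~(\ref{eq:eqdualrep1}) becomes $(P+Q)(x\cdot y\cdot z)=0$, and Eq.~(\ref{eq:eqdualrep2}) becomes $x\cdot y\cdot (P+Q)(z)=0$.

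For the first statement, I would start from Corollary~\ref{cor:dualadj}, which tells us that $Q$ dually represents $(A,P)$ if and only if Eqs.~(\ref{eq:dualadj1}), (\ref{eq:dualadj2}), (\ref{eq:dualadj3}) all hold. Since $(\mathcal{L},\mathrm{ad},P,A)$ is a representation of $(A,P)$, I can invoke part (3) of Proposition~\ref{pro:equivalentconditions} with $\alpha=P$ and $\beta=Q$: it gives the equivalence of Eq.~(\ref{eq:dualadj3}) with Eq.~(\ref{eq:eqdualadj1}). Substituting this equivalence into the list of conditions from Corollary~\ref{cor:dualadj} yields the claimed characterization via Eqs.~(\ref{eq:dualadj1}), (\ref{eq:dualadj2}) and (\ref{eq:eqdualadj1}).

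For the second statement, suppose $Q$ dually represents $(A,P)$. Then in particular Eq.~(\ref{eq:dualadj1}) holds, which is the translation of Eq.~(\ref{eq:dualrep1}) for the adjoint representation. Hence the hypothesis of part (4) of Proposition~\ref{pro:equivalentconditions} is satisfied, so Eq.~(\ref{eq:eqdualrep1}) and Eq.~(\ref{eq:eqdualrep2}) are equivalent in this setting. In our specialization this reads: $(P+Q)(x\cdot y\cdot z)=0$ if and only if $x\cdot y\cdot (P+Q)(z)=0$. By the first part we already have the former, so the latter follows, giving Eq.~(\ref{eq:eqdualadj2}).

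I do not anticipate any genuine obstacle here: the only work is the bookkeeping of translating the representation-level conditions on $(\mu,\rho,\alpha,V)$ with $\beta$ into identities on $A$ involving $\mathcal{L}$, $\mathrm{ad}$, $P$ and $Q$. The substantive content — that (\ref{eq:dualadj3}) collapses to the much simpler identity (\ref{eq:eqdualadj1}) modulo (\ref{eq:dualadj1}) and (\ref{eq:dualadj2}) — has already been established in the computation carried out in the proof of Proposition~\ref{pro:equivalentconditions}(3), so nothing further needs to be verified by hand.
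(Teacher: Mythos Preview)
Your proposal is correct and follows essentially the same route as the paper: the paper's proof is a one-line reference to Proposition~\ref{pro:equivalentconditions} specialized to the adjoint representation $(\mathcal{L},\mathrm{ad},P,A)$ with $\beta=Q$, and you have simply unpacked that reference, making explicit the roles of parts (3) and (4) and of Corollary~\ref{cor:dualadj}.
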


\begin{proof}
It follows from Proposition~\ref{pro:equivalentconditions} when
the representation $(\mu,\rho,\alpha,V)$ is taken to be the
adjoint representation $(\mathcal{L},\mathrm{ad},P,A)$ of $(A,P)$.
\end{proof}

At the end of the subsection, we give a relationship between
representations of Jacobi algebras and representations of (unital)
relative Poisson algebras.

\begin{defi}\label{de:repJacobi} {\rm (\cite{Ago})}
Let $(A,\cdot,[-,-])$ be a Jacobi algebra. A triple $(\mu,\rho,V)$
is called a \textbf{representation} of $(A,\cdot,[-,-])$ if
$(\mu,V)$ is a representation of the unital commutative
associative algebra $(A,\cdot)$, $(\rho,V)$ is a representation of
the Lie algebra $(A,[-,-])$, and they satisfy the following
conditions:
\begin{equation}\label{eq:de:repJacobi2}
\rho(x\cdot y)v-\mu(x)\rho(y)v-\mu(y)\rho(x)v+\mu(x\cdot y)\rho(1_{A})v=0,
\end{equation}
\begin{equation}\label{eq:de:repJacobi3}
\rho(y)\mu(x)v-\mu(x)\rho(y)v+\mu([x,y])v-\mu(x\cdot[1_{A},y])v=0,
\end{equation}
for all $x,y\in A,v\in V$.
\end{defi}

The following conclusion is a direct consequence of
\cite[Theorem 3.2]{Ago}.

\begin{pro}\label{pro:semiJacobi} %{\rm (\cite{Ago})}
Suppose that $(A,\cdot,[-,-])$ is a Jacobi algebra, $V$ is a vector
space and $\mu,\rho:A\rightarrow\mathrm{End}(V)$ are linear maps.
Then $(\mu,\rho,V)$ is a representation of $(A,\cdot,[-,-])$ if
and only if there is a Jacobi algebra structure on the direct sum
$A\oplus V$ of vector spaces, where the bilinear operations $\cdot$ and
$[-,-]$ on $A\oplus V$ are given by Eqs.~(\ref{eq:SDASSO}) and
(\ref{eq:SDLie}) respectively. We denote this Jacobi algebra
structure on $A\oplus V$ by $A\ltimes_{\mu,\rho}V$.% or simply
%$A\ltimes V$.
\end{pro}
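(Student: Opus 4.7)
The plan is to reduce the Jacobi algebra axioms on $A\oplus V$ equipped with the operations (\ref{eq:SDASSO}) and (\ref{eq:SDLie}) to the two compatibility conditions (\ref{eq:de:repJacobi2}) and (\ref{eq:de:repJacobi3}). One could alternatively invoke \cite[Theorem 3.2]{Ago} directly, but it is instructive to check the equivalence by hand.

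First I would dispose of the two algebra axioms separately. By the standard semi-direct product construction for commutative associative algebras, $(A\oplus V,\cdot)$ is a unital commutative associative algebra with unit $1_A$ if and only if $(\mu,V)$ is a unital representation of $(A,\cdot)$; similarly, $(A\oplus V,[-,-])$ is a Lie algebra if and only if $(\rho,V)$ is a representation of $(A,[-,-])$. These facts are independent of the mutual relations between $\mu$ and $\rho$ and have already been recalled in Section~\ref{ss:2.1}.

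Granting these two structures, the only remaining condition is the Jacobi-algebra identity (\ref{eq:JA}) applied to three arbitrary elements $x+u$, $y+v$, $z+w\in A\oplus V$. I would expand both sides using (\ref{eq:SDASSO}) and (\ref{eq:SDLie}), cancel the purely $A$-valued portion by (\ref{eq:JA}) in $A$, and sort the remaining terms by which of $u$, $v$, $w$ they carry. Using $\mu(x\cdot y)=\mu(x)\mu(y)$ together with the commutativity identity $\mu([1_A,z])\mu(y)=\mu(y\cdot[1_A,z])$, the coefficient of $u$ and the coefficient of $v$ each reduce, after a relabeling of variables, to exactly (\ref{eq:de:repJacobi3}), while the coefficient of $w$ reduces to exactly (\ref{eq:de:repJacobi2}). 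Since $u,v,w$ are arbitrary vectors in $V$, vanishing of each coefficient is both necessary and sufficient, yielding the equivalence.

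The main obstacle is purely bookkeeping: the expansion of (\ref{eq:JA}) on $A\oplus V$ produces roughly a dozen mixed terms, and one must keep careful track of where each lands to see the two conditions of Definition~\ref{de:repJacobi} emerge cleanly. There is no conceptual difficulty; in fact, the calculation is parallel to the proof of Proposition~\ref{pro:semipro} for relative Poisson algebras. Using the one-to-one correspondence between Jacobi algebras and unital relative Poisson algebras, one can alternatively derive the present proposition as the specialization of Proposition~\ref{pro:semipro} with $P=\mathrm{ad}(1_A)$ and $\alpha=\rho(1_A)$, observing that under this identification Eqs.~(\ref{eq:CompStru}) and (\ref{eq:rep1})--(\ref{eq:rep3}) collapse precisely to (\ref{eq:de:repJacobi2}) and (\ref{eq:de:repJacobi3}).
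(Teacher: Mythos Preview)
Your proposal is correct. The paper itself gives no proof at all: it simply records the statement as ``a direct consequence of \cite[Theorem 3.2]{Ago}'' and moves on. Your direct expansion of the Jacobi identity (\ref{eq:JA}) on $A\oplus V$ and sorting by the $u,v,w$-coefficients is exactly the standard verification and is more informative than a bare citation.

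One remark on your alternative route via Proposition~\ref{pro:semipro}: that reduction is valid, but note that what you are doing there is precisely the content of Proposition~\ref{pro:repJacobi and GPA}, which in the paper is stated \emph{after} Proposition~\ref{pro:semiJacobi} and whose proof \emph{uses} Proposition~\ref{pro:semiJacobi}. Your argument avoids circularity because you go directly through Proposition~\ref{pro:semipro} (itself proved via matched pairs) and check by hand that, with $P=\mathrm{ad}(1_A)$ and $\alpha=\rho(1_A)$, Eqs.~(\ref{eq:rep1}) and (\ref{eq:rep2}) are consequences of (\ref{eq:de:repJacobi3}) and the Lie-representation axiom respectively, while (\ref{eq:CompStru}) and (\ref{eq:rep3}) coincide with (\ref{eq:de:repJacobi3}) and (\ref{eq:de:repJacobi2}). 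If you present this route, make that collapse explicit so the logical independence from Proposition~\ref{pro:repJacobi and GPA} is clear.
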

%\begin{proof}
%It is similar to the proof of Proposition \ref{pro:semipro}.
%\end{proof}

%CM: it is a result in \cite{Ago} or a new result?

%\begin{ex}
%Let $(A,\cdot,[-,-])$ be a Jacobi algebra. Then
%$(\mathcal{L},\mathrm{ad};A)$ is a representation of
%$(A,\cdot,[-,-])$, called the \textbf{adjoint representation} of
%$(A,\cdot,[-,-])$.
%\end{ex}

\begin{pro}\label{pro:repJacobi and GPA}
Let $(A,\cdot,[-,-])$ be a Jacobi algebra and $(A,\cdot,$
$[-,-],\mathrm{ad}(1_{A}))$ be the corresponding unital
relative Poisson algebra. Then $(\mu,\rho,V)$ is a
representation of the Jacobi algebra $(A,\cdot,[-,-])$ if and only
if $(\mu,\rho,\rho(1_{A}),V)$ is a representation of the unital
relative Poisson algebra $(A,\cdot,$
$[-,-],\mathrm{ad}(1_{A}))$.
\end{pro}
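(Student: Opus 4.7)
The plan is to observe that both representation notions require the same underlying pieces, namely a unital representation $(\mu,V)$ of the commutative associative algebra $(A,\cdot)$ together with a representation $(\rho,V)$ of the Lie algebra $(A,[-,-])$; so these shared ingredients match automatically. The only work is to compare the compatibility conditions under the substitution $P=\mathrm{ad}(1_A)$ and $\alpha=\rho(1_A)$. Concretely, I would check that the two Jacobi-representation conditions Eqs.~(\ref{eq:de:repJacobi2}) and (\ref{eq:de:repJacobi3}) correspond bijectively to Eqs.~(\ref{eq:CompStru}) and (\ref{eq:rep3}) of a relative Poisson representation, and that the remaining conditions Eqs.~(\ref{eq:rep1}) and (\ref{eq:rep2}) are automatic.

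For the forward direction, given a Jacobi representation $(\mu,\rho,V)$, I would verify each of the four defining conditions of Definition~\ref{de:rep} for $(\mu,\rho,\rho(1_A),V)$. First, Eq.~(\ref{eq:CompStru}) with $P(y)=[1_A,y]$ is literally Eq.~(\ref{eq:de:repJacobi3}), and Eq.~(\ref{eq:rep3}) with $\alpha=\rho(1_A)$ is literally Eq.~(\ref{eq:de:repJacobi2}); both hold by hypothesis. Next, Eq.~(\ref{eq:rep1}) is obtained by specializing Eq.~(\ref{eq:de:repJacobi3}) at $y=1_A$ and using $\mu(1_A)=\mathrm{id}_V$ together with $[1_A,1_A]=0$, which collapses Eq.~(\ref{eq:de:repJacobi3}) to $\rho(1_A)\mu(x)v-\mu(x)\rho(1_A)v-\mu([1_A,x])v=0$, i.e.\ exactly Eq.~(\ref{eq:rep1}) with $\alpha=\rho(1_A)$ and $P=\mathrm{ad}(1_A)$. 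Finally, Eq.~(\ref{eq:rep2}) reduces to $\rho(1_A)\rho(x)v-\rho([1_A,x])v-\rho(x)\rho(1_A)v=0$, which is just the statement $\rho([1_A,x])=[\rho(1_A),\rho(x)]$ and thus follows from $\rho$ being a Lie algebra representation.

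For the converse, given a representation $(\mu,\rho,\rho(1_A),V)$ of the unital relative Poisson algebra $(A,\cdot,[-,-],\mathrm{ad}(1_A))$, Eq.~(\ref{eq:de:repJacobi3}) is read off from Eq.~(\ref{eq:CompStru}) and Eq.~(\ref{eq:de:repJacobi2}) is read off from Eq.~(\ref{eq:rep3}) by the same substitutions in reverse. Combined with the fact that $(\mu,V)$ is a unital representation of $(A,\cdot)$ and $(\rho,V)$ is a representation of $(A,[-,-])$, this yields the defining conditions of Definition~\ref{de:repJacobi}.

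There is no real obstacle in this proof — it is entirely bookkeeping, and the key observation is that the extra datum $\alpha$ in a relative Poisson representation is forced to be $\rho(1_A)$ in the unital case (just as $P$ is forced to be $\mathrm{ad}(1_A)$ at the level of the algebra), so Eqs.~(\ref{eq:rep1}) and (\ref{eq:rep2}) impose no new constraints beyond what a Jacobi representation already guarantees.
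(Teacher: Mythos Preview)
Your proof is correct and proceeds by direct verification of the defining equations, which is a genuinely different route from the paper's argument. The paper instead exploits the semi-direct product characterizations: given a Jacobi representation $(\mu,\rho,V)$, Proposition~\ref{pro:semiJacobi} yields a Jacobi algebra structure on $A\ltimes_{\mu,\rho}V$ with unit $1_A$; computing $[1_A,x+u]=(\mathrm{ad}(1_A)+\rho(1_A))(x+u)$ shows this is a unital relative Poisson algebra with derivation $\mathrm{ad}(1_A)+\rho(1_A)$, and Proposition~\ref{pro:semipro} then reads off that $(\mu,\rho,\rho(1_A),V)$ is a relative Poisson representation. The converse is analogous.

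Your approach has the advantage of being self-contained and making the correspondence between the equation sets completely explicit --- in particular, your observation that Eq.~(\ref{eq:rep1}) is the $y=1_A$ specialization of Eq.~(\ref{eq:de:repJacobi3}), and that Eq.~(\ref{eq:rep2}) is just the Lie representation property applied to $1_A$, is transparent and requires no auxiliary constructions. The paper's approach is more conceptual: it avoids touching the individual compatibility equations at all by passing through the ``representations $\leftrightarrow$ semi-direct products'' dictionary on both sides, which scales better to similar statements elsewhere in the paper but obscures exactly which equations match which.
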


\begin{proof}
Suppose that $(\mu,\rho,V)$ is a representation of the Jacobi
algebra $(A,\cdot,[-,-])$. Then $A\ltimes_{\mu,\rho}V$ is also a
Jacobi algebra with the unit $1_{A}$. Since
$$[1_{A},x+u]=[1_{A},x]+\rho(1_{A})u=(\mathrm{ad}(1_{A})+\rho(1_{A}))(x+u),\;\;\forall x\in A, v\in V,$$
$(A\ltimes_{\mu,\rho}V,\mathrm{ad}(1_{A})+\rho(1_{A}))$ is a
unital relative Poisson algebra.  Hence by
Proposition~\ref{pro:semipro}, $(\mu,\rho$, $\rho(1_{A})$, $V)$ is
a representation of the unital relative Poisson algebra
$(A,\cdot,[-,-],\mathrm{ad}(1_{A}))$. Conversely, by a similar
proof, the conclusion is obtained.
\end{proof}

\begin{cor}\label{pro:dualrepJA}
Let $(\mu,\rho,V)$ be a representation of a Jacobi algebra
$(A,\cdot,[-,-])$. Then $(-\mu^{*},\rho^{*}$, $V^{*})$ is also a
representation of $(A,\cdot,[-,-])$.
\end{cor}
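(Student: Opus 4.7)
The plan is to bridge through the unital relative Poisson algebra side using Proposition~\ref{pro:repJacobi and GPA}, so that the result becomes a direct consequence of Corollary~\ref{cor:automap}. Concretely, starting from a representation $(\mu,\rho,V)$ of the Jacobi algebra $(A,\cdot,[-,-])$, I would first invoke Proposition~\ref{pro:repJacobi and GPA} to upgrade it to a representation $(\mu,\rho,\rho(1_A),V)$ of the associated unital relative Poisson algebra $(A,\cdot,[-,-],\mathrm{ad}(1_A))$.

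Next, I would apply Corollary~\ref{cor:automap} to this representation with $\alpha=\rho(1_A)$, which tells us automatically that $-\rho(1_A)$ dually represents $(A,\mathrm{ad}(1_A))$ on $(\mu,\rho,V)$; equivalently, $(-\mu^*,\rho^*,-\rho(1_A)^*,V^*)$ is a representation of the relative Poisson algebra, where $\rho(1_A)^*$ denotes the ordinary transpose of the endomorphism $\rho(1_A)$.

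The third step is to reconcile the two dualization conventions in the paper: for $\varphi:A\to\mathrm{End}(V)$ the map $\varphi^*$ is defined with a sign, $\langle\varphi^*(x)u^*,v\rangle=-\langle u^*,\varphi(x)v\rangle$, while for $T:V\to V$ the dual $T^*$ has no sign. A direct pairing computation gives $\rho^*(1_A)=-\rho(1_A)^*$, so the representation obtained in the previous step is exactly $(-\mu^*,\rho^*,\rho^*(1_A),V^*)$. Applying Proposition~\ref{pro:repJacobi and GPA} now in the opposite direction to the dual space (whose unit-induced derivation on the relative Poisson algebra side is $\mathrm{ad}(1_A)$ acting via $\rho^*(1_A)$) yields that $(-\mu^*,\rho^*,V^*)$ is a representation of the Jacobi algebra $(A,\cdot,[-,-])$, as desired.

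The only real obstacle is the sign-bookkeeping in the third step: one needs to be careful that the $\alpha$-component demanded by Proposition~\ref{pro:repJacobi and GPA} when reading the Jacobi representation on $V^*$ as a relative Poisson representation — namely, the action of $1_A$ under the Lie piece $\rho^*$ — matches precisely the $-\rho(1_A)^*$ produced by Corollary~\ref{cor:automap}. Once that compatibility is checked, the proof is an immediate two-line application of the two cited results, and no further verification of the Jacobi compatibility identities \eqref{eq:de:repJacobi2}--\eqref{eq:de:repJacobi3} on the dual is needed.
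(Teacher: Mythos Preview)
Your proposal is correct and follows essentially the same approach as the paper's own proof: both pass through Proposition~\ref{pro:repJacobi and GPA} to the relative Poisson side, apply Corollary~\ref{cor:automap} with $\alpha=\rho(1_A)$, verify the identity $\rho^*(1_A)=-(\rho(1_A))^*$ to match the $\alpha$-component, and then invoke Proposition~\ref{pro:repJacobi and GPA} in the reverse direction. The only cosmetic difference is that the paper records the sign identity first, whereas you place it third.
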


\begin{proof} Since
$$\langle \rho^*(1_A)u^*, v\rangle=\langle
u^*,-\rho(1_A)v\rangle=\langle -(\rho(1_A))^*u^*,v\rangle,
\;\;\forall u^*\in V^*, v\in V,$$ we have
$\rho^*(1_A)=-(\rho(1_A))^*$. If  $(\mu,\rho,V)$ is a
representation of a Jacobi algebra $(A,\cdot,[-,-])$, then by
Proposition~\ref{pro:repJacobi and GPA},
$(\mu,\rho,\rho(1_{A}),V)$ is a representation of the unital
relative Poisson algebra $(A,\cdot,$
$[-,-],\mathrm{ad}(1_{A}))$. By Corollary~\ref{cor:automap},
$(-\mu^*,\rho^*,-(\rho(1_{A}))^*=\rho^*(1_A),V^*)$ is a
representation of the unital relative Poisson algebra
$(A,\cdot,$ $[-,-],\mathrm{ad}(1_{A}))$. By
Proposition~\ref{pro:repJacobi and GPA} again,
$(-\mu^{*},\rho^{*},V^{*})$ is  a representation of the Jacobi
algebra $(A,\cdot,[-,-])$.
\end{proof}

\begin{rmk} Note that for a representation $(\mu,\rho,V)$ of a Jacobi algebra $(A,\cdot,[-,-])$,  $(-\mu^{*},\rho^{*}$, $V^{*})$ is automatically a representation
of $(A,\cdot,[-,-])$ without any additional condition, whereas in
\cite{Ago}, there is the following additional equation for
$(-\mu^{*},\rho^{*},V^{*})$ to be a representation of the Jacobi
algebra $(A,\cdot,[-,-])$:
\begin{equation}\label{pr6}
-\rho(x\cdot
y)v+\rho(y)\mu(x)v+\rho(x)\mu(y)v-\rho(1_{A})\mu(x\cdot
y)v=0,\;\;\forall x,y\in A, v\in V.
\end{equation}
In fact, Eq.~(\ref{pr6}) is not needed, that is, by a direct
proof, Eq.~(\ref{pr6}) can be obtained by Eqs.
(\ref{eq:de:repJacobi2}) and (\ref{eq:de:repJacobi3}).
\end{rmk}

%Actually, we have another way to get Proposition \ref{pro:dualrepJA} by analysing the relationship between representations of Jacobi algebras and representations of relative Poisson algebras as follows.

\subsection{Matched pairs of relative Poisson algebras}\
\label{ss:2.2}

We first recall the notions of matched pairs of commutative associative algebras
(\cite{Bai2010}) and Lie algebras (\cite{Maj}) respectively.

Let $(A_{1},\cdot_{1})$ and $(A_{2},\cdot_{2})$ be two commutative
associative algebras, and $(\mu_{1},A_{2})$ and $(\mu_{2},A_{1})$
be representations of $(A_{1},\cdot_{1})$ and $(A_{2},\cdot_{2})$
respectively. If the following equations are satisfied:
$$\mu_{1}(x)(a\cdot_{2} b)=(\mu_{1}(x)a)\cdot_{2} b+\mu_{1}(\mu_{2}(a)x)b,$$
$$\mu_{2}(a)(x\cdot_{1} y)=(\mu_{2}(a)x)\cdot_{1} y+\mu_{2}(\mu_{1}(x)a)y,$$
 for all $x,y\in A_{1},a,b\in A_{2}$, then $(A_{1},A_{2},\mu_{1},\mu_{2})$ is called a \textbf{matched
pair of commutative associative algebras}. In this case, there
exists a commutative associative algebra structure on the vector
space  $A_{1}\oplus A_{2}$  given by
\begin{equation}\label{eq:Asso} (x+a)\cdot (y+b)=x\cdot_{1}
y+\mu_{2}(a)y+\mu_{2}(b)x+a\cdot_{2}
b+\mu_{1}(x)b+\mu_{1}(y)a,\;\;\forall x,y\in A_1, a,b\in A_2.
\end{equation}
Moreover, every commutative associative algebra which is the
direct sum of the underlying vector spaces of two subalgebras can
be obtained from a matched pair of commutative associative
algebras.

Let $(A_{1},[-,-]_{1})$ and $(A_{2},[-,-]_{2})$ be two Lie
algebras, and $(\rho_{1},A_{2})$ and $(\rho_{2},A_{1})$ be
representations of $(A_{1},[-,-]_{1})$ and $(A_{2},[-,-]_{2})$
respectively. If the following equations are satisfied:
$$\rho_{1}(x)[a,b]_{2}-[\rho_{1}(x)a,b]_{2}-[a,\rho_{1}(x)b]_{2}+\rho_{1}(\rho_{2}(a)x)b-\rho_{1}(\rho_{2}(b)x)a=0,$$
$$\rho_{2}(a)[x,y]_{1}-[\rho_{2}(a)x,y]_{1}-[x,\rho_{2}(a)y]_{1}+\rho_{2}(\rho_{1}(x)a)y-\rho_{2}(\rho_{1}(y)a)x=0,$$
for all $x,y\in A_{1},a,b\in A_{2}$, then
$(A_{1},A_{2},\rho_{1},\rho_{2})$ is called a \textbf{matched pair
of Lie algebras}. In this case, there is a Lie algebra structure
on the vector space $A_{1}\oplus A_{2}$ given by
\begin{equation}\label{eq:Lie}
[x+a,y+b]=[x,y]_{1}+\rho_{2}(a)y-\rho_{2}(b)x+[a,b]_{2}+\rho_{1}(x)b-\rho_{1}(y)a,\;\;\forall
x,y\in A_1, a,b\in A_2.
\end{equation}
Moreover, every Lie algebra which is the direct sum of the
underlying vector spaces of two subalgebras can be obtained from a
matched pair of Lie algebras.

Now we consider the case of relative Poisson algebras.
\begin{defi}
Let $(A_{1},\cdot_{1},[-,-]_{1},P_{1})$ and
$(A_{2},\cdot_{2},[-,-]_{2},P_{2})$ be two relative Poisson
algebras. Suppose that $(\mu_{1},\rho_{1},P_{2},A_{2})$ is a
representation of $(A_{1},\cdot_{1},[-,-]_{1},P_{1})$ and
$(\mu_{2},\rho_{2},P_{1},A_{1})$ is a representation of
$(A_{2},\cdot_{2},[-,-]_{2},P_{2})$, such that
$(A_{1},A_{2},\mu_{1},\mu_{2})$ is a matched pair of commutative
associative algebras and $(A_{1},A_{2},\rho_{1},\rho_{2})$ is a
matched pair of Lie algebras. Suppose that the following
compatible conditions are satisfied:
\begin{equation}\label{eq:MP1}
\rho_{2}(a)(x\cdot_{1} y)+\mu_{2}(\rho_{1}(y)a)x-x\cdot_{1}\rho_{2}(a)y+\mu_{2}(\rho_{1}(x)a)y-y\cdot_{1}\rho_{2}(a)x-\mu_{2}(P_{2}(a))(x\cdot_{1} y)=0,
\end{equation}
\begin{equation}\label{eq:MP2}
\rho_{1}(x)(a\cdot_{2} b)+\mu_{1}(\rho_{2}(b)x)a-a\cdot_{2}\rho_{1}(x)b+\mu_{1}(\rho_{2}(a)x)b-b\cdot_{2}\rho_{1}(x)a-\mu_{1}(P_{1}(x))(a\cdot_{2} b)=0,
\end{equation}
\begin{equation}\label{eq:MP3}
\rho_{2}(\mu_{1}(x)a)y+[\mu_{2}(a)x,y]_{1}-x\cdot_{1}\rho_{2}(a)y+\mu_{2}(\rho_{1}(y)a)x-\mu_{2}(a)([x,y]_{1})+\mu_{2}(a)(x\cdot_{1} P_{1}(y))=0,
\end{equation}
\begin{equation}\label{eq:MP4}
\rho_{1}(\mu_{2}(a)x)b+[\mu_{1}(x)a,b]_{2}-a\cdot_{2}\rho_{1}(x)b+\mu_{1}(\rho_{2}(b)x)a-\mu_{1}(x)([a,b]_{2})+\mu_{1}(x)(a\cdot_{2}
P_{2}(b))=0,
\end{equation}
 for all $x,y\in A_{1},a,b\in
A_{2}$. Such a structure is called  a \textbf{matched pair of
relative Poisson algebras} $(A_{1},P_{1})$ and $(A_{2},P_{2})$.
We denote it by
$((A_{1},P_{1}),(A_{2},P_{2}),\mu_{1},\rho_{1},\mu_{2},\rho_{2})$.
\end{defi}

\begin{pro}\label{pro:matched pairs}
Suppose that $(A_{1},P_{1})$ and $(A_{2},P_{2})$ are relative
Poisson algebras. For linear maps
$\mu_{1},\rho_{1}:A_{1}\rightarrow\mathrm{End}(A_{2})$ and
$\mu_{2},\rho_{2}:A_{2}\rightarrow\mathrm{End}(A_{1})$, define two bilinear
operations  $\cdot$ and $[-,-]$ on $A_{1}\oplus A_{2}$ by
Eqs.~(\ref{eq:Asso}) and (\ref{eq:Lie}) respectively. Then
$(A_{1}\oplus A_{2}, \cdot, [-,-], P_{1}+P_{2})$ is a relative
Poisson algebra if and only if
$((A_{1},P_{1}),(A_{2},P_{2}),\mu_{1},\rho_{1},\mu_{2},\rho_{2})$
is a matched pair of relative Poisson algebras. In this case,
we denote this relative Poisson algebra by $(A_1\bowtie A_2,
P_1+P_2)$. Moreover, every relative Poisson algebra which is
the direct sum of the underlying vector spaces of two subalgebras
can be obtained from a matched pair of relative Poisson
algebras.
\end{pro}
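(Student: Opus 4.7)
The plan is to organize the verification according to the direct-sum decomposition. First, by the recalled matched pair theories for commutative associative algebras and for Lie algebras, the bilinear operations (\ref{eq:Asso}) and (\ref{eq:Lie}) make $A_1\oplus A_2$ into a commutative associative algebra and a Lie algebra, respectively, exactly when $(A_1,A_2,\mu_1,\mu_2)$ and $(A_1,A_2,\rho_1,\rho_2)$ are matched pairs of the respective types; so those two layers of structure are immediately accounted for. It therefore remains to characterize when $P_1+P_2$ is a derivation of both operations and when the relative Leibniz rule (\ref{eq:GLR}) holds on $A_1\oplus A_2$.

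The derivation conditions reduce cleanly: expanding $(P_1+P_2)((x+a)\cdot(y+b))$ and comparing with $(P_1+P_2)(x+a)\cdot(y+b)+(x+a)\cdot(P_1+P_2)(y+b)$ using (\ref{eq:Asso}), the pure $A_i$-components vanish because $P_i$ is a derivation of $\cdot_i$, while the cross terms become precisely Eq.~(\ref{eq:rep1}) applied to the two representations $(\mu_1,\rho_1,P_2,A_2)$ and $(\mu_2,\rho_2,P_1,A_1)$. The analogous computation with $[-,-]$ via (\ref{eq:Lie}) yields Eq.~(\ref{eq:rep2}) for the same two representations. Both sets of identities hold under the hypothesis that each $(\mu_i,\rho_i,P_{3-i},A_{3-i})$ is a representation of $(A_i,P_i)$, so the two derivation axioms are automatic once the matched pair data are in place.

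The main step is the relative Leibniz rule. Writing $x=x_1+a$, $y=y_1+b$, $z=z_1+c$ and expanding $[z,x\cdot y]-[z,x]\cdot y-x\cdot[z,y]-x\cdot y\cdot(P_1+P_2)(z)$ via (\ref{eq:Asso}) and (\ref{eq:Lie}), one sorts the terms by how many of the three arguments come from each $A_i$ and then projects onto the $A_1$- and $A_2$-components. The all-$A_1$ and all-$A_2$ cases reproduce the relative Leibniz rules for $(A_i,P_i)$. Each remaining mixed case (two arguments from one factor and one from the other, further split by which argument is singled out) splits, under projection, into two identities: one coincides with one of (\ref{eq:MP1})--(\ref{eq:MP4}), and the other reduces, after using commutativity of the associative products, to either the compatibility identity (\ref{eq:CompStru}) or the identity (\ref{eq:rep3}) for the appropriate representation. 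The main obstacle is this bookkeeping: one must verify that every listed condition corresponds to a distinct nonempty block of terms in the expansion, with no residue left over. Once the forward accounting is complete, the reverse direction is obtained by reading off the vanishing of each block as a necessary condition.

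For the ``moreover'' assertion, suppose $(A,\cdot,[-,-],P)$ is a relative Poisson algebra with a vector space decomposition $A=A_1\oplus A_2$ into relative Poisson subalgebras, so that $P_i:=P|_{A_i}$ is a derivation of $A_i$. For $x\in A_1$ and $a\in A_2$, decompose $x\cdot a=\mu_2(a)x+\mu_1(x)a$ and $[x,a]=-\rho_2(a)x+\rho_1(x)a$ according to the direct sum. The first part of the proposition, read in reverse, then forces $((A_1,P_1),(A_2,P_2),\mu_1,\rho_1,\mu_2,\rho_2)$ to be a matched pair of relative Poisson algebras whose associated direct-sum relative Poisson algebra $(A_1\bowtie A_2,P_1+P_2)$ is exactly $(A,P)$.
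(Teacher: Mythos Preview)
Your proposal is correct and follows essentially the same approach as the paper's proof. The paper is simply more explicit about the bookkeeping step: it isolates each mixed case of the relative Leibniz rule by setting specific variables to zero (e.g., $a=b=z=0$, $x=y=c=0$, $a=y=c=0$, $a=y=z=0$) to extract intermediate equations, then expands each one and checks that its $A_1$- and $A_2$-components coincide with the corresponding (\ref{eq:MP1})--(\ref{eq:MP4}) and with (\ref{eq:CompStru}) or (\ref{eq:rep3}), exactly as you outline.
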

\begin{proof}
It is known that $(A_1\oplus A_2,\cdot)$ is a commutative
associative algebra if and only if $(A_{1},A_{2},\mu_{1},\mu_{2})$
is a matched pair of commutative associative algebras and
$(A_1\oplus A_2, [-,-])$ is a Lie algebra if and only if
$(A_{1},A_{2},\rho_{1},\rho_{2})$ is a matched pair of Lie
algebras.

Let $x,y\in A_{1}, a,b\in A_{2}$. By Eq.~(\ref{eq:Lie}), we have
\begin{eqnarray*}
&&(P_{1}+P_{2})([x+a,y+b])
=P_{1}([x,y]_{1}+\rho_{2}(a)y-\rho_{2}(b)x)+P_{2}([a,b]_{2}+\rho_{1}(x)b-\rho_{1}(y)a),\\
&&[(P_{1}+P_{2})(x+a),y+b]=
[P_{1}(x),y]_1+\rho_{2}(P_{2}(a)y)-\rho_{2}(b)P_{1}(x)+[P_{2}(a),b]_2+\rho_{1}(P_{1}(x))b\\&&\mbox{}\hspace{4.8cm}-\rho_{1}(y)P_{2}(a),\\
&&[x+a,(P_{1}+P_{2})(y+b)]
=[x,P_{1}(y)]_1+\rho_{2}(a)P_{1}(y)-\rho_{2}(P_{2}(b))x+[a,P_{2}(b)]_2+\rho_{1}(x)P_{2}(b)\\&&\mbox{}\hspace{4.8cm}-\rho_{1}(P_{1}(y))a.
\end{eqnarray*}
%for all $x,y\in A_{1}, a,b\in A_{2}$.
If $P_1+P_2$ is a derivation of the Lie algebra $(A_1\oplus A_2,
[-,-])$, then $P_1$ and $P_2$ are derivations of the Lie algebras
$(A_1,[-,-]_1)$ and $(A_2,[-,-]_2)$ by taking $a=b=0$ and $x=y=0$
respectively. Moreover, Eq.~(\ref{eq:rep2}) holds for
$(\mu_{1},\rho_{1},P_{2},A_{2})$ and
$(\mu_{2},\rho_{2},P_{1},A_{1})$ as the representations of
$(A_1,P_1)$ and $(A_2,P_2)$  by taking $a=y=0$ and
$x=b=0$ respectively. Conversely, if $P_1$ and $P_2$ are derivations of the Lie
algebras $(A_1,[-,-]_1)$ and $(A_2,[-,-])$ respectively and
$(\mu_{1},\rho_{1},P_{2},A_{2})$ and
$(\mu_{2},\rho_{2},P_{1},A_{1})$ are representations of
$(A_1,P_1)$ and $(A_2,P_2)$ respectively, then by
Eq.~(\ref{eq:rep2}), $P_{1}+P_{2}$ is a derivation of the Lie
algebra $(A_1\oplus A_2, [-,-])$. Similarly, $P_{1}+P_{2}$ is a
derivation of the commutative associative algebra $(A_1\oplus A_2,
\cdot)$ if and only if $P_1$ and $P_2$ are derivations of the
commutative associative  algebras $(A_1,\cdot_1)$ and
$(A_2,\cdot_2)$ respectively and Eq.~(\ref{eq:rep1}) holds for
$(\mu_{1},\rho_{1},P_{2},A_{2})$ and
$(\mu_{2},\rho_{2},P_{1},A_{1})$ as the representations of
$(A_1,P_1)$ and $(A_2,P_2)$ respectively.

Let $x,y,z\in A_1, a,b,c\in A_2$. We consider the relative
Leibniz rule~(\ref{eq:GLR}) on $A_{1}\oplus A_{2}$:
\begin{equation}\label{eq:GLR3}
[z+c,(x+a)\cdot(y+b)]=(x+a)\cdot[z+c,y+b]+[z+c,x+a]\cdot(y+b)+(x+a)\cdot(y+b)\cdot(P_{1}+P_{2})(z+c).
\end{equation}
If Eq.~(\ref{eq:GLR3}) holds, then Eq.~(\ref{eq:GLR}) holds for
$(A_1,P_1)$ and $(A_2,P_2)$ as relative Poisson algebras and
the following equations hold:
\begin{equation}\label{eq:DS1}
[c,x\cdot_{1}y]=x\cdot[c,y]+[c,x]\cdot y+(x\cdot_{1}y)\cdot
P_{2}(c),
\end{equation}
\begin{equation}\label{eq:DS2}
[z,a\cdot_{2}b]=a\cdot[z,b]+[z,a]\cdot b+(a\cdot_{2}b)\cdot
P_{1}(z),
\end{equation}
\begin{equation}\label{eq:DS3}
[z,x\cdot b]=x\cdot[z,b]+[z,x]_{1}\cdot b+b\cdot(x\cdot_{1}
P_{1}(z)),
\end{equation}
\begin{equation}\label{eq:DS4}
[c,x\cdot b]=b\cdot[c,x]+[c,b]_{2}\cdot
x+x\cdot(b\cdot_{2}P_{2}(c)).
\end{equation}
by taking $a=b=c=0$, $x=y=z=0$, $a=b=z=0,x=y=c=0,a=y=c=0,a=y=z=0$
respectively in Eq.~(\ref{eq:GLR3}). Conversely, since
$(A_{1},P_{1})$ and $(A_{2},P_{2})$ are relative Poisson
algebras, if Eqs.~(\ref{eq:DS1})-(\ref{eq:DS4}) hold, then it is
straightforward to show that Eq.~(\ref{eq:GLR3}) holds on
$A_{1}\oplus A_{2}$. Moreover, we have
\begin{eqnarray*}
&&[c,x\cdot_{1}y]=\rho_{2}(c)(x\cdot_{1}y)-\rho_{1}(x\cdot_{1}y)c,\\
&&x\cdot[c,y]=x\cdot(\rho_{2}(c)y-\rho_{1}(y)c)=x\cdot_{1}\rho_{2}(c)y-\mu_{1}(x)\rho_{1}(y)c-\mu_{2}(\rho_{1}(y)c)x,\\
&&[c,x]\cdot y=(\rho_{2}(c)x-\rho_{1}(x)c)\cdot
y=\rho_{2}(c)x\cdot_{1}y-\mu_{1}(y)\rho_{1}(x)c-\mu_{2}(\rho_{1}(x)c)y,\\
&&(x\cdot_{1}y)\cdot
P_{2}(c)=\mu_{1}(x\cdot_{1}y)P_{2}(c)+\mu_{2}(P_{2}(c))(x\cdot_{1}y).
\end{eqnarray*}
Therefore Eq.~(\ref{eq:DS1}) holds if and only if
Eq.~(\ref{eq:MP1}) by replacing $a$ by $c$, and
Eq.~(\ref{eq:rep3}) for $(\mu_1,\rho_1,P_2,A_2)$ as a
representation of $(A_1,P_1)$ hold. Similarly, we have
\begin{enumerate}
\item[(1)] Eq.~(\ref{eq:DS2}) holds if and only if
Eq.~(\ref{eq:MP2}) by replacing $x$ by $z$, and
Eq.~(\ref{eq:rep3}) for $(\mu_2,\rho_2,P_1,A_1)$ as a
representation of $(A_2,P_2)$ hold; \item[(2)] Eq.~(\ref{eq:DS3})
holds if and only if Eq.~(\ref{eq:MP3}) by replacing $a$ by $b$,
$y$ by $z$, and Eq.~(\ref{eq:CompStru}) for
$(\mu_1,\rho_1,P_2,A_2)$ as a representation of $(A_1,P_1)$ hold;
\item[(3)] Eq.~(\ref{eq:DS4}) holds if and only if
Eq.~(\ref{eq:MP4}) by replacing $a$ by $b$, $b$ by $c$, and
Eq.~(\ref{eq:CompStru}) for $(\mu_2,\rho_2,P_1,A_1)$ as a
representation of $(A_2,P_2)$ hold.
\end{enumerate}
Hence the conclusion holds.
%Thus $(A_{1}\oplus A_{2}, \cdot, [-,-],
%P_{1}+P_{2})$ is a relative Poisson algebra if and only if
%$((A_{1},P_{1}),(A_{2},P_{2}),\mu_{1},\rho_{1},\mu_{2},\rho_{2})$
%is a matched pair of relative Poisson algebras.
\end{proof}

\begin{ex}
    Let $(A_{1},\cdot_{1})$ and $(A_{2},\cdot_{2})$ be two commutative associative
    algebras, and $P_{1}$ and $P_{2}$ be their derivations
    respectively. Let $(A_{1},[-,-]_{1})$ and $(A_{2}$,
    $[-,-]_{2})$ be the Lie algebras defined by
    Eq.~(\ref{eq:EXGPA}) respectively. Hence
    $(A_1,\cdot_1,[-,-]_1,P_1)$ and $(A_2,\cdot_2,[-,-]_2,P_2)$
    are relative Poisson algebras. Suppose that there are linear maps
$\mu_{1}:A_{1}\rightarrow\mathrm{End}(A_{2})$ and
$\mu_{2}:A_{2}\rightarrow\mathrm{End}(A_{1})$, such that
$(A_{1},A_{2},\mu_{1},\mu_{2})$ is a matched pair of commutative
associative algebras and the following conditions hold:
    \begin{equation}
        P_{2}(\mu_{1}(x)a)-\mu_{1}(P_{1}(x))a-\mu_{1}(x)P_{2}(a)=0,
    \end{equation}
\begin{equation}
    P_{1}(\mu_{2}(a)x)-\mu_{2}(P_{2}(a))x-\mu_{2}(a)P_{1}(x)=0,
\end{equation}
for all $x\in A_{1},a\in A_{2}$. Then $P_{1}+P_{2}$ is a
derivation of the resulting commutative associative algebra
$(A_{1}\oplus A_{2},\cdot)$ defined by Eq.~(\ref{eq:Asso}).
Moreover, there is a Lie bracket $[-,-]$ on $A_{1}\oplus A_{2}$
given by
\begin{eqnarray*}
    &&[x+a,y+b]=(x+a)\cdot(P_{1}(y)+P_{2}(b))-(P_{1}(x)+P_{2}(a))\cdot(y+b)\\
    &&=x\cdot_{1} P_{1}(y)+\mu_{2}(a)P_{1}(y)+\mu_{2}(P_{2}(b))x+a\cdot_{2} P_{2}(b)+\mu_{1}(x)P_{2}(b)+\mu_{1}(P_{1}(y))a\\
    &&\ \ -(P_{1}(x)\cdot y+\mu_{2}(P_{2}(a))y+\mu_{2}(b)P_{1}(x)+P_{2}(a)\cdot_{2}b+\mu_{1}(P_{1}(x))b+\mu_{1}(y)P_{2}(a) )\\
    &&=[x,y]_{1}+\rho_{2}(a)y-\rho_{2}(b)x+[a,b]_{2}+\rho_{1}(x)b-\rho_{1}(y)a,
\end{eqnarray*}
for all $x,y\in A_{1}, a,b\in A_{2}$.
Here $\rho_{1}:A_{1}\rightarrow\mathrm{End}(A_{2})$ and $\rho_{2}:A_{2}\rightarrow\mathrm{End}(A_{1})$ are linear maps  given by
\begin{equation*}
    \rho_{1}(x)a=\mu_{1}(x)P_{2}(a)-\mu_{1}(P_{1}(x))a,\;\;\rho_{2}(a)x=\mu_{2}(a)P_{1}(x)-\mu_{2}(P_{2}(a))x,\;\;\forall x\in A_{1}, a\in A_{2}.
\end{equation*}
Then by Example \ref{ex:GPA}, $(A_{1}\oplus
A_{2},\cdot,[-,-],P_{1}+P_{2})$ is a relative Poisson algebra.
Hence by Proposition \ref{pro:matched pairs},
$((A_{1},P_{1}),(A_{2},P_{2}),\mu_{1},\rho_{1},\mu_{2},\rho_{2})$
is a matched pair of relative Poisson algebras.
\end{ex}

\begin{rmk}\label{rmk:matchedpairJacobi}
We would like to point out that there is not a ``matched pair
theory" for Jacobi algebras or unital commutative  associative
algebras due to the appearance of the units. In fact, if  a unital
commutative associative algebra $(A\oplus B,1_{A\oplus B})$ is
decomposed into the direct sum of the underlying spaces of two unital
commutative associative algebras $(A,1_A)$ and $(B,1_B)$, then
there are representations $(\mu_B, A)$ and $(\mu_A,B)$ of the
commutative associative algebras $B$ and $A$ respectively such
that Eq.~(\ref{eq:Asso}) gives the commutative associative algebra
structure on $A\oplus B$ due to the matched pairs of commutative
associative algebras. Suppose that $1_{A\oplus B}=a+b$, where
$a\in A, b\in B$. Then
$$1_A=1_A\cdot 1_{A\oplus B}=1_A\cdot(a+b)=a+\mu_A(1_A)b+\mu_B(b)1_A=a+b+\mu_B(b) 1_A.$$
Therefore $b=0$ and $a=1_A$. Thus $1_{A\oplus B}=1_A$. Similarly,
$1_{A\oplus B}=1_B$. Hence $1_{A\oplus B}\in A\cap B=\lbrace
0\rbrace$, which is a contradiction.
\end{rmk}

\section{Relative Poisson bialgebras}
We introduce the notions of Manin triples of relative Poisson
algebras and relative Poisson bialgebras. The equivalence between
them is interpreted in terms of certain matched pairs of relative
Poisson algebras.

\subsection{Frobenius relative Poisson algebras and Manin triples of relative Poisson algebras}\

We generalize the notion of Frobenius Jacobi algebras (\cite{Ago})
to the following notion of Frobenius relative Poisson algebras.

\begin{defi}
A bilinear form $\mathcal{B}$ on a relative Poisson algebra
$(A, P)$ is called \textbf{invariant} if it satisfies the following
equations:
\begin{equation}\label{eq:Fro}
\mathcal{B}(x\cdot y,z)=\mathcal{B}(x,y\cdot z),
\end{equation}
\begin{equation}\label{eq:QuadLie}
\mathcal{B}([x,y],z)=\mathcal{B}(x,[y,z]),
\end{equation}
for all $x,y,z\in A$. A relative Poisson algebra $(A,P)$ is
called \textbf{Frobenius} if there is a nondegenerate invariant
  bilinear form $\mathcal{B}$ on $(A,P)$, which is denoted
 by $(A,P,\mathcal{B})$.
\end{defi}

\delete{
\textcolor{blue}{
The following proposition shows that (Frobenius) relative Poisson algebras are closed under tensor products, which generalizes the fact for Jacobi algebras in
\cite{Ago}.
\begin{pro}
    Let $(A_{1},\cdot_{1},[-,-]_{1},P_{1})$ and $(A_{2},\cdot_{2},[-,-]_{2},P_{2})$ be relative Poisson algebras. Then there is a relative Poisson algebra $(A_{1}\otimes A_{2},\cdot,[-,-],P_{1}\otimes P_{2})$, where
    \begin{eqnarray*}
        &&(x\otimes a)\cdot(y\otimes b)=x\cdot_{1}y\otimes a\cdot_{2}b,\\
        &&[x\otimes a,y\otimes b]=x\cdot_{1}y\otimes[a,b]_{2}+[x,y]_{1}\otimes a\cdot_{2}b,\\
        &&(P_{1}\otimes P_{2})x\otimes a=P_{1}(x)\otimes a+x\otimes P_{2}(a),
    \end{eqnarray*}
for all $x,y\in A_{1},a,b\in A_{2}$. Moreover, if there are bilinear forms $\mathcal{B}_{1}$ and $\mathcal{B}_{2}$ such that $(A_{1},P_{1},\mathcal{B}_{1})$ and $(A_{2},P_{2},\mathcal{B}_{2})$ are Frobenius, then $(A_{1}\otimes A_{2},P_{1}\otimes P_{2},\mathcal{B}_{1}\otimes\mathcal{B}_{2})$ is also Frobenius, where
\begin{equation*}
    \mathcal{B}_{1}\otimes\mathcal{B}_{2}(x\otimes a,y\otimes b)=\mathcal{B}_{1}(x,y)\mathcal{B}_{2}(a,b),\;\;\forall x,y\in A_{1}, a,b\in A_{2}.
\end{equation*}
\end{pro}
\begin{proof}
    It is known that $(A_{1}\otimes A_{2},\cdot)$ is a commutative associative algebra\underline{, and}\textcolor{blue}{(and)} $(A_{1}\otimes A_{2},[-,-])$ is a Lie algebra. Moreover, we have
    \begin{eqnarray*}
        (P_{1}\otimes P_{2})(x\otimes a\cdot y\otimes b)&=&(P_{1}\otimes P_{2})(x\cdot_{1}y\otimes a\cdot_{2}b)\\
        &=&P_{1}(x\cdot_{1}y)\otimes a\cdot_{2}b+x\cdot_{1}y\otimes P_{2}(a\cdot_{2}b)\\
        &=&P_{1}(x)\cdot_{1}y\otimes a\cdot_{2}b+x\cdot_{1}P_{1}(y)\otimes a\cdot_{2}b\\
        &&+x\cdot_{1}y\otimes P_{2}(a)\cdot_{2}b+x\cdot_{1}y\otimes a\cdot_{2}P_{2}(b)\\
        &=&(P_{1}\otimes P_{2})x\otimes a\cdot y\otimes b+x\otimes a\cdot(P_{1}\otimes P_{2})y\otimes b.
    \end{eqnarray*}
Thus $P_{1}\otimes P_{2}$ is a derivation of $(A_{1}\otimes A_{2},\cdot)$\underline{, and}\textcolor{blue}{(and)} similarly $P_{1}\otimes P_{2}$ is also a derivation of $(A_{1}\otimes A_{2},[-,-])$. Moreover, we have
\begin{eqnarray*}
&&[z\otimes c,(x\otimes a)\cdot(y\otimes b)]=x\cdot_{1}y\cdot_{1}z\otimes[c,a\cdot_{2}b]_{2}+[z,x\cdot_{1}y]_{1}\otimes a\cdot_{2}b\cdot_{2}c,\\
&&[z\otimes c,x\otimes a]\cdot(y\otimes b)=x\cdot_{1}y\cdot_{1}z\otimes[c,a]_{2}\cdot_{2}b+[z,x]_{1}\cdot_{1}y\otimes a\cdot_{2}b\cdot_{2}c,\\
&&(x\otimes a)\cdot[z\otimes c,y\otimes b]=x\cdot_{1}y\cdot_{1}z\otimes a\cdot_{2}[c,b]_{2}+x\cdot_{1}[z,y]_{1}\otimes a\cdot_{2}b\cdot_{2}c,\\
&&(x\otimes a)\cdot(y\otimes b)\cdot(P_{1}\otimes P_{2})(z\otimes c)=x\cdot_{1}y\cdot_{1}z\otimes a\cdot_{2}b\cdot_{2}P_{2}(c)+x\cdot_{1}y\cdot_{1}P_{1}(z)\otimes a\cdot_{2}b\cdot_{2}c.
\end{eqnarray*}
Thus
\begin{small}
    \begin{equation*}
[z\otimes c,(x\otimes a)\cdot(y\otimes b)]=[z\otimes c,x\otimes a]\cdot(y\otimes b)+(x\otimes a)\cdot[z\otimes c,y\otimes b]+(x\otimes a)\cdot(y\otimes b)\cdot(P_{1}\otimes P_{2})(z\otimes c),
    \end{equation*}
\end{small}
for all $x,y,z\in A_{1},a,b,c\in A_{2}$, that is,  $(A_{1}\otimes A_{2},\cdot,[-,-],P_{1}\otimes P_{2})$ is a relative Poisson algebra. Moreover, if $(A_{1},P_{1},\mathcal{B}_{1})$ and $(A_{2},P_{2},\mathcal{B}_{2})$ are Frobenius, we have
\begin{eqnarray*}
    \mathcal{B}_{1}\otimes\mathcal{B}_{2}((x\otimes a)\cdot(y\otimes b),z\otimes c)&=&\mathcal{B}_{1}\otimes\mathcal{B}_{2}(x\cdot_{1}y\otimes a\cdot_{2}b,z\otimes c)\\
    &=&\mathcal{B}_{1}(x\cdot_{1}y,z)\mathcal{B}_{2}(a\cdot_{2}b,c)\\
    &=&\mathcal{B}_{1}(x,y\cdot_{1}z)\mathcal{B}_{2}(a,b\cdot_{2}c)\\
    &=&\mathcal{B}_{1}\otimes\mathcal{B}_{2}( x\otimes a , (y\otimes b)\cdot(z\otimes c)),
\end{eqnarray*}
and similarly
\begin{equation*}
    \mathcal{B}_{1}\otimes\mathcal{B}_{2}([x\otimes a,y\otimes b],z\otimes c)=\mathcal{B}_{1}\otimes\mathcal{B}_{2}(x\otimes a,[y\otimes b,z\otimes c]).
\end{equation*}
Thus $(A_{1}\otimes A_{2},P_{1}\otimes P_{2},\mathcal{B}_{1}\otimes\mathcal{B}_{2})$ is also Frobenius.
\end{proof}
}}

Let $\mathcal{B}$ be a nondegenerate bilinear form on a relative Poisson algebra $(A,P)$. Then there is a unique map $\hat{P}:A\rightarrow A$ given by
\begin{equation}\label{eq:adjmap}
\mathcal{B}(P(x),y)=\mathcal{B}(x,\hat{P}(y)), \;\;\forall x,y\in A,
\end{equation}
that is, $\hat{P}$ is the adjoint linear transformation of $P$ under the nondegenerate bilinear form $\mathcal{B}$.

We have the following characterization of Frobenius relative Poisson algebras.
\begin{pro}\label{pro:adjrep}
    Let $(A,P,\mathcal{B})$ be a Frobenius relative Poisson algebra. Let $\hat{P}$ be the adjoint map of $P$ with respect to $\mathcal{B}$. Then $\hat{P}$ dually represents $(A,P)$,
    that is, $(-\mathcal{L}^{*},\mathrm{ad}^{*},\hat{P}^{*},A^{*})$ is a representation of $(A,P)$. Furthermore, as representations of $(A,P)$,
    $(-\mathcal{L}^{*},\mathrm{ad}^{*},\hat{P}^{*},A^{*})$ and $(\mathcal{L},\mathrm{ad},P,A)$ are equivalent. Conversely, let $(A,P)$ be
    a relative Poisson algebra and $Q:A\rightarrow A$ be a linear map. If $(-\mathcal{L}^{*},\mathrm{ad}^{*},{Q}^{*},A^{*})$ is a representation which is equivalent to $(\mathcal{L},\mathrm{ad},P,A)$,
    then there exists a nondegenerate bilinear form $\mathcal{B}$ such that $(A,P,\mathcal{B})$ is a Frobenius relative Poisson algebra and $\hat{Q}=P$.
\end{pro}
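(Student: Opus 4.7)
The plan is to identify the nondegenerate bilinear form $\mathcal{B}$ with the induced linear isomorphism $\varphi:A\rightarrow A^{*}$ defined by $\langle\varphi(x),y\rangle=\mathcal{B}(x,y)$, and then to show that $\varphi$ is precisely the equivalence of representations required in each direction.

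For the forward direction, I would verify that $\varphi$ intertwines the adjoint representation $(\mathcal{L},\mathrm{ad},P,A)$ with $(-\mathcal{L}^{*},\mathrm{ad}^{*},\hat{P}^{*},A^{*})$ in the sense of Eq.~(\ref{eq:eqrep1}). The three intertwining identities unfold, respectively, into: (i) $\mathcal{B}(x\cdot y,z)=\mathcal{B}(y,x\cdot z)$, which follows from Eq.~(\ref{eq:Fro}) together with the commutativity of $\cdot$, the minus sign from the convention $\langle\varphi^{*}(x)u^{*},v\rangle=-\langle u^{*},\varphi(x)v\rangle$ being absorbed into $-\mathcal{L}^{*}$; (ii) the Lie invariance Eq.~(\ref{eq:QuadLie}); and (iii) the defining equation (\ref{eq:adjmap}) of $\hat{P}$, which directly gives $\mathcal{B}(P(y),z)=\mathcal{B}(y,\hat{P}(z))=\langle\hat{P}^{*}(\varphi(y)),z\rangle$. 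Since $\varphi$ is bijective (nondegeneracy of $\mathcal{B}$), it provides an equivalence between the two tuples as compatible structures on $(A,P)$, and because $(\mathcal{L},\mathrm{ad},P,A)$ is already a representation of $(A,P)$, the transported tuple $(-\mathcal{L}^{*},\mathrm{ad}^{*},\hat{P}^{*},A^{*})$ is also a representation, which is precisely the statement that $\hat{P}$ dually represents $(A,P)$.

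For the converse, given an equivalence $\varphi:A\rightarrow A^{*}$ from $(\mathcal{L},\mathrm{ad},P,A)$ to $(-\mathcal{L}^{*},\mathrm{ad}^{*},Q^{*},A^{*})$, I would define $\mathcal{B}(x,y):=\langle\varphi(x),y\rangle$. Nondegeneracy is immediate from the bijectivity of $\varphi$. Reversing the computations of items~(i) and (ii) above yields the invariance Eqs.~(\ref{eq:Fro}) and (\ref{eq:QuadLie}), hence $(A,P,\mathcal{B})$ is Frobenius. Unfolding item~(iii) with $Q$ in place of $\hat{P}$ gives $\mathcal{B}(P(x),y)=\mathcal{B}(x,Q(y))$; swapping arguments and using the symmetry of $\mathcal{B}$ (guaranteed in the Frobenius setting under consideration) yields $\mathcal{B}(Q(x),y)=\mathcal{B}(x,P(y))$, so comparison with the definition of $\hat{Q}$ gives $\hat{Q}=P$.

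The main obstacle is purely notational: one has to keep track of the minus sign in the dual-map convention so that it matches correctly against the invariance identities ($-\mathcal{L}^{*}$ picks up a sign, while $\mathrm{ad}^{*}$ and $\hat{P}^{*}$ do not), and then separately check that the equivalence of $(\mu,\rho,\alpha,V)$-data is preserved under $\varphi$. Once this bookkeeping is performed carefully, both directions reduce to elementary pairing computations, and the whole proof becomes a clean translation between the Frobenius datum $\mathcal{B}$ and the intertwiner $\varphi$.
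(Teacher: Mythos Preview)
Your approach matches the paper's: both hinge on the isomorphism $\varphi:A\to A^{*}$ induced by $\mathcal{B}$ and the verification that it intertwines the two tuples. The paper organizes the forward direction slightly differently, first checking Eqs.~(\ref{eq:dualadj1})--(\ref{eq:dualadj3}) directly from invariance and the derivation property of $P$ and only then exhibiting $\varphi$, whereas you establish the intertwining first and obtain the representation axioms for $(-\mathcal{L}^{*},\mathrm{ad}^{*},\hat{P}^{*},A^{*})$ by transport of structure along $\varphi$; your ordering is marginally more economical but not materially different. One caveat: the paper's definition of a Frobenius relative Poisson algebra does not assume $\mathcal{B}$ is symmetric, so your step ``using the symmetry of $\mathcal{B}$'' to pass from $\hat{P}=Q$ to $\hat{Q}=P$ is not justified by the stated hypotheses alone---though the paper's own converse argument is equally brief on this point.
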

\begin{proof}
    Let $x,y,z,w\in A$. Since $P$ is a derivation of the Lie algebra
    $(A,[-,-])$, we have
    $$\begin{array}{ll}
        0&=\mathcal{B}(P([x,y])-[P(x),y]-[x,P(y)],z)\\
        &=\mathcal{B}([x,y],\hat{P}(z))-\mathcal{B}(P(x),[y,z])-\mathcal{B}(x,[P(y),z])\\
        &=\mathcal{B}(x,[y,\hat{P}(z)]-\hat{P}([y,z])-[P(y),z]).\\
    \end{array}$$
    Thus by the nondegeneracy of $\mathcal{B}$,
    Eq.~(\ref{eq:dualadj2}) holds. Similarly, Eq.~(\ref{eq:dualadj1})
    holds since $P$ is a derivation of the commutative associative
    algebra $(A,\cdot)$. Moreover, by Eq.~(\ref{eq:GLR}), we have
    $$\begin{array}{ll}
        0&=\mathcal{B}([x\cdot y,z]-x\cdot [y,z]-[x,z]\cdot y+x\cdot y\cdot P(z),w)\\
        &=\mathcal{B}(z,[w,x\cdot y]+[x,y\cdot w]+[y,w\cdot x]+\hat{P}(w\cdot x\cdot y)).
    \end{array}$$
    Thus Eq.~(\ref{eq:dualadj3}) holds. Hence $\hat{P}$ dually
    represents $(A,P)$.
    Define a linear map $\varphi:A\rightarrow A^{*}$ by
    \begin{equation}\label{eq:OdinPair}
        \langle \varphi(x),y\rangle=\mathcal{B}(x,y),\;\;\forall x,y\in A.
    \end{equation}
    By the nondegeneracy of $\mathcal{B}$, $\varphi$ is a linear
    isomorphism. Moreover,  we have
    $$\begin{array}{ll}
        \langle \varphi(\mathrm{ad}(x)y),z\rangle&=\langle \varphi([x,y]),z\rangle=\mathcal{B}([x,y],z)=-\mathcal{B}(y,[x,z])\\
        &=-\langle \varphi(y),[x,z]\rangle=\langle
        \mathrm{ad}^{*}(x)\varphi(y),z\rangle.
    \end{array}$$
    Thus $\varphi\mathrm{ad}(x)=\mathrm{ad}^{*}(x)\varphi$ for all
    $x\in A$. Similarly
    $\varphi\mathcal{L}(x)=-\mathcal{L}^{*}(x)\varphi$ for all $x\in
    A$. Moreover, we have
    $$\langle \varphi(P(x)),y\rangle=\mathcal{B}(P(x),y)=\mathcal{B}(x,\hat{P}(y))=\langle \varphi(x),\hat{P}(y)\rangle=\langle \hat{P}^{*}(\varphi(x)),y\rangle.$$
    Hence  $\varphi P=\hat{P}^{*}\varphi$. Therefore  as
    representations of $(A,P)$,
    $(-\mathcal{L}^{*},\mathrm{ad}^{*},\hat{P}^{*},A^{*})$ and
  $(\mathcal{L},\mathrm{ad},P,A)$ are equivalent.

    Conversely, suppose that $\varphi:A\rightarrow A^{*}$ is the
    linear isomorphism giving the equivalence between the two
    representations $(-\mathcal{L}^{*},\mathrm{ad}^{*},Q^{*},A^{*})$
    and  $(\mathcal{L},\mathrm{ad},P,A)$. Define a bilinear form
    $\mathcal{B}$ on $A$ by Eq.~(\ref{eq:OdinPair}). Then by a similar
    proof as above, $\mathcal{B}$ is a nondegenerate invariant
    bilinear form on $(A,P)$ such that $\hat{Q}=P$.
\end{proof}

\delete{
\begin{pro}\label{pro:adjrep}
Let $(A,P)$ be a relative Poisson algebra, and $\mathcal{B}$ be a nondegenerate invariant bilinear form on $(A,P)$. Let $\hat{P}$ be the adjoint map of $P$ with respect to $\mathcal{B}$. Then $\hat{P}$ dually represents $(A,P)$, that is, $(-\mathcal{L}^{*},\mathrm{ad}^{*},\hat{P}^{*},A^{*})$ is a representation of $(A,P)$. Furthermore, as representations of $(A,P)$, $(-\mathcal{L}^{*},\mathrm{ad}^{*},\hat{P}^{*},A^{*})$ is equivalent to $(\mathcal{L},\mathrm{ad},P,A)$. Conversely, let $(A,P)$ be a relative Poisson algebra, and $Q:A\rightarrow A$ be a linear map. If $(-\mathcal{L}^{*},\mathrm{ad}^{*},{Q}^{*},A^{*})$ is a representation which is equivalent to $(\mathcal{L},\mathrm{ad},P,A)$, then there exists a nondegenerate invariant bilinear form $\mathcal{B}$ on $(A,P)$ such that $\hat{Q}=P$.
\end{pro}
\begin{proof}
Let $x,y,z,w\in A$. Since $P$ is a derivation of the Lie algebra
$(A,[-,-])$, we have
$$\begin{array}{ll}
    0&=\mathcal{B}(P([x,y])-[P(x),y]-[x,P(y)],z)\\
    &=\mathcal{B}([x,y],\hat{P}(z))-\mathcal{B}(P(x),[y,z])-\mathcal{B}(x,[P(y),z])\\
    &=\mathcal{B}(x,[y,\hat{P}(z)]-\hat{P}([y,z])-[P(y),z]).\\
\end{array}$$
Thus by the nondegeneracy of $\mathcal{B}$,
Eq.~(\ref{eq:dualadj2}) holds. Similarly, Eq.~(\ref{eq:dualadj1})
holds since $P$ is a derivation of the commutative associative
algebra $(A,\cdot)$. Moreover, by Eq.~(\ref{eq:GLR}), we have
$$\begin{array}{ll}
    0&=\mathcal{B}([x\cdot y,z]-x\cdot [y,z]-[x,z]\cdot y+x\cdot y\cdot P(z),w)\\
    &=\mathcal{B}(z,[w,x\cdot y]+[x,y\cdot w]+[y,w\cdot x]+\hat{P}(w\cdot x\cdot y)).
\end{array}$$
Thus Eq.~(\ref{eq:dualadj3}) holds. Hence $\hat{P}$ dually
represents $(A,P)$.
Define a linear map $\varphi:A\rightarrow A^{*}$ by
\begin{equation}\label{eq:OdinPair}
\langle \varphi(x),y\rangle=\mathcal{B}(x,y),\forall x,y\in A.
\end{equation}
By the nondegeneracy of $\mathcal{B}$, $\varphi$ is a linear
isomorphism. Moreover,  we have
$$\begin{array}{ll}
    \langle \varphi(\mathrm{ad}(x)y),z\rangle&=\langle \varphi([x,y]),z\rangle=\mathcal{B}([x,y],z)=-\mathcal{B}(y,[x,z])\\
    &=-\langle \varphi(y),[x,z]\rangle=\langle
    \mathrm{ad}^{*}(x)\varphi(y),z\rangle.
\end{array}$$
Thus $\varphi\mathrm{ad}(x)=\mathrm{ad}^{*}(x)\varphi$ for all
$x\in A$. Similarly
$\varphi\mathcal{L}(x)=-\mathcal{L}^{*}(x)\varphi$ for all $x\in
A$. Moreover, we have
$$\langle \varphi(P(x)),y\rangle=\mathcal{B}(P(x),y)=\mathcal{B}(x,\hat{P}(y))=\langle \varphi(x),\hat{P}(y)\rangle=\langle \hat{P}^{*}(\varphi(x)),y\rangle.$$
Hence  $\varphi P=\hat{P}^{*}\varphi$. Therefore  as
representations of $(A,P)$,
$(-\mathcal{L}^{*},\mathrm{ad}^{*},\hat{P}^{*},A^{*})$ is
equivalent to $(\mathcal{L},\mathrm{ad},P,A)$.

Conversely, suppose that $\varphi:A\rightarrow A^{*}$ is the
linear isomorphism giving the equivalence between the two
representations $(-\mathcal{L}^{*},\mathrm{ad}^{*},Q^{*},A^{*})$
and  $(\mathcal{L},\mathrm{ad},P,A)$. Define a bilinear form
$\mathcal{B}$ on $A$ by Eq.~(\ref{eq:OdinPair}). Then by a similar
proof as above, $\mathcal{B}$ is a nondegenerate invariant
bilinear form on $(A,P)$ such that $\hat{Q}=P$.
\end{proof}}

\begin{defi}\label{defi:SMT}
Let $(A,P)$ be a relative Poisson algebra. Suppose that
$(A^{*},Q^{*})$ is a relative Poisson algebra. If there is a
relative Poisson algebra structure on the direct sum $A\oplus
A^*$ of vector  spaces such that $(A\oplus
A^{*},P+Q^{*},\mathcal{B}_{d})$ is a Frobenius relative Poisson
algebra, where $\mathcal{B}_{d}$ is given by
\begin{equation}\label{eq:BilinearForm}
\mathcal{B}_{d}(x+a^{*},y+b^{*})=\langle x,b^{*}\rangle+\langle a^{*},y\rangle,\;\;\forall x,y\in A, a^{*},b^{*}\in A^{*},
\end{equation}
and both $(A,P)$ and $(A^{*},Q^{*})$ are relative Poisson
subalgebras, then $((A\oplus A^{*},P+Q^{*}$, $\mathcal{B}_{d}),(A,P)$,
$(A^{*},Q^{*}))$ is called a \textbf{Manin triple of
relative Poisson algebras}. We denote it by $((A\bowtie
A^{*},P+Q^{*}$, $\mathcal{B}_{d}),(A,P),(A^{*},Q^{*}))$.
\end{defi}

The notation $A\bowtie A^*$ is justified since the relative
Poisson algebra structure on $A\oplus A^*$ comes from a matched
pair of relative Poisson algebras $(A,P)$ and $(A^*,Q^*)$ in Proposition~\ref{pro:matched pairs}.

% Eqs.~(\ref{eq:Asso}) and
%(\ref{eq:Lie}).

%\begin{rmk}
%It is obvious that a standard Manin triple of relative Poisson algebras is simultaneously a (standard) Manin triple of Lie algebras (\cite{CP1}) and a commutative version of a double construction of Frobenius algebras %(\cite{Bai2010}).
%\end{rmk}

\begin{lem}\label{lem:standard Manin triple of relative Poisson algebras}
Let $((A\bowtie A^{*},P+Q^{*},\mathcal{B}_{d}),(A,P),(A^{*},Q^{*}))$
be a  Manin triple of relative Poisson algebras.
\begin{enumerate}
\item The adjoint $\widehat{ P+Q^*}$ of $P+Q^*$  with respect to
$\mathcal{B}_{d}$ is $Q+P^{*}$. Further $Q+P^{*}$ dually represents
$(A\bowtie A^{*},P+Q^{*})$.

\item $Q$ dually represents $(A,P)$.

\item $P^{*}$ dually represents $(A^{*},Q^{*})$.
\end{enumerate}
\end{lem}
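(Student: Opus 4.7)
The plan is to proceed in three steps corresponding to the three parts of the lemma, with Proposition~\ref{pro:adjrep} and Corollary~\ref{cor:dualadj2} doing most of the work.

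\textbf{Step 1: Compute the adjoint of $P+Q^{*}$.} For any $x,y\in A$ and $a^{*},b^{*}\in A^{*}$, I would expand
\begin{eqnarray*}
\mathcal{B}_{d}\bigl((P+Q^{*})(x+a^{*}),y+b^{*}\bigr)
&=&\mathcal{B}_{d}\bigl(P(x)+Q^{*}(a^{*}),y+b^{*}\bigr)\\
&=&\langle P(x),b^{*}\rangle+\langle Q^{*}(a^{*}),y\rangle
\end{eqnarray*}
directly from the definition~\eqref{eq:BilinearForm} of $\mathcal{B}_{d}$. Using the standard duality relations $\langle P(x),b^{*}\rangle=\langle x,P^{*}(b^{*})\rangle$ and $\langle Q^{*}(a^{*}),y\rangle=\langle a^{*},Q(y)\rangle$, the right-hand side rearranges to $\mathcal{B}_{d}(x+a^{*},(Q+P^{*})(y+b^{*}))$. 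By the defining equation~\eqref{eq:adjmap} of the adjoint, this identifies $\widehat{P+Q^{*}}=Q+P^{*}$. Since $(A\bowtie A^{*},P+Q^{*},\mathcal{B}_{d})$ is a Frobenius relative Poisson algebra by Definition~\ref{defi:SMT}, Proposition~\ref{pro:adjrep} immediately yields that $Q+P^{*}$ dually represents $(A\bowtie A^{*},P+Q^{*})$.

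\textbf{Step 2: Restrict to $A$ to obtain (2).} Since $Q+P^{*}$ dually represents $(A\bowtie A^{*},P+Q^{*})$, Corollary~\ref{cor:dualadj2} gives Eqs.~\eqref{eq:dualadj1}--\eqref{eq:dualadj3} on the whole algebra $A\bowtie A^{*}$. The plan is to substitute arbitrary elements of $A$ into these three equations and observe that, because $(A,P)$ is a relative Poisson subalgebra of $(A\bowtie A^{*},P+Q^{*})$, all intermediate products, brackets, and iterated expressions $x\cdot y$, $x\cdot y\cdot z$, $[x,y]$ remain in $A$. Consequently $(Q+P^{*})$ acts on these as $Q$, and the three equations reduce exactly to Eqs.~\eqref{eq:dualadj1}, \eqref{eq:dualadj2}, \eqref{eq:eqdualadj1} for $(A,P)$ with linear map $Q$. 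Corollary~\ref{cor:dualadj2} then concludes that $Q$ dually represents $(A,P)$.

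\textbf{Step 3: Restrict to $A^{*}$ to obtain (3).} The argument for $P^{*}$ is entirely symmetric: substituting $a^{*},b^{*},c^{*}\in A^{*}$ into the three equations from Corollary~\ref{cor:dualadj2} applied to $Q+P^{*}$ on $A\bowtie A^{*}$, and using that $(A^{*},Q^{*})$ is a relative Poisson subalgebra, yields precisely the conditions that characterize $P^{*}$ dually representing $(A^{*},Q^{*})$.

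No step presents a genuine obstacle: Step~1 is a short duality computation, and Steps~2 and~3 are mechanical restrictions exploiting the subalgebra property built into Definition~\ref{defi:SMT}. The only point requiring care is to track that $(Q+P^{*})$ restricts to $Q$ on $A$ and to $P^{*}$ on $A^{*}$, which is transparent from the notation $\phi_{V_{1}\oplus V_{2}}$ defined in~\eqref{eq:Liehom}.
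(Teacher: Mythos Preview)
Your proposal is correct and follows essentially the same route as the paper's proof: compute the adjoint directly, invoke Proposition~\ref{pro:adjrep}, then restrict the three equations from Corollary~\ref{cor:dualadj2} to $A$ (by setting the $A^{*}$-components to zero) and to $A^{*}$ symmetrically. The only minor slip is that Corollary~\ref{cor:dualadj2} yields Eqs.~\eqref{eq:dualadj1}, \eqref{eq:dualadj2}, \eqref{eq:eqdualadj1} (not \eqref{eq:dualadj3}), but since these two characterizations of ``dually represents'' are equivalent, this does not affect the argument.
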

\begin{proof}
Let $x,y\in A, a^{*},b^{*}\in A^{*}$. Then we have
$$\begin{array}{ll}
    \mathcal{B}_{d}((P+Q^{*})(x+a^{*}),y+b^{*})&=\mathcal{B}_{d}(P(x)+Q^{*}(a^{*}),y+b^{*})=\langle P(x),b^{*}\rangle+\langle Q^{*}(a^{*}),y\rangle\\
    &=\langle x,P^{*}(b^{*})\rangle+\langle a^{*},Q(y)\rangle=\mathcal{B}_{d}(x+a^{*},(Q+P^{*})(y+b^{*})).
\end{array}$$
Hence the adjoint $\widehat{ P+Q^*}$ of $P+Q^*$ with respect to
$\mathcal B_{d}$ is $Q+ P^*$. Then by Proposition \ref{pro:adjrep},
$Q+P^{*}$ dually represents $(A\bowtie A^{*},P+Q^{*})$.
By %Corollary \ref{cor:dualadj} and
Corollary \ref{cor:dualadj2}, it holds if and only if for all
$x,y,z\in A, a^{*},b^{*},c^{*}\in A^{*}$, {\small\begin{eqnarray*}
&&(x+a^{*})\cdot(Q+P^{*})(y+b^{*})-(P+Q^{*})(x+a^{*})\cdot(y+b^{*})-(Q+P^{*})((x+a^{*})\cdot(y+b^{*}))=0,\\
&&[x+a^{*},(Q+P^{*})(y+b^{*})]-[(P+Q^{*})(x+a^{*}),y+b^{*}]-(Q+P^{*})[(x+a^{*})\cdot(y+b^{*})]=0,\\
&&(P+Q^{*}+Q+P^{*})((x+a^{*})\cdot(y+b^{*})\cdot(z+c^{*}))=0.
\end{eqnarray*}}
Now taking $a^{*}=b^{*}=c^{*}=0$ in the above equations, we get Eqs.~(\ref{eq:dualadj1}), (\ref{eq:dualadj2}) and (\ref{eq:eqdualadj1}). Hence $Q$ dually represents $(A,P)$. Similarly, $P^{*}$ dually represents $(A^{*},Q^{*})$ by taking $x=y=z=0$.
\end{proof}

%Suppose that  $(A,P)$ and $(A^{*},Q^{*})$ are relative Poisson algebras. For clarity, we denote the operations on $A$ and $A^{*}$ by
%$$\mathcal{L}_{1}(x)y=x\cdot_{1} y, \mathcal{L}_{2}(a^{*})b^{*}=a^{*}\cdot_{2} b^{*}, \mathrm{ad}_{1}(x)y=[x,y]_{1}, \mathrm{ad}_{2}(a^{*})b^{*}=[a^{*},b^{*}]_{2},$$
%for all $x,y\in A, a^{*},b^{*}\in A^{*}$.
%Then by (\cite{Bai2010}) and (\cite{CP1}), we have the following
%theorem: {\bf CM: WHY?}

\begin{thm}\label{thm:equivalencestandard Manin triple and matched pair}
Let $(A,\cdot_{A},[-,-]_{A},P)$ be a relative Poisson algebra.
Suppose that there is a relative Poisson algebra structure
$(A^*, \cdot_{A^*}, [-,-]_{A^*}, Q^*)$ on the dual space $A^*$.
Then there is a   Manin triple of relative Poisson
algebras $((A\bowtie A^{*},P+Q^{*},\mathcal{B}_{d}),(A,P),$
$(A^{*},Q^{*}))$ if and only if
$((A,P),(A^{*},Q^{*}),-\mathcal{L}_{A}^{*},\mathrm{ad}_{A}^{*},-\mathcal{L}_{A^*}^{*},\mathrm{ad}_{A^*}^{*})$
is a matched pair of relative Poisson algebras.
\end{thm}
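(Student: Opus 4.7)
The approach is to reduce the equivalence to Proposition~\ref{pro:matched pairs}, which translates between relative Poisson algebra structures on $A\oplus A^{*}$ having $A$ and $A^{*}$ as subalgebras and matched pairs of relative Poisson algebras $((A,P),(A^{*},Q^{*}),\mu_{1},\rho_{1},\mu_{2},\rho_{2})$, together with a direct computation showing that the invariance of $\mathcal{B}_{d}$ is equivalent to the four coadjoint-type representations specified in the statement. I would prove each direction in turn.

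For the forward direction, assume the Manin triple exists. Since $(A,P)$ and $(A^{*},Q^{*})$ both sit as relative Poisson subalgebras of $(A\oplus A^{*},P+Q^{*})$, Proposition~\ref{pro:matched pairs} yields linear maps $\mu_{1},\rho_{1}\colon A\to\mathrm{End}(A^{*})$ and $\mu_{2},\rho_{2}\colon A^{*}\to\mathrm{End}(A)$ assembling into a matched pair of relative Poisson algebras. The invariance identities $\mathcal{B}_{d}(x\cdot y,b^{*})=\mathcal{B}_{d}(x,y\cdot b^{*})$ and $\mathcal{B}_{d}([x,y],b^{*})=\mathcal{B}_{d}(x,[y,b^{*}])$ for $x,y\in A$ and $b^{*}\in A^{*}$, when expanded using $y\cdot b^{*}=\mu_{1}(y)b^{*}+\mu_{2}(b^{*})y$ and $[y,b^{*}]=\rho_{1}(y)b^{*}-\rho_{2}(b^{*})y$ together with the fact that pairings within $A$ or within $A^{*}$ vanish under $\mathcal{B}_{d}$, force $\mu_{1}(y)=-\mathcal{L}_{A}^{*}(y)$ and $\rho_{1}(y)=\mathrm{ad}_{A}^{*}(y)$. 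The symmetric identities with $a^{*},b^{*}\in A^{*}$ and $y\in A$ analogously force $\mu_{2}=-\mathcal{L}_{A^{*}}^{*}$ and $\rho_{2}=\mathrm{ad}_{A^{*}}^{*}$.

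For the backward direction, assume the specified matched pair of relative Poisson algebras. Proposition~\ref{pro:matched pairs} produces a relative Poisson algebra $(A\bowtie A^{*},P+Q^{*})$ in which $(A,P)$ and $(A^{*},Q^{*})$ are subalgebras. It then remains to verify that $\mathcal{B}_{d}$ is invariant in the sense of Eqs.~(\ref{eq:Fro}) and (\ref{eq:QuadLie}); this reduces to short calculations that unpack the products on $A\oplus A^{*}$ using Eqs.~(\ref{eq:Asso}) and (\ref{eq:Lie}) and then apply the definitions of the dual maps $-\mathcal{L}_{A}^{*}$, $\mathrm{ad}_{A}^{*}$, $-\mathcal{L}_{A^{*}}^{*}$ and $\mathrm{ad}_{A^{*}}^{*}$. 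Non-degeneracy of $\mathcal{B}_{d}$ is immediate from the construction.

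The main obstacle is the careful bookkeeping of signs and pairings across four representations (two on the commutative side, two on the Lie side) and the verification that the ``derivation on the representation space'' slots $P_{2}$ and $P_{1}$ appearing in Proposition~\ref{pro:matched pairs} correctly correspond to the $A^{*}$-component $Q^{*}$ and the $A$-component $P$ of the total derivation $P+Q^{*}$ on $A\oplus A^{*}$. Beyond this, everything amounts to direct dualization and a case-split according to whether the arguments of $\mathcal{B}_{d}$ come from $A$ or $A^{*}$.
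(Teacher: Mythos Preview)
Your proposal is correct and follows essentially the same route as the paper. The only difference is one of packaging: where you propose to verify directly that invariance of $\mathcal{B}_{d}$ forces $\mu_{1}=-\mathcal{L}_{A}^{*}$, $\rho_{1}=\mathrm{ad}_{A}^{*}$, $\mu_{2}=-\mathcal{L}_{A^{*}}^{*}$, $\rho_{2}=\mathrm{ad}_{A^{*}}^{*}$ (and conversely that these choices make $\mathcal{B}_{d}$ invariant), the paper simply cites these facts from \cite{CP1} for the Lie part and \cite{Bai2010} for the associative part, and then invokes Proposition~\ref{pro:matched pairs} in both directions exactly as you do.
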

\begin{proof}
It is known in \cite{CP1} that there is a Lie algebra structure on the direct sum $A\oplus A^*$ of vector spaces
such that both $(A,[-,-]_{A})$ and $(A^*, [-,-]_{A^*})$ are Lie subalgebras and the bilinear form $\mathcal B_{d}$ on $A\oplus A^*$ given by
Eq.~(\ref{eq:BilinearForm}) satisfies Eq.~(\ref{eq:QuadLie}) if and only if
$(A,A^{*},\mathrm{ad}^{*}_{A},\mathrm{ad}^{*}_{A^{*}})$ is a matched pair of Lie algebras.
 Similarly, by \cite{Bai2010}, there is a commutative associative algebra structure on $A\oplus A^*$
such that both $(A,\cdot_{A})$ and $(A^*, \cdot_{A^*})$ are
commutative associative  subalgebras and
$\mathcal B_{d}$ satisfies Eq.~(\ref{eq:Fro}) if and only if
$(A,A^{*},-\mathcal{L}^{*}_{A},-\mathcal{L}^{*}_{A^{*}})$ is a matched
pair of commutative associative algebras. Hence if $((A\bowtie
A^{*},P+Q^{*},\mathcal{B}_{d}),(A,P),$ $(A^{*},Q^{*}))$ is a
Manin triple of relative Poisson algebras, then by
Proposition~\ref{pro:matched pairs} with
$$A_1=A,P_1=P,A_2=A^*,P_2=Q^*,\mu_1=-\mathcal{L}_{A}^{*},\rho_1=\mathrm{ad}_{A}^{*},\mu_2=-\mathcal{L}_{A^*}^{*},\rho_2=\mathrm{ad}_{A^*}^{*},$$
$((A,P),(A^{*},Q^{*}),-\mathcal{L}_{A}^{*},\mathrm{ad}_{A}^{*},-\mathcal{L}_{A^*}^{*},\mathrm{ad}_{A^*}^{*})$
is a matched pair of relative Poisson algebras. Conversely, if
$((A,P),(A^{*},Q^{*}),-\mathcal{L}_{A}^{*},\mathrm{ad}_{A}^{*},-\mathcal{L}_{A^*}^{*},\mathrm{ad}_{A^*}^{*})$
is a matched pair of relative Poisson algebras, then by
Proposition~\ref{pro:matched pairs} again,  there is a relative
Poisson algebra $(A\bowtie A^*,P+Q^*)$ obtained from the matched
pair with both $(A,P)$ and $(A^{*},Q^{*})$ as relative Poisson
subalgebras. Moreover, the bilinear form $\mathcal B_{d}$ is
invariant on $(A\bowtie A^*,P+Q^*)$. Hence $((A\bowtie
A^{*},P+Q^{*},\mathcal{B}_{d}),(A,P),$ $(A^{*},Q^{*}))$ is a Manin
triple of relative Poisson algebras.
\end{proof}

%{\bf CM: the whole expression is unclear at all!}

%{\bf CM: Please give a complete and clear proof!}

\subsection{Relative Poisson bialgebras}\

We recall the notions of (commutative and cocommutative) infinitesimal bialgebras (\cite{Bai2010}) and Lie bialgebras
(\cite{CP1}) before we introduce the notion of relative Poisson
bialgebras.

\begin{defi}
A \textbf{cocommutative coassociative coalgebra} is a pair
$(A,\Delta)$, such that $A$ is a vector space and
$\Delta:A\rightarrow A\otimes A$ is a linear map
%{\bf
%CM: what is a linear map? undefined!}
satisfying
\begin{equation}\label{eq:symmetric}
\tau\Delta=\Delta,
\end{equation}
\begin{equation}\label{AssoCo}
(\mathrm{id}\otimes \Delta)\Delta=(\Delta\otimes\mathrm{id})\Delta,
\end{equation}
where $\tau: A\otimes A\rightarrow A\otimes A$ is the exchanging operator defined as $\tau(x\otimes
y)=y\otimes x$, for all $x,y\in A$.
\end{defi}

%Under the finite dimensional assumption, $(A,\Delta)$ is a cocommutative coassociative coalgebra if and only if $(A^{*},\Delta^{*})$ is a commutative associative algebra.

\begin{defi}
A \textbf{commutative and cocommutative infinitesimal bialgebra}
is a triple $(A,\cdot,\Delta)$ such that:

%{\bf CM: you have never understood what ASI means at all£¡}

(1) $(A,\cdot)$ is a commutative associative algebra;

(2) $(A,\Delta)$ is a cocommutative coassociative coalgebra;

(3) $\Delta$ satisfies the following condition:
\begin{equation}\label{AssoBia}
\Delta(x\cdot y)=(\mathcal{L}(x)\otimes \mathrm{id})\Delta(y)+(\mathrm{id}\otimes\mathcal{L}(y))\Delta(x),\;\;\forall x,y\in A.
\end{equation}
\end{defi}

\begin{defi}\label{de:LieCo}
A \textbf{Lie coalgebra} is a pair $(A,\delta)$, such that $A$ is a vector space and $\delta:A\rightarrow A\otimes A$ is a linear map satisfying
\begin{equation}\label{eq:skewsymmetric}
\tau\delta=-\delta,
\end{equation}
\begin{equation}\label{eq:LieCo}
(\mathrm{id}+\xi+\xi^{2})(\mathrm{id}\otimes\delta)\delta=0,
\end{equation}
where $\xi:A\otimes A\otimes A\rightarrow A\otimes A\otimes A$ is the linear map
defined as $\xi(x\otimes y\otimes z)=y\otimes z\otimes x$, for all
$x,y,z\in A.$
\end{defi}
%Under the finite dimensional assumption, $(A,\delta)$ is a Lie coalgebra if and only if $(A^{*},\delta^{*})$ is a Lie algebra.

\begin{defi}\label{de:LieBia}
A \textbf{Lie bialgebra} is a triple $(A,[-,-],\delta)$, such that

(1) $(A,[-,-])$ is a Lie algebra;

(2) $(A,\delta)$ is a Lie coalgebra;

(3) $\delta$ is a 1-cocycle of $(A,[-,-])$ with values in $A\otimes A$, that is,
\begin{equation}\label{eq:LieBia}
\delta([x,y])=(\mathrm{ad}(x)\otimes \mathrm{id}+\mathrm{id}\otimes\mathrm{ad}(x))\delta(y)-(\mathrm{ad}(y)\otimes \mathrm{id}+\mathrm{id}\otimes\mathrm{ad}(y))\delta(x),\;\;\forall x,y\in A.
\end{equation}
\end{defi}

Now we give the definitions of a relative Poisson coalgebra and a relative Poisson bialgebra.

\begin{defi}
Let $A$ be a vector space,  and $\Delta,\delta:A\rightarrow A\otimes A$ and $Q:A\rightarrow A$ be linear maps. Then $(A,\Delta,\delta,Q)$ is called a \textbf{relative Poisson coalgebra} if $(A,\Delta)$ is a cocommutative coassociative coalgebra, $(A,\delta)$ is a Lie coalgebra, and the following conditions are satisfied:
\begin{equation}\label{eq:Co1}
\Delta Q=(Q\otimes \mathrm{id}+\mathrm{id}\otimes Q)\Delta,
\end{equation}
\begin{equation}\label{eq:Co2}
\delta Q=(Q\otimes \mathrm{id}+\mathrm{id}\otimes Q) \delta,
\end{equation}
\begin{equation}\label{eq:Co3}
(\mathrm{id}\otimes\Delta)\delta(x)-(\delta\otimes
\mathrm{id})\Delta(x)-(\tau\otimes
\mathrm{id})(\mathrm{id}\otimes\delta)\Delta(x)-(Q\otimes
\mathrm{id}\otimes \mathrm{id})(\Delta\otimes
\mathrm{id})\Delta(x)=0, \;\;\forall x\in A.
\end{equation}
\end{defi}

\begin{pro} Under the finite dimensional assumption,
$(A,\Delta,\delta,Q)$ is a relative Poisson coalgebra if and only if $(A^{*},\Delta^{*},\delta^{*},$ $Q^{*})$ is a relative Poisson algebra.
\end{pro}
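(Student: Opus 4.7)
The plan is to verify the five defining conditions of a relative Poisson algebra on $A^{*}$ by dualizing the corresponding conditions of a relative Poisson coalgebra on $A$. Since $A$ is finite-dimensional, the canonical pairing identifies $(A^{\otimes n})^{*}$ with $(A^{*})^{\otimes n}$, so every linear map $T:A\to A^{\otimes n}$ is completely determined by its dual $T^{*}:(A^{*})^{\otimes n}\to A^{*}$, and each coalgebra axiom translates to the corresponding algebra axiom after pairing with an arbitrary tensor in $(A^{*})^{\otimes n}$ and evaluating at an arbitrary $x\in A$.

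First I would invoke the standard duality results: $(A,\Delta)$ is a cocommutative coassociative coalgebra if and only if $(A^{*},\Delta^{*})$ is a commutative associative algebra, and $(A,\delta)$ is a Lie coalgebra if and only if $(A^{*},\delta^{*})$ is a Lie algebra. Next, pairing Eq.~(\ref{eq:Co1}) with an arbitrary $\alpha\otimes\beta\in A^{*}\otimes A^{*}$ and evaluating at $x\in A$ reduces directly to the Leibniz identity $Q^{*}(\alpha\cdot_{A^{*}}\beta)=Q^{*}(\alpha)\cdot_{A^{*}}\beta+\alpha\cdot_{A^{*}}Q^{*}(\beta)$, so Eq.~(\ref{eq:Co1}) is equivalent to $Q^{*}$ being a derivation of $(A^{*},\Delta^{*})$. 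An analogous computation shows that Eq.~(\ref{eq:Co2}) is equivalent to $Q^{*}$ being a derivation of $(A^{*},\delta^{*})$.

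The main step, and the expected obstacle, is to show that Eq.~(\ref{eq:Co3}) is equivalent to the relative Leibniz rule~(\ref{eq:GLR}) for $(A^{*},\Delta^{*},\delta^{*},Q^{*})$. Fixing $\alpha,\beta,\gamma\in A^{*}$ and $x\in A$, I would pair each of the four terms in $[\gamma,\alpha\cdot_{A^{*}}\beta]_{A^{*}}-[\gamma,\alpha]_{A^{*}}\cdot_{A^{*}}\beta-\alpha\cdot_{A^{*}}[\gamma,\beta]_{A^{*}}-\alpha\cdot_{A^{*}}\beta\cdot_{A^{*}}Q^{*}(\gamma)$ with $x$ and use Sweedler-style bookkeeping. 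The first three pairings yield $\langle\gamma\otimes\alpha\otimes\beta,(\mathrm{id}\otimes\Delta)\delta(x)\rangle$, $\langle\gamma\otimes\alpha\otimes\beta,(\delta\otimes\mathrm{id})\Delta(x)\rangle$, and $\langle\gamma\otimes\alpha\otimes\beta,(\tau\otimes\mathrm{id})(\mathrm{id}\otimes\delta)\Delta(x)\rangle$, where the $\tau\otimes\mathrm{id}$ in the third pairing arises from the swap needed to bring $\gamma$ into the first tensor slot. For the fourth term I would exploit the cocommutativity of $\Delta$ together with coassociativity, which together make $(\Delta\otimes\mathrm{id})\Delta(x)$ invariant under the full $S_{3}$-action on its three factors, to transfer $Q$ from the slot paired with $\gamma$ to the first slot, yielding $\langle\gamma\otimes\alpha\otimes\beta,(Q\otimes\mathrm{id}\otimes\mathrm{id})(\Delta\otimes\mathrm{id})\Delta(x)\rangle$. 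The arbitrariness of $\alpha,\beta,\gamma$ and $x$ then gives the equivalence with Eq.~(\ref{eq:Co3}); keeping track of tensor orderings, and in particular the careful use of cocommutativity plus coassociativity to reproduce the exact form displayed in Eq.~(\ref{eq:Co3}), is the main technical hurdle.
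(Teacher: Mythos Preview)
Your approach is correct and essentially the same as the paper's: both are straightforward duality computations, pairing each term against a generic tensor and reading off the corresponding identity on the other side. The only cosmetic difference is direction---the paper pairs each summand of Eq.~(\ref{eq:Co3}) with $a^{*}\otimes b^{*}\otimes c^{*}$ and reads off $[a^{*},b^{*}\cdot c^{*}]$, $[a^{*},b^{*}]\cdot c^{*}$, $b^{*}\cdot[a^{*},c^{*}]$, $Q^{*}(a^{*})\cdot b^{*}\cdot c^{*}$ directly, so the fourth term falls out immediately as $\langle(Q\otimes\mathrm{id}\otimes\mathrm{id})(\Delta\otimes\mathrm{id})\Delta(x),a^{*}\otimes b^{*}\otimes c^{*}\rangle=\langle x,Q^{*}(a^{*})\cdot b^{*}\cdot c^{*}\rangle$ without any explicit $S_{3}$-symmetry argument; your detour through $S_{3}$-invariance is valid but unnecessary once you orient the computation this way.
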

\begin{proof}
Obviously, $(A,\Delta)$ is a cocommutative coassociative coalgebra if and only if $(A^{*},\Delta^{*})$ is a commutative associative algebra and
 $(A,\delta)$ is a Lie coalgebra if and only if $(A^{*},\delta^{*})$ is a Lie algebra.

For all $a^{*},b^{*}\in A^{*}$, set $\Delta^{*}(a^{*}\otimes
b^{*})=a^{*}\cdot b^{*}$, $\delta^{*}(a^{*}\otimes b^{*})=[a^{*},
b^{*}]$. Then we have {\small\begin{eqnarray*}
&&\langle (\mathrm{id}\otimes\Delta)\delta(x),a^{*}\otimes b^{*}\otimes c^{*}\rangle=\langle x,\delta^{*}(\mathrm{id}\otimes \Delta^{*})(a^{*}\otimes b^{*}\otimes c^{*})\rangle=\langle x,[a^{*},b^{*}\cdot c^{*}]\rangle,\\
&&\langle (\delta\otimes \mathrm{id})\Delta(x) ,a^{*}\otimes b^{*}\otimes c^{*}\rangle=\langle x, \Delta^{*}(\delta^{*}\otimes \mathrm{id})(a^{*}\otimes b^{*}\otimes c^{*})\rangle=\langle x,[a^{*},b^{*}]\cdot c^{*}\rangle,\\
&&\langle(\tau\otimes \mathrm{id})(\mathrm{id}\otimes\delta)\Delta(x), a^{*}\otimes b^{*}\otimes c^{*}\rangle=\langle x, \Delta^{*}(\mathrm{id}\otimes\delta^{*})(\tau\otimes \mathrm{id})(a^{*}\otimes b^{*}\otimes c^{*})\rangle=\langle x,b^{*}\cdot[a^{*},c^{*}]\rangle,\\
&&\langle(Q\otimes \mathrm{id}\otimes \mathrm{id})(\Delta\otimes \mathrm{id})\Delta(x), a^{*}\otimes b^{*}\otimes c^{*}\rangle=\langle x, \Delta^{*}(\Delta^{*}\otimes \mathrm{id})(Q^{*}\otimes \mathrm{id}\otimes \mathrm{id})(a^{*}\otimes b^{*}\otimes c^{*})\rangle\\
&&\mbox{}\hspace{6.3cm}=\langle x,Q^{*}(a^{*})\cdot b^{*}\cdot
c^{*}\rangle,
\end{eqnarray*}}
for all $x\in A$ and $a^{*},b^{*},c^{*}\in A^{*}$. Thus
Eq.~(\ref{eq:GLR}) holds for $(A^{*},\Delta^{*},\delta^{*},$
$Q^{*})$ as a relative Poisson algebra if and only if
Eq.~(\ref{eq:Co3}) holds. Similarly, $Q^*$ is a derivation of
$(A^{*},\Delta^{*})$ as a commutative associative algebra  if and
only if Eq.~(\ref{eq:Co1}) holds and $Q^*$ is a derivation of
$(A^{*},\delta^{*})$ as a Lie algebra if and only if
Eq.~(\ref{eq:Co2}) holds.
\end{proof}

\begin{defi}\label{de:GPBA}
A \textbf{relative Poisson bialgebra} is a collection $(A,\cdot,[-,-],\Delta,\delta,P,$ $Q)$ satisfying the following conditions:

(1) $(A,\cdot,[-,-],P)$ is a relative Poisson algebra;

(2) $(A,\Delta,\delta,Q)$ is a relative Poisson coalgebra;

(3) $\Delta$ satisfies Eq.~(\ref{AssoBia}) and hence $(A,\cdot,\Delta)$ is a commutative and cocommutative infinitesimal bialgebra;

(4)  $\delta$ satisfies Eq.~(\ref{eq:LieBia}) and hence  $(A,[-,-],\delta)$ is a Lie bialgebra;

(5) $Q$ dually represents $(A,P)$, that is, Eqs.~(\ref{eq:dualadj1}),~(\ref{eq:dualadj2}) and (\ref{eq:eqdualadj1}) hold;

(6) $P^{*}$ dually represents $(A^{*},Q^{*})$, that is, the following equations hold:
\begin{equation}\label{eq:Bi1}
\Delta P=(P\otimes \mathrm{id}-\mathrm{id}\otimes Q) \Delta,
\end{equation}
\begin{equation}\label{eq:Bi2}
\delta P=(P\otimes \mathrm{id}-\mathrm{id}\otimes Q) \delta,
\end{equation}
\begin{equation}\label{eq:Bi3}
(\Delta\otimes \mathrm{id}) \Delta (P+Q)=0;
\end{equation}

(7) The following equations hold:
\begin{eqnarray}
&&
\delta(x\cdot y)-(\mathrm{id}\otimes\mathrm{ad}(y))\Delta(x)-(\mathcal{L}(x)\otimes \mathrm{id})\delta(y)\nonumber\\
&&-(\mathrm{id}\otimes\mathrm{ad}(x))\Delta(y)-(\mathcal{L}(y)\otimes \mathrm{id})\delta(x)-(\mathrm{id}\otimes Q)\Delta(x\cdot y)=0,
\label{eq:Bi4}\\
%\end{eqnarray}
%\begin{eqnarray}
&&\Delta([x,y])-(\mathcal{L}(y)\otimes \mathrm{id})\delta(x)-(\mathrm{id}\otimes\mathrm{ad}(x))\Delta(y)\nonumber\\
&&+(\mathrm{id}\otimes\mathcal{L}(y))\delta(x)-(\mathrm{ad}(x)\otimes \mathrm{id})\Delta(y)+\Delta(P(x)\cdot y)=0,
\label{eq:Bi5}
\end{eqnarray}
for all $x,y\in A$.
\end{defi}

\begin{rmk}
The notion of a Poisson bialgebra given in \cite{NB} is recovered
from the above notion of a relative Poisson bialgebra by
letting $P=Q=0$.
\end{rmk}

\begin{rmk}\label{rmk:dual}
        Let $(A,\cdot_{A},[-,-]_{A},\Delta_{A},\delta_{A},P,Q)$ be a
        relative Poisson bialgebra. It is straightforward to show that
         $(A^{*},\cdot_{A^{*}},[-,-]_{A^{*}},\Delta_{A^{*}},\delta_{A^{*}},Q^{*},P^{*})$
        is also a relative Poisson bialgebra, where
        $\cdot_{A^{*}},[-,-]_{A^{*}}:A^{*}\otimes A^{*}\rightarrow A^{*}$
        and $\Delta_{A^{*}},\delta_{A^{*}}:A^{*}\rightarrow A^{*}\otimes
        A^{*}$ are respectively given by
        \begin{equation}\label{eq:operations on dual space1}
            a^{*}\cdot_{A^{*}}b^{*}=\Delta^{*}_{A}(a^{*}\otimes b^{*}),\;\;
            [a^{*},b^{*}]_{A^{*}}=\delta^{*}_{A}(a^{*}\otimes b^{*}),
        \end{equation}
        \begin{equation}\label{eq:operations on dual space2}
            \langle \Delta_{A^{*}}(a^{*}),x\otimes y\rangle=-\langle
            a^{*},x\cdot_{A} y\rangle,\;\;\langle \delta_{A^{*}}(a^{*}),x\otimes
            y\rangle=-\langle a^{*},[x,y]_{A}\rangle,
        \end{equation}
        for all $x,y\in A, a^{*},b^{*}\in A^{*}$.
\end{rmk}

\begin{thm} \label{thm:bia-match}
Let $(A,\cdot_{A},[-,-]_{A},P)$ be a relative Poisson algebra.
Suppose that there is a relative Poisson algebra structure
$(A^*, \cdot_{A^*}, [-,-]_{A^*}, Q^*)$ on the dual space $A^*$ which is given by
a relative Poisson coalgebra  $(A,\Delta,\delta,Q)$.
Then  $(A,\cdot_A,[-,-]_A,\Delta,\delta,P,Q)$ is a relative Poisson bialgebra if and only if
$((A,P),(A^{*},Q^{*}),-\mathcal{L}_{A}^{*}$, $\mathrm{ad}_{A}^{*},-\mathcal{L}_{A^*}^{*},\mathrm{ad}_{A^*}^{*})$
is a matched pair of relative Poisson algebras.
\end{thm}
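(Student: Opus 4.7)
The plan is to decompose both sides of the equivalence into four layers and match them one by one: (i) the commutative associative/coalgebra layer, (ii) the Lie algebra/coalgebra layer, (iii) the derivation/dual-representation layer, and (iv) the mixed compatibility layer. Each layer is then either classical or a routine dualization.

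For layer (i), the correspondence between commutative and cocommutative infinitesimal bialgebras and matched pairs of commutative associative algebras (cf.~\cite{Bai2010}) shows that condition (3) of Definition~\ref{de:GPBA}, together with the associative and coassociative structures, is equivalent to $(A, A^{*}, -\mathcal{L}_{A}^{*}, -\mathcal{L}_{A^{*}}^{*})$ being a matched pair of commutative associative algebras. For layer (ii), the correspondence between Lie bialgebras and matched pairs of Lie algebras (cf.~\cite{CP1}) gives that condition (4) of Definition~\ref{de:GPBA}, together with the Lie and Lie coalgebra structures, is equivalent to $(A, A^{*}, \mathrm{ad}_{A}^{*}, \mathrm{ad}_{A^{*}}^{*})$ being a matched pair of Lie algebras.

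For layer (iii), I invoke Corollary~\ref{cor:dualadj}: condition (5) of Definition~\ref{de:GPBA} says exactly that $(-\mathcal{L}_{A}^{*}, \mathrm{ad}_{A}^{*}, Q^{*}, A^{*})$ is a representation of $(A,P)$, which is the representation axiom required of the $A$-side in the matched pair of relative Poisson algebras (Proposition~\ref{pro:matched pairs}). Dually, via Remark~\ref{rmk:dual} and the canonical isomorphism $A \cong A^{**}$, each of Eqs.~\eqref{eq:Bi1},~\eqref{eq:Bi2},~\eqref{eq:Bi3} in condition (6) transcribes precisely to Eqs.~\eqref{eq:dualadj1},~\eqref{eq:dualadj2},~\eqref{eq:eqdualadj1} applied to $(A^{*},Q^{*})$ with dually representing map $P$; by Corollary~\ref{cor:dualadj} again this is equivalent to $(-\mathcal{L}_{A^{*}}^{*}, \mathrm{ad}_{A^{*}}^{*}, P, A)$ being a representation of $(A^{*},Q^{*})$, which is the representation axiom required of the $A^{*}$-side in the matched pair.

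For layer (iv), there remain the four compatibility equations~\eqref{eq:MP1}--\eqref{eq:MP4} of the matched pair definition, which must be matched against Eqs.~\eqref{eq:Bi4}--\eqref{eq:Bi5}. The strategy is to pair each matched-pair equation with a test element in the dual space and unpack the explicit formulas $\mu_{1}(x)a^{*}=-\mathcal{L}_{A}^{*}(x)a^{*}$, $\mu_{2}(a^{*})x=-\mathcal{L}_{A^{*}}^{*}(a^{*})x$, $\rho_{1}(x)a^{*}=\mathrm{ad}_{A}^{*}(x)a^{*}$, $\rho_{2}(a^{*})x=\mathrm{ad}_{A^{*}}^{*}(a^{*})x$, together with the dual definitions of $\Delta$ and $\delta$ from Eq.~\eqref{eq:operations on dual space2}. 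A direct computation then shows that Eq.~\eqref{eq:MP1}, viewed as an identity in $A$ with $a\in A^{*}$, transcribes to Eq.~\eqref{eq:Bi4}; the dual Eq.~\eqref{eq:MP2}, viewed as an identity in $A^{*}$, transcribes to the same Eq.~\eqref{eq:Bi4} (via the self-dual nature of the setup together with Remark~\ref{rmk:dual}). Similarly, Eqs.~\eqref{eq:MP3} and~\eqref{eq:MP4} both transcribe to Eq.~\eqref{eq:Bi5}. The main obstacle is precisely this last step: it requires careful bookkeeping of tensor slots and of signs arising from the conventions of the coadjoint and co-$\mathcal{L}$ actions, and one must verify that each pair of matched-pair equations indeed collapses to a single coalgebraic identity, using the cocommutativity of $\Delta$ and the skew-symmetry of $\delta$ to identify the two transcriptions.
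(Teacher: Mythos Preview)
Your overall decomposition into four layers is exactly the paper's approach, and layers (i)--(iii) are handled correctly. However, the pairing you assert in layer (iv) is wrong. The paper shows
\[
\text{Eq.~\eqref{eq:MP1}} \Longleftrightarrow \text{Eq.~\eqref{eq:Bi4}} \Longleftrightarrow \text{Eq.~\eqref{eq:MP4}}
\quad\text{and}\quad
\text{Eq.~\eqref{eq:MP2}} \Longleftrightarrow \text{Eq.~\eqref{eq:Bi5}} \Longleftrightarrow \text{Eq.~\eqref{eq:MP3}},
\]
not the grouping $\{\eqref{eq:MP1},\eqref{eq:MP2}\}\leftrightarrow\eqref{eq:Bi4}$ and $\{\eqref{eq:MP3},\eqref{eq:MP4}\}\leftrightarrow\eqref{eq:Bi5}$ that you claim. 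Your self-duality argument via Remark~\ref{rmk:dual} misfires: swapping $A \leftrightarrow A^{*}$ in Eq.~\eqref{eq:MP1} does give Eq.~\eqref{eq:MP2}, but passing to the dual bialgebra \emph{also interchanges} Eqs.~\eqref{eq:Bi4} and~\eqref{eq:Bi5}, because in the dual bialgebra the product $\cdot_{A^{*}}$ comes from $\Delta_A$ while the co-bracket $\delta_{A^{*}}$ comes from $[-,-]_A$ (see Eqs.~\eqref{eq:operations on dual space1}--\eqref{eq:operations on dual space2}). Concretely, the leading term of Eq.~\eqref{eq:MP2} is $\mathrm{ad}_{A}^{*}(x)(a^{*}\cdot_{A^{*}}b^{*})$; pairing with $y\in A$ gives $-\langle a^{*}\cdot_{A^{*}} b^{*}, [x,y]_{A}\rangle = -\langle \Delta([x,y]_{A}), a^{*}\otimes b^{*}\rangle$, which is the $\Delta([x,y])$ term of Eq.~\eqref{eq:Bi5}, not the $\delta(x\cdot y)$ term of Eq.~\eqref{eq:Bi4}.

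This does not break the overall equivalence --- the set correspondence $\{\eqref{eq:Bi4},\eqref{eq:Bi5}\} \Longleftrightarrow \{\eqref{eq:MP1}\text{--}\eqref{eq:MP4}\}$ still holds, just with the corrected grouping --- but the ``direct computation'' you describe would not produce the outcome you state, and the appeal to cocommutativity of $\Delta$ and skew-symmetry of $\delta$ is not what collapses the four matched-pair equations to two.
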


\begin{proof}
By \cite{CP1}, $\delta$ satisfies Eq.~(\ref{eq:LieBia}) if and
only if $(A,A^{*},\mathrm{ad}^{*}_{A},\mathrm{ad}^{*}_{A^{*}})$ is
a matched pair of Lie algebras and by \cite{Bai2010}, $\Delta$
satisfies Eq.~(\ref{AssoBia}) if and only if
$(A,A^{*},-\mathcal{L}^{*}_{A},-\mathcal{L}^{*}_{A^{*}})$ is a
matched pair of commutative associative algebras. Moreover, by
Definition~\ref{defi:dual},   $Q$ dually represents $(A,P)$ if and
only if $(-\mathcal{L}_{A}^{*}$, $\mathrm{ad}_{A}^{*}, Q^*,A^*)$
is a representation of $(A,P)$, and $P^{*}$ dually represents
$(A^{*},Q^{*})$ if and only if
$(-\mathcal{L}_{A^*}^{*},\mathrm{ad}_{A^*}^{*}, P,A)$ is a
representation of $(A^*,Q^*)$. Next we show that
\begin{center}
Eq.~(\ref{eq:MP1})$\Longleftrightarrow$ Eq.~(\ref{eq:Bi4})$\Longleftrightarrow$ Eq.~(\ref{eq:MP4}) and Eq.~(\ref{eq:MP2})$\Longleftrightarrow$ Eq.~(\ref{eq:Bi5})$\Longleftrightarrow$ Eq.~(\ref{eq:MP3})
\end{center}
in the case that $P_{1}=P,P_{2}=Q^{*}$,
$\mu_{1}=-\mathcal{L}^{*}_{A},\mu_{2}=-\mathcal{L}^{*}_{A^{*}},\rho_{1}=\mathrm{ad}^{*}_{A},\rho_{2}=\mathrm{ad}^{*}_{A^{*}}$.
As an example, we give an explicit proof for the fact that
Eq.~(\ref{eq:MP1})$\Longleftrightarrow$ Eq.~(\ref{eq:Bi4}). The
proof for the other equivalences is similar. Let $x,y\in A,
a^{*},b^{*}\in A^{*}$. Then we have
\begin{eqnarray*}
\langle \mathrm{ad}^{*}_{A^{*}}(a^{*})(x\cdot_{A} y),b^{*}\rangle&=&\langle x\cdot_{A} y,[b^{*},a^{*}]_{A^{*}}\rangle
=\langle \delta(x\cdot_{A} y),b^{*}\otimes a^{*}\rangle,\\
-\langle \mathcal{L}^{*}_{A^{*}}(\mathrm{ad}^{*}_{A}(y)a^{*})x,b^{*}\rangle&=&\langle x,b^{*}\cdot_{A^{*}}(\mathrm{ad}^{*}_{A}(y)a^{*})\rangle=\langle x,\Delta^{*}(\mathrm{id}\otimes\mathrm{ad}^{*}_{A}(y))(b^{*}\otimes a^{*})\rangle\\
 &=&-\langle (\mathrm{id}\otimes\mathrm{ad}_{A}(y))\Delta(x),b^{*}\otimes a^{*}\rangle,\\
 -\langle x\cdot\mathrm{ad}^{*}_{A^{*}}(a^{*})y,b^{*}\rangle&=&\langle \mathrm{ad}^{*}_{A^{*}}(a^{*})y,\mathcal{L}^{*}_{A}(x)b^{*}\rangle
 =\langle y,[\mathcal{L}^{*}_{A}(x)b^{*},a^{*}]_{A^{*}}\rangle\\
 &=&\langle y,\delta^{*}(\mathcal{L}^{*}_{A}(x)\otimes \mathrm{id})(b^{*}\otimes a^{*})\rangle=-\langle (\mathcal{L}_{A}(x)\otimes \mathrm{id})\delta(y),b^{*}\otimes a^{*}\rangle,\\
\langle \mathcal{L}^{*}_{A^{*}}(Q^{*}(a^{*}))(x\cdot_{A} y),b^{*}\rangle&=&-\langle x\cdot_{A} y,b^{*}\cdot_{A^{*}} Q^{*}(a^{*})\rangle=-\langle x\cdot_{A} y,\Delta^{*}(\mathrm{id}\otimes Q^{*})(b^{*}\otimes a^{*})\rangle\\
    &=&-\langle (\mathrm{id}\otimes Q)\Delta(x\cdot_{A} y),b^{*}\otimes a^{*}\rangle.
 \end{eqnarray*}
Thus Eq.~(\ref{eq:MP1}) holds if and only if  Eq.~(\ref{eq:Bi4}) holds. Therefore the conclusion holds.
\end{proof}

Combining Theorems~\ref{thm:equivalencestandard Manin triple and matched pair} and~\ref{thm:bia-match}, we have the following conclusion.

\begin{cor}\label{thm:equivalence3}
Let $(A,\cdot_{A},[-,-]_{A},P)$ be a relative Poisson algebra.
Suppose that there is a relative Poisson algebra structure
$(A^*, \cdot_{A^*}, [-,-]_{A^*}, Q^*)$ on the dual space $A^*$ which is given by
a relative Poisson coalgebra  $(A,\Delta,\delta,Q)$.
Then the following conditions are equivalent:
\begin{enumerate}
\item There is a   Manin triple of relative Poisson
algebras $((A\bowtie A^{*},P+Q^{*},\mathcal{B}_{d}),(A$, $P)$,
$(A^{*},Q^{*}))$;

\item  $((A,P),(A^{*},Q^{*}),-\mathcal{L}_{A}^{*},\mathrm{ad}_{A}^{*},-\mathcal{L}_{A^{*}}^{*},\mathrm{ad}_{A^{*}}^{*})$ is a matched pair of relative Poisson algebras;

\item  $(A,\cdot_A,[-,-]_A,\Delta,\delta,P,Q)$ is a relative Poisson bialgebra.
\end{enumerate}
\end{cor}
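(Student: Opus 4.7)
The plan is to deduce this corollary directly by chaining the two equivalences that were already established in the body of the section, since together they close the triangle of implications. No new computation should be necessary.

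First, I would observe that Theorem~\ref{thm:equivalencestandard Manin triple and matched pair} gives precisely the equivalence (1)$\Longleftrightarrow$(2): a Manin triple structure $((A\bowtie A^{*},P+Q^{*},\mathcal{B}_{d}),(A,P),(A^{*},Q^{*}))$ in the sense of Definition~\ref{defi:SMT} exists if and only if $((A,P),(A^{*},Q^{*}),-\mathcal{L}_{A}^{*},\mathrm{ad}_{A}^{*},-\mathcal{L}_{A^*}^{*},\mathrm{ad}_{A^*}^{*})$ is a matched pair of relative Poisson algebras. The hypothesis that $(A^{*},\cdot_{A^{*}},[-,-]_{A^{*}},Q^{*})$ arises as the dual of the coalgebra $(A,\Delta,\delta,Q)$ is consistent with the ingredients used there.

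Second, I would invoke Theorem~\ref{thm:bia-match}, which asserts the equivalence (2)$\Longleftrightarrow$(3): under exactly the same hypotheses, $(A,\cdot_{A},[-,-]_{A},\Delta,\delta,P,Q)$ is a relative Poisson bialgebra if and only if the same matched pair of relative Poisson algebras exists. Combining these two biconditionals transitively yields (1)$\Longleftrightarrow$(2)$\Longleftrightarrow$(3), which is exactly the statement of the corollary.

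Since both main ingredients are already proved, there is no genuine obstacle here; the only thing to check is that the notational conventions, in particular the way the dual relative Poisson algebra structure $(A^{*},\cdot_{A^{*}},[-,-]_{A^{*}},Q^{*})$ is built from the coalgebra $(A,\Delta,\delta,Q)$ (as in Remark~\ref{rmk:dual} and Eqs.~(\ref{eq:operations on dual space1})--(\ref{eq:operations on dual space2})), matches the data used in both theorems. Once this is noted, a one-line argument assembling the two theorems suffices, and the proof could be presented essentially as: \emph{By Theorem~\ref{thm:equivalencestandard Manin triple and matched pair}, (1) and (2) are equivalent, and by Theorem~\ref{thm:bia-match}, (2) and (3) are equivalent; hence all three conditions are equivalent.}
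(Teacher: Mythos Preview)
Your proposal is correct and matches the paper's own argument exactly: the corollary is stated immediately after Theorems~\ref{thm:equivalencestandard Manin triple and matched pair} and~\ref{thm:bia-match} with the one-line justification ``Combining Theorems~\ref{thm:equivalencestandard Manin triple and matched pair} and~\ref{thm:bia-match}, we have the following conclusion.'' No additional computation is carried out, just as you anticipated.
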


\section{Coboundary relative Poisson bialgebras}

We study the coboundary relative Poisson bialgebras, which lead to
the introduction of the relative Poisson Yang-Baxter equation
(RPYBE) in a relative Poisson algebra. In particular, an
antisymmetric solution of the RPYBE in a relative Poisson algebra
gives a coboundary relative Poisson bialgebra. We also introduce
the notion of $\mathcal{O}$-operators of relative Poisson algebras
to interpret the RPYBE and an $\mathcal O$-operator gives an
antisymmetric solution of the RPYBE in a semi-direct product
relative Poisson algebra. Finally, the notion of relative
pre-Poisson algebras is introduced to construct $\mathcal
O$-operators of their sub-adjacent relative Poisson algebras.

%We also use commutative associative algebras with bijective derivations and differential Zinbiel algebras to construct generalized pre-Poisson algebras.

\subsection{Coboundary relative Poisson bialgebras}\

Recall that  a commutative and cocommutative infinitesimal
bialgebra $(A,\cdot,\Delta)$ is called \textbf{co-boundary} if
there exists an $r\in A\otimes A$ such that
\begin{equation}\label{AssoCob}
\Delta(x)=(\mathrm{id}\otimes\mathcal{L}(x)-\mathcal{L}(x)\otimes \mathrm{id})r,\;\;\forall x\in A.
\end{equation}
A Lie bialgebra $(A,[-,-],\delta)$ is called \textbf{coboundary }if there exists an $r\in A\otimes A$ such that
\begin{equation}\label{eq:LieCob}
\delta(x)=(\mathrm{ad}(x)\otimes \mathrm{id}+\mathrm{id}\otimes\mathrm{ad}(x))r, \;\;\forall x\in A.
\end{equation}

Therefore they motivate us to give the following notion.
\begin{defi}
A relative Poisson bialgebra $(A,\cdot,[-,-],\Delta,\delta,P,Q)$ is called \textbf{coboundary} if there exists an $r\in A\otimes A$ such that Eqs.~(\ref{AssoCob}) and (\ref{eq:LieCob}) hold.
\end{defi}

%First, we recall some results of coboundary (commutative and cocommutative) infinitesimal bialgebras (\cite{Bai2010}) and coboundary Lie bialgebras (\cite{CP1}).

Let $A$ be a vector space with a  bilinear operation
$\diamond:A\otimes A\rightarrow A$. Let $r=\sum
\limits_{i}a_{i}\otimes
b_{i}\in A\otimes A$. Set
$$r_{12}\diamond r_{13}=\sum_{i,j}a_{i}\diamond a_{j}\otimes b_{i}\otimes b_{j},\;\; r_{12}\diamond r_{23}=\sum_{i,j}a_{i}\otimes b_{i}\diamond a_{j}\otimes b_{j},\;\; r_{13}\diamond r_{23}=\sum_{i,j}a_{i}\otimes  a_{j}\otimes b_{i}\diamond b_{j}.$$

Let $(A,\cdot)$ be a commutative associative algebra and
$\Delta:A\rightarrow A\otimes A$ be a linear map defined by
Eq.~(\ref{AssoCob}). Then $\Delta$ satisfies Eq.~(\ref{AssoBia})
automatically. Moreover, by \cite{Bai2010}, $\Delta$ makes
$(A,\Delta)$ into a cocommutative coassociative coalgebra such
that $(A,\cdot,\Delta)$ is a commutative and cocommutative
infinitesimal bialgebra if and only if for all $x\in A,$
 \begin{equation}\label{eq:AYBE1}
 (\mathrm{id}\otimes\mathcal{L}(x)-\mathcal{L}(x)\otimes \mathrm{id})(r+\tau(r))=0,
 \end{equation}
  \begin{equation}\label{eq:AYBE2}
 (\mathrm{id}\otimes \mathrm{id}\otimes\mathcal{L}(x)-\mathcal{L}(x)\otimes \mathrm{id}\otimes \mathrm{id})\textbf{A}(r)=0,
 \end{equation}
where
\begin{equation}\label{eq:AYBE}
\textbf{A}(r)=r_{12}\cdot r_{13}-r_{12}\cdot r_{23}+r_{13}\cdot
r_{23}.
\end{equation}
The equation $\textbf{A}(r)=0$ is called \textbf{associative
Yang-Baxter equation (AYBE)} in $(A,\cdot)$.
%Then $\Delta$ defined by Eq.~(\ref{AssoCob}) makes $(A,\cdot,\Delta)$ into a commutative and cocommutative infinitesimal bialgebra if $r$ is an antisymmetric solution of AYBE.

Let $(A,[-,-])$ be a Lie algebra and $\delta:A\rightarrow
A\otimes A$ be a linear map defined by Eq.~(\ref{eq:LieCob}). Then
$\delta$ satisfies Eq.~(\ref{eq:LieBia}) automatically. Moreover,
by \cite{CP1}, $\delta$ makes $(A,\delta)$ into a Lie coalgebra
such that $(A,[-,-],\delta)$ is a Lie bialgebra if and only if for
all $x\in A,$
\begin{equation}\label{eq:CYBE1}
(\mathrm{ad}(x)\otimes \mathrm{id}+\mathrm{id}\otimes\mathrm{ad}(x))(r+\tau(r))=0,
\end{equation}
\begin{equation}\label{eq:CYBE2}
(\mathrm{ad}(x)\otimes \mathrm{id}\otimes \mathrm{id}+\mathrm{id}\otimes\mathrm{ad}(x)\otimes \mathrm{id}+\mathrm{id}\otimes \mathrm{id}\otimes\mathrm{ad}(x))\textbf{C}(r)=0,
\end{equation}
where
\begin{equation}\label{eq:CYBE}
\textbf{C}(r)=[r_{12},r_{13}]+[r_{12},r_{23}]+[r_{13},r_{23}].
\end{equation}
The equation $\textbf{C}(r)=0$ is called \textbf{classical
Yang-Baxter equation (CYBE)} in $(A,[-,-])$.
%Then $\delta$ defined by Eq.~(\ref{eq:LieCob}) makes $(A,[-,-],\delta)$ into a Lie bialgebra if $r$ is an antisymmetric solution of CYBE.

\begin{pro}\label{pro:RPYBE}
Let $(A,\cdot,[-,-],P)$ be a relative Poisson algebra which is
dually represented by $Q$. Let $r=\sum\limits_{i}a_{i}\otimes
b_{i}\in A\otimes A$ and $\Delta,\delta:A\rightarrow A\otimes A$
be linear maps defined by Eqs.~(\ref{AssoCob}) and
(\ref{eq:LieCob}) respectively. Define $\textbf{A}(r)$ and
$\textbf{C}(r)$
by Eqs.~(\ref{eq:AYBE}) and (\ref{eq:CYBE}) respectively. %\textcolor{blue}{Then}
\begin{enumerate}
\item Eq.~(\ref{eq:Co1}) holds if and only if the following equation holds:
\begin{equation}\label{eq:tr1}
(\mathrm{id}\otimes\mathcal{L}(x))(\mathrm{id}\otimes P-Q\otimes \mathrm{id})r+(\mathcal{L}(x)\otimes \mathrm{id})(\mathrm{id}\otimes Q-P\otimes \mathrm{id})r=0,\;\;\forall x\in A.
\end{equation}

\item Eq.~(\ref{eq:Co2}) holds if and only if the following equation holds:
\begin{equation}\label{eq:tr2}
(\mathrm{id}\otimes\mathrm{ad}(x))(\mathrm{id}\otimes P-Q\otimes \mathrm{id})r-(\mathrm{ad}(x)\otimes \mathrm{id})(\mathrm{id}\otimes Q-P\otimes \mathrm{id})r=0,\;\;\forall x\in A.
\end{equation}

\item Eq.~(\ref{eq:Co3}) holds if and only if the following
equation holds: {\small
\begin{eqnarray}
&&(\mathrm{ad}(x)\otimes \mathrm{id}\otimes \mathrm{id}+Q\otimes
\mathrm{id}\otimes\mathcal{L}(x))\textbf{A}(r)+(\mathrm{id}\otimes \mathrm{id}\otimes\mathcal{L}(x)-\mathrm{id}\otimes\mathcal{L}(x)\otimes \mathrm{id})\textbf{C}(r)\nonumber\\
&&\hspace{0.0cm}+\sum_{j}(\mathrm{ad}(a_{j})\otimes \mathrm{id})(\mathcal{L}(x)\otimes \mathrm{id}-\mathrm{id}\otimes\mathcal{L}(x))(r+\tau(r))\otimes b_{j}+\sum_{j}(\mathrm{id}\otimes\mathcal{L}(x\cdot a_{j}))(Q\otimes \mathrm{id}-\mathrm{id}\otimes P)r\otimes b_{j}\nonumber\\
&&\hspace{0.0cm}+\sum_{j}(\mathrm{id}\otimes
\mathrm{id}\otimes\mathcal{L}(x\cdot
b_{j}))(\mathrm{id}\otimes\tau)((\mathrm{id}\otimes P-Q\otimes
\mathrm{id})r\otimes a_{j})=0,\;\;\forall x\in A.\label{eq:tr3}
\end{eqnarray}
}

\item Eq.~(\ref{eq:Bi1}) holds if and only if the following equation holds:
\begin{equation}\label{eq:tr4}
(\mathrm{id}\otimes\mathcal{L}(x)-\mathcal{L}(x)\otimes \mathrm{id})(\mathrm{id}\otimes Q-P\otimes \mathrm{id})r=0,\;\;\forall x\in A.
\end{equation}

\item Eq.~(\ref{eq:Bi2}) holds if and only if the following equation holds:
\begin{equation}\label{eq:tr5}
(\mathrm{ad}(x)\otimes \mathrm{id}+\mathrm{id}\otimes\mathrm{ad}(x))(\mathrm{id}\otimes Q-P\otimes \mathrm{id})r=0,\;\;\forall x\in A.
\end{equation}

\item Eq.~(\ref{eq:Bi3}) holds if and only if the following equation holds:
\begin{equation}\label{eq:tr6}
(\mathrm{id}\otimes \mathrm{id}\otimes\mathcal{L}((P+Q)x))\textbf{A}(r)=0,\;\;\forall x\in A.
\end{equation}

\item Eq.~(\ref{eq:Bi4}) holds if and only if the following equation holds:
\begin{equation}\label{eq:tr7}
(\mathcal{L}(x\cdot y)\otimes \mathrm{id})(\mathrm{id}\otimes Q-P\otimes \mathrm{id})r=0,\;\;\forall x,y\in A.
\end{equation}

\item Eq.~(\ref{eq:Bi5}) holds automatically.
\end{enumerate}
\end{pro}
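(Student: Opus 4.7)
The plan is to handle all eight parts by directly substituting the coboundary formulas
$\Delta(x)=(\mathrm{id}\otimes\mathcal{L}(x)-\mathcal{L}(x)\otimes\mathrm{id})r$ and
$\delta(x)=(\mathrm{ad}(x)\otimes\mathrm{id}+\mathrm{id}\otimes\mathrm{ad}(x))r$ into each side of the target equation, expanding with $r=\sum_i a_i\otimes b_i$, and then rewriting the resulting sums by means of the relative Poisson axioms (the Leibniz rule Eq.~(\ref{eq:GLR}) and the derivation properties Eqs.~(\ref{eq:DCAA})-(\ref{eq:DLA})) together with the dual-representation identities Eqs.~(\ref{eq:dualadj1})-(\ref{eq:eqdualadj1}) that $Q$ satisfies. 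In every case, the strategy is to collect the terms into ``transport'' expressions involving the difference operators $\mathrm{id}\otimes P-Q\otimes\mathrm{id}$ and $\mathrm{id}\otimes Q-P\otimes\mathrm{id}$, which are exactly what appears on the right-hand sides.

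For the derivation-type equations (1) and (2), I would expand $\Delta(Q(x))$ and $(Q\otimes\mathrm{id}+\mathrm{id}\otimes Q)\Delta(x)$, and then use $Q(y\cdot z)=y\cdot Q(z)-P(y)\cdot z$ (from Eq.~(\ref{eq:dualadj1})) and $Q([y,z])=[y,Q(z)]-[P(y),z]$ (from Eq.~(\ref{eq:dualadj2})) to rewrite $Q(x\cdot a_i)$, $Q(x\cdot b_i)$, $Q([x,a_i])$, $Q([x,b_i])$. The terms not involving $P$ cancel, leaving exactly Eq.~(\ref{eq:tr1}) and Eq.~(\ref{eq:tr2}). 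The same mechanism gives (4)-(7): for (4) and (5) one uses that $P$ is a derivation to push $P$ across $\Delta(x)$ and $\delta(x)$; for (6) one substitutes $\Delta$ twice into $(\Delta\otimes\mathrm{id})\Delta(P+Q)$ and collapses using the cocommutativity already forced by (\ref{eq:AYBE1})-type manipulations, producing (\ref{eq:tr6}); for (7) one expands $\delta(x\cdot y)$ plus the four right-hand terms of Eq.~(\ref{eq:Bi4}), where the Leibniz rule makes everything cancel except for a single $(\mathcal{L}(x\cdot y)\otimes\mathrm{id})(\mathrm{id}\otimes Q-P\otimes\mathrm{id})r$ contribution.

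Part (3) will be the main obstacle. Here the four triple-tensor expressions $(\mathrm{id}\otimes\Delta)\delta(x)$, $(\delta\otimes\mathrm{id})\Delta(x)$, $(\tau\otimes\mathrm{id})(\mathrm{id}\otimes\delta)\Delta(x)$ and $(Q\otimes\mathrm{id}\otimes\mathrm{id})(\Delta\otimes\mathrm{id})\Delta(x)$ all unfold into substantial sums of triple products built from $r_{12},r_{13},r_{23}$ sandwiched with $\mathcal{L}(x),\mathrm{ad}(x)$. I would arrange the expansion so that the purely ``multiplicative'' cross-terms combine into $\textbf{A}(r)$-contributions flanked by $\mathrm{ad}(x)\otimes\mathrm{id}\otimes\mathrm{id}$ and $Q\otimes\mathrm{id}\otimes\mathcal{L}(x)$, the mixed multiplicative-Lie brackets combine into $\textbf{C}(r)$-contributions flanked by $\mathrm{id}\otimes\mathrm{id}\otimes\mathcal{L}(x)-\mathrm{id}\otimes\mathcal{L}(x)\otimes\mathrm{id}$, and the residual terms (which come from applying the Leibniz rule Eq.~(\ref{eq:GLR}) inside the triple tensor) reorganize into the three ``symmetry defect'' expressions involving $r+\tau(r)$ and $\mathrm{id}\otimes P-Q\otimes\mathrm{id}$ described in the last three lines of Eq.~(\ref{eq:tr3}). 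The key bookkeeping devices are the compatibility identity Eq.~(\ref{eq:CompStru}) rewritten as $\mathcal{L}([x,y])=[\mathcal{L}(x),\mathrm{ad}(y)]+\mathcal{L}(x\cdot P(y))$, and Eq.~(\ref{eq:eqdualadj1}) which disposes of the triple-product term $(P+Q)(x\cdot a_j\cdot b_i)$ that would otherwise appear.

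Finally, for (8) I expect Eq.~(\ref{eq:Bi5}) to collapse to $0=0$ without any constraint on $r$. I would substitute the coboundary formulas, expand $\Delta([x,y])$ using the identity $\mathcal{L}([x,y])=[\mathcal{L}(x),\mathrm{ad}(y)]+\mathcal{L}(x\cdot P(y))$ obtained above, so that $\Delta([x,y])+\Delta(P(x)\cdot y)$ reorganizes as the commutator of $\mathcal{L}(x)\otimes\mathrm{id}$ or $\mathrm{id}\otimes\mathcal{L}(x)$ with $\mathrm{ad}(y)\otimes\mathrm{id}$ or $\mathrm{id}\otimes\mathrm{ad}(y)$ acting on $r$, plus symmetric correction terms. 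These are precisely the four remaining terms of Eq.~(\ref{eq:Bi5}) with the signs dictated by the Leibniz rule, so the whole expression vanishes term-by-term. The only subtlety is to keep track carefully of the two occurrences of $\mathcal{L}(x\cdot P(y))$ that arise when Eq.~(\ref{eq:CompStru}) is applied on the left and right tensor factors, which cancel against each other by symmetry between $\mathcal{L}(y)\otimes\mathrm{id}$ and $\mathrm{id}\otimes\mathcal{L}(y)$ contributions.
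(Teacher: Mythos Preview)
Your overall approach is the same as the paper's: direct substitution of the coboundary formulas, expansion in terms of $r=\sum_i a_i\otimes b_i$, and simplification using the relative Poisson axioms together with the identities coming from $Q$ dually representing $(A,P)$. The paper organizes part~(3) by splitting the expanded expression into three blocks $W(1),W(2),W(3)$, each of which is reduced separately using Eqs.~(\ref{eq:dualadj3}) and~(\ref{eq:GLR}) to produce respectively the $\mathbf A(r)$-, $\mathbf C(r)$-, and residual defect terms; your plan is equivalent, though you phrase the key identity as the operator form of Eq.~(\ref{eq:CompStru}) rather than invoking Eq.~(\ref{eq:GLR}) and Eq.~(\ref{eq:dualadj3}) directly.

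One correction: in part~(6) the collapse does \emph{not} come from cocommutativity or from an $(r+\tau(r))$-type argument as you suggest. After expanding $(\Delta\otimes\mathrm{id})\Delta((P+Q)x)$ you obtain terms of the form $(((P+Q)x)\cdot a_i)\cdot a_j\otimes b_j\otimes b_i$ and $a_j\otimes(((P+Q)x)\cdot a_i)\cdot b_j\otimes b_i$, and these vanish only because of Eq.~(\ref{eq:eqdualadj2}), namely $u\cdot v\cdot(P+Q)(w)=0$, which is a consequence of $Q$ dually representing $(A,P)$ (Corollary~\ref{cor:dualadj2}). The same identity is then used once more (adding a zero term) to complete the three-term pattern of $\mathbf A(r)$ and arrive at Eq.~(\ref{eq:tr6}). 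With that adjustment, your outline matches the paper's proof.
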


\begin{proof} Let $x,y\in A$. (1) Substituting Eq.~(\ref{AssoCob})
into Eq.~(\ref{eq:tr1}), we have
\begin{small}
\begin{eqnarray*}
&&\Delta Q(x)-(Q\otimes \mathrm{id}+\mathrm{id}\otimes Q)\Delta(x)\\
    &&=\sum_{i}(a_{i}\otimes Q(x)\cdot b_{i}-Q(x)\cdot a_{i}\otimes b_{i}-Q(a_{i})\otimes x\cdot b_{i}
    +Q(x\cdot a_{i})\otimes b_{i}%\\
    %&&\mbox{}\hspace{1cm}
    -a_{i}\otimes Q(x\cdot b_{i})+x\cdot a_{i}\otimes  Q(b_{i}))\\
    &&\stackrel{(\ref{eq:dualadj1})}{=}\sum_{i}(a_{i}\otimes P(b_{i})\cdot x-P(a_{i})\cdot x\otimes b_{i}-Q(a_{i})\otimes x\cdot b_{i}+x\cdot a_{i}\otimes Q(b_{i}))\\
    &&=(\mathrm{id}\otimes\mathcal{L}(x))(\mathrm{id}\otimes P-Q\otimes \mathrm{id})r+(\mathcal{L}(x)\otimes \mathrm{id})(\mathrm{id}\otimes Q-P\otimes \mathrm{id})r.
\end{eqnarray*}
\end{small}
Thus Eq.~(\ref{eq:Co1}) holds if and only if Eq.~(\ref{eq:tr1})
holds.

(2) By a similar proof as the one of (1), Eq.~(\ref{eq:Co2}) holds if and
only if Eq.~(\ref{eq:tr1}) holds.

(3) Substituting Eqs.~(\ref{AssoCob}) and~(\ref{eq:LieCob}) into
Eq.~(\ref{eq:Co3}), we have {\small \begin{eqnarray*}
    &&(\mathrm{id}\otimes\Delta)\delta(x)-(\delta\otimes \mathrm{id})\Delta(x)-(\tau\otimes \mathrm{id})(\mathrm{id}\otimes\delta)\Delta(x)-(Q\otimes \mathrm{id}\otimes \mathrm{id})(\Delta\otimes \mathrm{id})\Delta(x)\\
    &&=\sum_{i,j}([x,a_{i}]\otimes a_{j}\otimes b_{i}\cdot b_{j}
    -[x,a_{i}]\otimes b_{i}\cdot a_{j}\otimes b_{j}
    +a_{i}\otimes a_{j}\otimes [x,b_{i}]\cdot b_{j}-a_{i}\otimes [x,b_{i}]\cdot a_{j}\otimes
    b_{j}\\
    &&\hspace{0.2cm}-[a_{i},a_{j}]\otimes b_{j}\otimes x\cdot b_{i}
    -a_{j}\otimes [a_{i},b_{j}]\otimes x\cdot b_{i}
    +[x\cdot a_{i},a_{j}]\otimes b_{j}\otimes b_{i}
    +a_{j}\otimes [x\cdot a_{i},b_{j}]\otimes b_{i}\\
    &&\hspace{0.2cm}-[x\cdot b_{i},a_{j}]\otimes a_{i}\otimes b_{j}-a_{j}\otimes a_{i}\otimes [x\cdot b_{i},b_{j}]
    +[b_{i},a_{j}]\otimes x\cdot a_{i}\otimes b_{j}
    +a_{j}\otimes x\cdot a_{i}\otimes [b_{i},b_{j}]\\
    &&\hspace{0.2cm}-Q(a_{j})\otimes a_{i}\cdot b_{j}\otimes x\cdot b_{i}+Q(a_{i}\cdot a_{j})\otimes b_{j}\otimes x\cdot
b_{i}+Q(a_{j})\otimes x\cdot a_{i}\cdot b_{j}\otimes b_{i}\\
    &&\hspace{0.2cm}-Q(x\cdot a_{i}\cdot a_{j})\otimes b_{j}\otimes
    b_{i})\\
    &&=W(1)+W(2)+W(3),
\end{eqnarray*}
} where {\small \begin{eqnarray*} W(1):&=&
\sum_{i,j}([x,a_{i}]\otimes a_{j}\otimes b_{i}\cdot b_{j}
    -[x,a_{i}]\otimes b_{i}\cdot a_{j}\otimes b_{j}+[x\cdot a_{i},a_{j}]\otimes b_{j}\otimes b_{i}\\
    &\mbox{}& -[x\cdot b_{i},a_{j}]\otimes a_{i}\otimes b_{j}-Q(x\cdot a_{i}\cdot a_{j})\otimes b_{j}\otimes b_{i})\\
&\stackrel{(\ref{eq:AYBE})}{=}&(\mathrm{ad}(x)\otimes \mathrm{id}\otimes \mathrm{id})\textbf{A}(r)+\sum_{i,j}(-[x,a_{i}\cdot a_{j}]\otimes b_{i}\otimes b_{j}+[x\cdot a_{i},a_{j}]\otimes b_{j}\otimes b_{i}\\
&\mbox{}&-[x\cdot b_{i},a_{j}]\otimes a_{i}\otimes b_{j}-Q(x\cdot a_{i}\cdot a_{j})\otimes b_{j}\otimes b_{i})\\
&\stackrel{(\ref{eq:dualadj3})}{=}&(\mathrm{ad}(x)\otimes \mathrm{id}\otimes \mathrm{id})\textbf{A}(r)+\sum_{i,j}([a_{j},x\cdot a_{i}]\otimes b_{i}\otimes b_{j}+[a_{j},x\cdot b_{i}]\otimes a_{i}\otimes b_{j})\\
&=&(\mathrm{ad}(x)\otimes \mathrm{id}\otimes
\mathrm{id})\textbf{A}(r)+\sum_{j}(\mathrm{ad}(a_{j})\otimes
\mathrm{id})(\mathcal{L}(x)\otimes \mathrm{id})(r+\tau(r))\otimes
b_{j},
\end{eqnarray*}
\begin{eqnarray*}
W(2): &=&\sum_{i,j}(-a_{i}\otimes [x,b_{i}]\cdot a_{j}\otimes
b_{j}+a_{j}\otimes [x\cdot a_{i},b_{j}]\otimes
b_{i}+[b_{i},a_{j}]\otimes x\cdot a_{i}\otimes b_{j}\\
  &\mbox{}&  +a_{j}\otimes x\cdot a_{i}\otimes [b_{i},b_{j}]+Q(a_{j})\otimes x\cdot a_{i}\cdot b_{j}\otimes b_{i})\\
%&=&\sum_{i,j}(-a_{i}\otimes[x,b_{i}]\cdot a_{j}\otimes b_{j}+a_{j}\otimes[x\cdot a_{i},b_{j}]\otimes b_{i}\\
%&-&(\mathrm{ad}(a_{j})\otimes\mathcal{L}(x))(b_{i}\otimes a_{i})\otimes b_{j}+a_{j}\otimes x\cdot a_{i}\otimes[b_{i},b_{j}])\\
&=&\sum_{i,j}(-a_{i}\otimes[x,b_{i}]\cdot a_{j}\otimes b_{j}+a_{j}\otimes[x\cdot a_{i},b_{j}]\otimes b_{i}+a_{j}\otimes x\cdot a_{i}\otimes[b_{i},b_{j}]\\
&\mbox{}&+[a_{j},a_{i}]\otimes x\cdot b_{i}\otimes b_{j}+Q(a_{j})\otimes x\cdot a_{i}\cdot b_{j}\otimes b_{i})-\sum_{j}(\mathrm{ad}(a_{j})\otimes\mathcal{L}(x))(r+\tau(r))\otimes b_{j}\\
&\stackrel{(\ref{eq:CYBE})}{=}&-(\mathrm{id}\otimes\mathcal{L}(x)\otimes
\mathrm{id})\textbf{C}(r)+ \sum_{i,j}(-a_{i}\otimes[x,b_{i}]\cdot
a_{j}\otimes b_{j}+a_{i}\otimes [x\cdot a_{j},b_{i}]\otimes
b_{j}\\
&\mbox{}&+a_{i}\otimes x\cdot[b_{i},a_{j}]\otimes
b_{j}+Q(a_{j})\otimes x\cdot a_{i}\cdot b_{j}\otimes b_{i})
-\sum_{j}(\mathrm{ad}(a_{j})\otimes\mathcal{L}(x))(r+\tau(r))\otimes b_{j}\\
&\stackrel{(\ref{eq:GLR})}{=}&-(\mathrm{id}\otimes\mathcal{L}(x)\otimes
\mathrm{id})\textbf{C}(r)+ \sum_{i,j}(-a_{i}\otimes x\cdot
a_{j}\cdot P(b_{i})\otimes
b_{j}+Q(a_{j})\otimes x\cdot a_{i}\cdot b_{j}\otimes b_{i})\\
&\mbox{}&-\sum_{j}(\mathrm{ad}(a_{j})\otimes\mathcal{L}(x))(r+\tau(r))\otimes
b_{j}\\
&=&-(\mathrm{id}\otimes\mathcal{L}(x)\otimes
\mathrm{id})\textbf{C}(r)+\sum_{j}(\mathrm{id}\otimes\mathcal{L}(x\cdot
a_{j})\otimes \mathrm{id})((Q\otimes
\mathrm{id}-\mathrm{id}\otimes P)r\otimes
b_{j})\\
&\mbox{}&-\sum_{j}(\mathrm{ad}(a_{j})\otimes\mathcal{L}(x))(r+\tau(r))\otimes
b_{j},
\end{eqnarray*}
\begin{eqnarray*}
W(3):&=&\sum_{i,j}(a_{i}\otimes a_{j}\otimes [x,b_{i}]\cdot
b_{j}-[a_{i},a_{j}]\otimes b_{j}\otimes x\cdot b_{i}-a_{j}\otimes
[a_{i},b_{j}]\otimes x\cdot b_{i}\\
    &&-a_{j}\otimes a_{i}\otimes [x\cdot b_{i},b_{j}]-Q(a_{j})\otimes a_{i}\cdot b_{j}\otimes x\cdot b_{i}
    +Q(a_{i}\cdot a_{j})\otimes b_{j}\otimes x\cdot b_{i})\\
&\stackrel{(\ref{eq:CYBE})}{=}&(\mathrm{id}\otimes\mathrm{id}\otimes\mathcal{L}(x))\textbf{C}(r)
+\sum_{i,j}(-a_{i}\otimes a_{j}\otimes x\cdot[b_{i},b_{j}]+a_{i}\otimes a_{j}\otimes[x,b_{i}]\cdot b_{j}\\
&&-a_{i}\otimes a_{j}\otimes[x\cdot b_{j},b_{i}]-Q(a_{j})\otimes
a_{i}\cdot b_{j}\otimes x\cdot b_{i}
    +Q(a_{i}\cdot a_{j})\otimes b_{j}\otimes x\cdot b_{i})\\
&\stackrel{(\ref{eq:GLR})}{=}&(\mathrm{id}\otimes\mathrm{id}\otimes\mathcal{L}(x))\textbf{C}(r)+\sum_{i,j}(a_{i}\otimes
a_{j}\otimes x\cdot b_{j}\cdot P(b_{i})-Q(a_{j})\otimes a_{i}\cdot
b_{j}\otimes x\cdot b_{i}\\
&&    +Q(a_{i}\cdot a_{j})\otimes b_{j}\otimes x\cdot b_{i})\\
&\stackrel{(\ref{eq:AYBE})}{=}&
(\mathrm{id}\otimes\mathrm{id}\otimes\mathcal{L}(x))\textbf{C}(r)+\sum_{i,j}(a_{i}\otimes
a_{j}\otimes x\cdot b_{j}\cdot P(b_{i})-Q(a_{i})\otimes
a_{j}\otimes x\cdot b_{i}\cdot b_{j})\\
&&+ (Q\otimes \mathrm{id}\otimes\mathcal{L}(x))\textbf{A}(r)\\
&=&(\mathrm{id}\otimes\mathrm{id}\otimes\mathcal{L}(x))\textbf{C}(r)+(Q\otimes
\mathrm{id}\otimes\mathcal{L}(x))\textbf{A}(r)
\\
&&+\sum_{j}(\mathrm{id}\otimes
\mathrm{id}\otimes\mathcal{L}(x\cdot
b_{j}))(\mathrm{id}\otimes\tau)((\mathrm{id}\otimes P-Q\otimes
\mathrm{id})r\otimes a_{j}).
\end{eqnarray*}
} Thus Eq.~(\ref{eq:Co3}) holds if and only if  Eq.~(\ref{eq:tr3})
holds.

(4) By a similar proof as the one of (1), Eq.~(\ref{eq:Bi1}) holds if and
only if Eq.~(\ref{eq:tr4}) holds.

(5) By a similar proof as of the one (1), Eq.~(\ref{eq:Bi2}) holds if and
only if Eq.~(\ref{eq:tr5}) holds.

(6) Substituting Eq.~(\ref{AssoCob}) into  Eq.~(\ref{eq:Bi3}), we have
\begin{small}
\begin{eqnarray*}
    &&(\Delta\otimes \mathrm{id})\Delta(P+Q)(x)\\
    &&=\sum_{i,j}(a_{j}\otimes a_{i}\cdot b_{j}\otimes ((P+Q)x)\cdot b_{i}-a_{i}\cdot a_{j}\otimes b_{j}\otimes ((P+Q)x)\cdot b_{i}\\
    &&\hspace{1cm}+(((P+Q)x)\cdot a_i)\cdot a_j\otimes b_j\otimes b_i-a_j\otimes (((P+Q)x)\cdot a_i)\cdot b_j\otimes b_i) \\
    &&\stackrel{(\ref{eq:eqdualadj2})}{=}\sum_{i,j}(a_{j}\otimes a_{i}\cdot b_{j}\otimes ((P+Q)x)\cdot b_{i}-a_{i}\cdot a_{j}\otimes b_{j}\otimes ((P+Q)x)\cdot b_{i}\\
    &&\hspace{1cm}-a_{i}\otimes a_{j}\otimes((P+Q)x)\cdot b_{i}\cdot b_{j})\\
    &&=-(\mathrm{id}\otimes \mathrm{id}\otimes\mathcal{L}((P+Q)x))\textbf{A}(r).
\end{eqnarray*}
\end{small}
Thus Eq.~(\ref{eq:Bi3}) holds if and only if Eq.~(\ref{eq:tr6}) holds.

(7) Substituting Eqs.~(\ref{AssoCob}) and~(\ref{eq:LieCob}) into
Eq.~(\ref{eq:Bi4}),
we have {\small \begin{eqnarray*}
    &&\delta(x\cdot y)-(\mathrm{id}\otimes\mathrm{ad}(y))\Delta(x)-(\mathcal{L}(x)\otimes \mathrm{id})\delta(y)-(\mathrm{id}\otimes\mathrm{ad}(x))\Delta(y)-(\mathcal{L}(y)\otimes \mathrm{id})\delta(x)\\
    &&\hspace{1cm}-(\mathrm{id}\otimes Q)\Delta(x\cdot y)\\
    &&=\sum_{i}([x\cdot y,a_{i}]\otimes b_{i}+a_{i}\otimes [x\cdot y,b_{i}]-a_{i}\otimes [y,x\cdot b_{i}]+x\cdot a_{i}\otimes [y,b_{i}]\\
    &&\hspace{1cm}-a_{i}\otimes [x,y\cdot b_{i}]+y\cdot a_{i}\otimes [x,b_{i}]-x\cdot [y,a_{i}]\otimes b_{i}-x\cdot a_{i}\otimes [y,b_{i}]\\
    &&\hspace{1cm}-y\cdot [x,a_{i}]\otimes b_{i}-y\cdot a_{i}\otimes [x,b_{i}]
    -a_{i}\otimes Q(x\cdot y\cdot b_{i})+x\cdot y\cdot a_{i}\otimes Q(b_{i}))\\
    &&=\sum_{i}(a_{i}\otimes [x\cdot y,b_{i}]-a_{i}\otimes [y,x\cdot b_{i}]
-a_{i}\otimes [x,y\cdot b_{i}]-a_{i}\otimes Q(x\cdot y\cdot b_{i})) \\
&&\hspace{1cm}+
    \sum_{i}([x\cdot y,a_{i}]\otimes b_{i}-x\cdot [y,a_{i}]\otimes b_{i}
-y\cdot [x,a_{i}]\otimes b_{i}+x\cdot y\cdot a_{i}\otimes Q(b_{i}))\\
&&\stackrel{(\ref{eq:dualadj3})}{=}\sum_{i}([x\cdot y,a_{i}]\otimes b_{i}-x\cdot [y,a_{i}]\otimes b_{i}
-y\cdot [x,a_{i}]\otimes b_{i}+x\cdot y\cdot a_{i}\otimes Q(b_{i}))\\
&&\stackrel{(\ref{eq:GLR})}{=}\sum_{i}(-x\cdot y\cdot P(a_{i})\otimes b_{i}+x\cdot y\cdot a_{i}\otimes Q(b_{i}))\\
&&=(\mathcal{L}(x\cdot y)\otimes \mathrm{id})(\mathrm{id}\otimes Q-P\otimes \mathrm{id})r.
\end{eqnarray*}}
Thus Eq.~(\ref{eq:Bi4}) holds if and only if Eq.~(\ref{eq:tr7}) holds.

(8) Substituting Eqs.~(\ref{AssoCob}) and~(\ref{eq:LieCob}) into
Eq.~(\ref{eq:Bi5}),
we have {\small \begin{eqnarray*}
    &&\Delta([x,y])-(\mathcal{L}(y)\otimes \mathrm{id})\delta(x)
    +(\mathrm{id}\otimes\mathcal{L}(y))\delta(x)-(\mathrm{id}\otimes\mathrm{ad}(x))\Delta(y)
    -(\mathrm{ad}(x)\otimes \mathrm{id})\Delta(y)\\
    &&\hspace{1cm} +\Delta(P(x)\cdot y)\\
    &&=\sum_{i}(a_{i}\otimes [x,y]\cdot b_{i}-[x,y]\cdot a_{i}\otimes b_{i}-y\cdot[x,a_{i}]\otimes b_{i}-y\cdot a_{i}\otimes [x,b_{i}]\\
    &&\hspace{1cm}+[x,a_{i}]\otimes y\cdot b_{i}+a_{i}\otimes y\cdot[x,b_{i}]-a_{i}\otimes [x,y\cdot b_{i}]+y\cdot a_{i}\otimes[x,b_{i}]\\
    &&\hspace{1cm} -[x,a_{i}]\otimes y\cdot b_{i}+[x,y\cdot a_{i}]\otimes b_{i}+a_{i}\otimes P(x)\cdot y\cdot b_{i}-P(x)\cdot y\cdot a_{i}\otimes b_{i})\\
    &&=\sum_{i}(a_{i}\otimes [x,y]\cdot b_{i}+a_{i}\otimes y\cdot[x,b_{i}]-a_{i}\otimes [x,y\cdot b_{i}]+a_{i}\otimes P(x)\cdot y\cdot b_{i})\\
   &&\hspace{1cm}+ \sum_{i}(-[x,y]\cdot a_{i}\otimes b_{i}-y\cdot[x,a_{i}]\otimes b_{i}+[x,y\cdot a_{i}]\otimes b_{i}-P(x)\cdot y\cdot a_{i}\otimes b_{i})\\
   &&\stackrel{(\ref{eq:GLR})}{=}0.
\end{eqnarray*}}
Thus Eq.~(\ref{eq:Bi5}) holds automatically.
\end{proof}

Summarizing the above study, we have the following conclusion.

\begin{thm}\label{thm:GPA-bi}
Let $(A,\cdot,[-,-],P)$ be a relative Poisson algebra which is
dually represented by $Q$. Let $r=\sum\limits_{i}a_{i}\otimes b_{i}\in
A\otimes A$  and $\Delta,\delta:A\rightarrow A\otimes A$ be linear
maps defined by (\ref{AssoCob}) and (\ref{eq:LieCob})
respectively. Then $(A,\cdot,[-,-],\Delta,\delta,P,Q)$ is a
coboundary relative Poisson bialgebra if and only if
Eqs.~(\ref{eq:AYBE1}), (\ref{eq:AYBE2}),
(\ref{eq:CYBE1}), (\ref{eq:CYBE2}) and
(\ref{eq:tr1})-(\ref{eq:tr7}) hold.
\end{thm}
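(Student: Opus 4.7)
The plan is to verify each of the seven defining conditions of a relative Poisson bialgebra (Definition~\ref{de:GPBA}) one at a time and read off the corresponding tensor equation. Conditions (1) and (5) are built into the hypotheses, so the real work lies in conditions (2), (3), (4), (6), (7).

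First, I would dispose of the ``cocycle'' halves of conditions (3) and (4): because $\Delta$ and $\delta$ are defined by the coboundary formulas~(\ref{AssoCob}) and~(\ref{eq:LieCob}), a direct computation (which is actually performed in the proofs in \cite{Bai2010} and \cite{CP1}) shows that the 1-cocycle conditions (\ref{AssoBia}) and (\ref{eq:LieBia}) are automatically satisfied; we therefore only need the coalgebra axioms. By \cite{Bai2010}, $(A,\Delta)$ is a cocommutative coassociative coalgebra precisely when Eqs.~(\ref{eq:AYBE1}) and (\ref{eq:AYBE2}) hold, and by \cite{CP1}, $(A,\delta)$ is a Lie coalgebra precisely when Eqs.~(\ref{eq:CYBE1}) and (\ref{eq:CYBE2}) hold. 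These four ``Yang--Baxter type'' conditions thus account for the purely coalgebraic part of conditions (2), (3), (4).

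Next, I would invoke Proposition~\ref{pro:RPYBE} part by part to dispatch the remaining equations. Parts (1)--(3) translate the compatibility conditions (\ref{eq:Co1})--(\ref{eq:Co3}) of the relative Poisson coalgebra structure into the tensor equations (\ref{eq:tr1})--(\ref{eq:tr3}), completing condition (2). Parts (4)--(6) translate the conditions (\ref{eq:Bi1})--(\ref{eq:Bi3}) expressing that $P^{*}$ dually represents $(A^{*},Q^{*})$ into (\ref{eq:tr4})--(\ref{eq:tr6}), yielding condition (6). For condition (7), Proposition~\ref{pro:RPYBE}(7) gives (\ref{eq:Bi4})$\Leftrightarrow$(\ref{eq:tr7}), and Proposition~\ref{pro:RPYBE}(8) shows that (\ref{eq:Bi5}) is automatic, requiring no separate equation.

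Putting everything together, the collection $(A,\cdot,[-,-],\Delta,\delta,P,Q)$ is a coboundary relative Poisson bialgebra if and only if Eqs.~(\ref{eq:AYBE1}), (\ref{eq:AYBE2}), (\ref{eq:CYBE1}), (\ref{eq:CYBE2}), and (\ref{eq:tr1})--(\ref{eq:tr7}) all hold. The main subtlety, rather than any single computation, is simply bookkeeping: making sure that no condition from Definition~\ref{de:GPBA} is missed and that one correctly matches the automatic facts (cocycle conditions and Eq.~(\ref{eq:Bi5})) against the nontrivial ones; since the heavy tensor calculus has already been carried out in Proposition~\ref{pro:RPYBE} and in the cited references~\cite{Bai2010, CP1}, no further computation is needed.
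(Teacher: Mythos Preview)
Your proposal is correct and matches the paper's approach exactly: the paper's proof of this theorem is literally the single sentence ``Summarizing the above study, we have the following conclusion,'' and your plan carries out precisely that summary by combining the coalgebra/cocycle characterizations from \cite{Bai2010} and \cite{CP1} with the eight parts of Proposition~\ref{pro:RPYBE}. Your bookkeeping of which conditions in Definition~\ref{de:GPBA} are hypotheses, which are automatic, and which correspond to which tensor equation is accurate.
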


%\textcolor{blue}{GL: The location of the Drinfeld double is changed to be at the end.}

Let $A$ be a vector space and $r=\sum\limits_{i}a_{i}\otimes b_{i}\in
A\otimes A$. $r$ can be identified with a linear map from $A^{*}$
to $A$ as follows:
\begin{equation}\label{eq:identify}
r(a^{*})=\sum_{i}\langle a^{*},a_{i}\rangle b_{i},\;\; \forall a^{*}\in A^{*}.
\end{equation}

There is an analogue of the Drinfeld classical double (\cite{CP1})
for a relative Poisson bialgebra.

\begin{thm}\label{thm:sub-bialgebras}
Let  $(A,\cdot_{A},[-,-]_{A},\Delta_{A},\delta_{A},P,Q)$ be a
relative Poisson bialgebra.
Let $(A^{*},\cdot_{A^{*}}$,
$[-,-]_{A^{*}}$,
$\Delta_{A^{*}},\delta_{A^{*}},Q^{*},P^{*})$ be the
relative Poisson bialgebra given in Remark~\ref{rmk:dual}.
Then there is a coboundary relative Poisson bialgebra structure
on the direct sum $A\oplus A^{*}$ of vector spaces which contains
these two relative Poisson bialgebra structures  on $A$ and
$A^{*}$ respectively as relative Poisson sub-bialgebras.
\end{thm}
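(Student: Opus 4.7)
The plan is to construct the double as a Drinfeld-double-type construction. First, applying Corollary~\ref{thm:equivalence3} to $(A,\cdot_{A},[-,-]_{A},\Delta_{A},\delta_{A},P,Q)$, I obtain a Manin triple of relative Poisson algebras $((A\bowtie A^{*},P+Q^{*},\mathcal{B}_{d}),(A,P),(A^{*},Q^{*}))$. This produces the underlying relative Poisson algebra structure on $A\oplus A^{*}$ with derivation $P+Q^{*}$ together with the nondegenerate invariant bilinear form $\mathcal{B}_{d}$, having both $(A,P)$ and $(A^{*},Q^{*})$ as relative Poisson subalgebras. By Lemma~\ref{lem:standard Manin triple of relative Poisson algebras}, $Q+P^{*}$ dually represents $(A\bowtie A^{*},P+Q^{*})$, so it is the natural candidate for the ``second derivation'' in the bialgebra on $A\oplus A^{*}$.

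Next I take the canonical element $r=\sum_{i}e_{i}\otimes e^{i}\in(A\oplus A^{*})\otimes(A\oplus A^{*})$, where $\{e_{i}\}$ is a basis of $A$ and $\{e^{i}\}$ its dual basis in $A^{*}$; under the identification in Eq.~(\ref{eq:identify}), $r$ corresponds to the linear map $(A\oplus A^{*})^{*}\to A\oplus A^{*}$ induced by $\mathcal{B}_{d}$. Define $\Delta$ and $\delta$ on $A\oplus A^{*}$ by the coboundary formulas Eqs.~(\ref{AssoCob}) and (\ref{eq:LieCob}) with this $r$, and set the derivations to $P+Q^{*}$ and $Q+P^{*}$. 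I then verify the hypotheses of Theorem~\ref{thm:GPA-bi}. The invariance of $\mathcal{B}_{d}$ under the product (resp.\ the bracket) on $A\bowtie A^{*}$ translates into the statement that $r+\tau(r)=\sum_{i}(e_{i}\otimes e^{i}+e^{i}\otimes e_{i})$ is annihilated by $\mathrm{id}\otimes\mathcal{L}(z)-\mathcal{L}(z)\otimes\mathrm{id}$ (resp.\ $\mathrm{ad}(z)\otimes\mathrm{id}+\mathrm{id}\otimes\mathrm{ad}(z)$) for every $z\in A\oplus A^{*}$, giving Eqs.~(\ref{eq:AYBE1}) and (\ref{eq:CYBE1}). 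A direct computation in the matched pair shows that this canonical $r$ in fact solves $\textbf{A}(r)=0$ and $\textbf{C}(r)=0$ (the analogue for relative Poisson algebras of the standard fact that the canonical $r$ of the Lie bialgebra Drinfeld double satisfies the CYBE), which immediately yields Eqs.~(\ref{eq:AYBE2}), (\ref{eq:CYBE2}) and (\ref{eq:tr6}). For the mixed conditions (\ref{eq:tr1}), (\ref{eq:tr2}), (\ref{eq:tr4}), (\ref{eq:tr5}) and (\ref{eq:tr7}), I use that $P$ and $Q^{*}$ (resp.\ $Q$ and $P^{*}$) are mutual adjoints under $\mathcal{B}_{d}$, which yields identities such as $\sum_{i}P(e_{i})\otimes e^{i}=\sum_{i}e_{i}\otimes Q^{*}(e^{i})$, collapsing the combinations $(P\otimes\mathrm{id}-\mathrm{id}\otimes Q^{*})r$ and $(Q\otimes\mathrm{id}-\mathrm{id}\otimes P^{*})r$ to trivially manageable expressions.

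Finally I check that the resulting structure restricts correctly, so that $A$ and $A^{*}$ are sub-bialgebras. For $x\in A$, expanding $\Delta(x)=(\mathrm{id}\otimes\mathcal{L}(x)-\mathcal{L}(x)\otimes\mathrm{id})r$ via the matched-pair product (\ref{eq:Asso}) with $\mu_{1}=-\mathcal{L}_{A}^{*}$, $\mu_{2}=-\mathcal{L}_{A^{*}}^{*}$, and using the duality $\langle\Delta_{A}(x),a^{*}\otimes b^{*}\rangle=\langle x,a^{*}\cdot_{A^{*}}b^{*}\rangle$ from Remark~\ref{rmk:dual}, the $A\otimes A$ component is exactly $\Delta_{A}(x)$, while the $A\otimes A^{*}$, $A^{*}\otimes A$ and $A^{*}\otimes A^{*}$ components cancel; an analogous computation gives $\delta|_{A}=\delta_{A}$, and symmetry between $A$ and $A^{*}$ handles the sub-bialgebra $A^{*}$. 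The main obstacle will not be any single identity but the bookkeeping in condition~(\ref{eq:tr3}), which intertwines $\textbf{A}(r)$, $\textbf{C}(r)$ and several cross-action terms mixing $\mathcal{L}$, $\mathrm{ad}$, $P$ and $Q$ across $A$ and $A^{*}$; carefully tracking the matched-pair actions $-\mathcal{L}_{A}^{*}$, $\mathrm{ad}_{A}^{*}$, $-\mathcal{L}_{A^{*}}^{*}$, $\mathrm{ad}_{A^{*}}^{*}$ and the adjointness of $P,Q$ with $Q^{*},P^{*}$ under $\mathcal{B}_{d}$ will be the delicate part of this verification.
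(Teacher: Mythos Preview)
Your approach is essentially the paper's: build $A\bowtie A^{*}$ from the Manin triple, note via Lemma~\ref{lem:standard Manin triple of relative Poisson algebras} that $Q+P^{*}$ dually represents it, take the canonical $r=\sum_i e_i\otimes e_i^{*}$, and feed everything into Theorem~\ref{thm:GPA-bi}. Two points, however, need fixing.

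First, your adjointness statement is wrong. Under $\mathcal{B}_d$ one has $\langle P(x),a^{*}\rangle=\langle x,P^{*}(a^{*})\rangle$, so $P$ is adjoint to $P^{*}$ (and $Q$ to $Q^{*}$), not $P$ to $Q^{*}$. The identity $\sum_i P(e_i)\otimes e^{i}=\sum_i e_i\otimes Q^{*}(e^{i})$ that you wrote would force $P=Q$. What actually makes the argument work is that on $r=\sum_i e_i\otimes e_i^{*}$ the \emph{big-algebra} operators $P+Q^{*}$ and $Q+P^{*}$ act only through their $A$- and $A^{*}$-components respectively, giving
\[
\big((P+Q^{*})\otimes\mathrm{id}-\mathrm{id}\otimes(Q+P^{*})\big)r=\sum_i P(e_i)\otimes e_i^{*}-\sum_i e_i\otimes P^{*}(e_i^{*})=0,
\]
and likewise $\big((Q+P^{*})\otimes\mathrm{id}-\mathrm{id}\otimes(P+Q^{*})\big)r=0$, by the standard identity $\sum_i T(e_i)\otimes e_i^{*}=\sum_i e_i\otimes T^{*}(e_i^{*})$ for any $T\in\mathrm{End}(A)$.

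Second, your worry about condition~(\ref{eq:tr3}) is misplaced. Once you have $\mathbf{A}(r)=0$, $\mathbf{C}(r)=0$, the invariance of $r+\tau(r)$ (Eqs.~(\ref{eq:AYBE1}) and~(\ref{eq:CYBE1})), and the two vanishings just displayed, \emph{every} term in (\ref{eq:tr3})---and in all of (\ref{eq:tr1}), (\ref{eq:tr2}), (\ref{eq:tr4})--(\ref{eq:tr7})---is an operator applied to an expression that is already zero. There is no delicate bookkeeping; the paper simply records these four vanishings and invokes Theorem~\ref{thm:GPA-bi}. The sub-bialgebra check is then handled, as you indicate, by unwinding the matched-pair formulas (the paper cites the known associative and Lie cases for this).
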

\begin{proof}
Let $r\in A\otimes A^{*}\subset (A\oplus A^{*})\otimes (A\oplus
A^{*})$ correspond to the identity map $\mathrm{id}:A\rightarrow
A$. Let $\lbrace e_{1},\cdots, e_{n}\rbrace$ be a basis of $A$ and $\lbrace e^{*}_{1},\cdots, e^{*}_{n}\rbrace$ be the dual
basis. Then by Eq.~(\ref{eq:identify}), $r=\sum\limits_{i}e_{i}\otimes
e^{*}_{i}$. Since
$(A,\cdot_{A},[-,-]_{A},\Delta_{A},\delta_{A},P,Q)$ is a
relative Poisson bialgebra, there is a relative Poisson
algebra $(A\bowtie A^{*},P+Q^{*})$ given by the matched pair
$((A,P),(A^{*},Q^{*}),-\mathcal{L}^{*}_{A},-\mathcal{L}^{*}_{A^{*}},\mathrm{ad}^{*}_{A},$
$\mathrm{ad}^{*}_{A^{*}})$, that is,
$$x\cdot_{A\bowtie A^{*}}y=x\cdot_{A}y,\;\; x\cdot_{A\bowtie A^{*}}a^{*}=-\mathcal{L}^{*}_{A}(x)a^{*}-\mathcal{L}^{*}_{A^{*}}(a^{*})x,\;\;a^{*}\cdot_{A\bowtie A^{*}} b^{*}=a^{*}\cdot_{A^{*}}b^{*},$$
$$[x,y]_{A\bowtie A^{*}}=[x,y]_{A},\;\;[x,a^{*}]_{A\bowtie A^{*}}=\mathrm{ad}^{*}_{A}(x)a^{*}-\mathrm{ad}^{*}_{A^{*}}(a^{*})x,\;\; [a^{*},b^{*}]_{A\bowtie A^{*}}=[a^{*},b^{*}]_{A^{*}},$$
for all $x,y\in A, a^{*},b^{*}\in A^{*}$. By Lemma
\ref{lem:standard Manin triple of relative Poisson algebras},
$Q+P^{*}$ dually represents $(A\bowtie A^{*},P+Q^{*})$. Define two
linear maps $\Delta_{A\bowtie A^{*}},\delta_{A\bowtie
A^{*}}:A\bowtie A^{*}\rightarrow (A\bowtie A^{*})\otimes(A\bowtie
A^{*})$ respectively  by
$$\Delta_{A\bowtie A^{*}}(u)=(\mathrm{id}\otimes\mathcal{L}_{A\bowtie A^{*}}(u)-\mathcal{L}_{A\bowtie A^{*}}(u)\otimes\mathrm{id})r,$$
$$\delta_{A\bowtie A^{*}}(u)=(\mathrm{ad}_{A\bowtie A^{*}}(u)\otimes\mathrm{id}+\mathrm{id}\otimes\mathrm{ad}_{A\bowtie A^{*}}(u))r,$$
for all $u\in A\bowtie A^{*}$. Then by \cite[Theorem
2.3.6]{Bai2010}, $r$ satisfies the AYBE in the commutative
associative algebra $(A\bowtie A^{*},\cdot_{A\bowtie A^{*}})$, and
the following equation holds:
$$(\mathrm{id}\otimes\mathcal{L}_{A\bowtie A^{*}}(u)-\mathcal{L}_{A\bowtie A^{*}}(u)\otimes\mathrm{id})(r+\tau(r))=0,\;\;\forall u\in A\bowtie A^{*}.$$
By \cite[Propositions 1.4.2 and 2.1.11]{CP1}, $r$ satisfies the
CYBE in the Lie algebra $(A\bowtie A^{*},[-,-]_{A\bowtie A^{*}})$,
and the following equation holds:
$$(\mathrm{ad}_{A\bowtie A^{*}}(u)\otimes\mathrm{id}+\mathrm{id}\otimes\mathrm{ad}_{A\bowtie A^{*}}(u))(r+\tau(r))=0,\;\;\forall u\in A\bowtie A^{*}.$$
Moreover, we have
\begin{eqnarray*}
((P+Q^{*})\otimes\mathrm{id}-\mathrm{id}\otimes(Q+P^{*}))r&=&\sum_{i=1}^{n}((P+Q^{*})\otimes\mathrm{id}-\mathrm{id}\otimes(Q+P^{*}))(e_{i}\otimes e^{*}_{i})\\
&=&\sum_{i=1}^{n}(P(e_{i})\otimes e^{*}_{i}-e_{i}\otimes
P^{*}(e^{*}_{i}))=0,
\end{eqnarray*}
\begin{eqnarray*}
((Q+P^{*})\otimes\mathrm{id}-\mathrm{id}\otimes(P+Q^{*}))r&=&\sum_{i=1}^{n}((Q+P^{*})\otimes\mathrm{id}-\mathrm{id}\otimes(P+Q^{*}))(e_{i}\otimes e^{*}_{i})\\
&=&\sum_{i=1}^{n}(Q(e_{i})\otimes e^{*}_{i}-e_{i}\otimes
Q^{*}(e^{*}_{i}))=0.
\end{eqnarray*}
Then by Theorem \ref{thm:GPA-bi}, $(A\bowtie A^{*},\cdot_{A\bowtie
A^{*}},[-,-]_{A\bowtie A^{*}},\Delta_{A\bowtie
A^{*}},\delta_{A\bowtie A^{*}},P+Q^{*},Q+P^{*})$ is a coboundary
relative Poisson bialgebra.
Moreover, again by \cite[Theorem
2.3.6]{Bai2010} and \cite[Propositions 1.4.2 and 2.1.11]{CP1}, we
have
$$\Delta_{A\bowtie A^{*}}(x)=\Delta_{A}(x), \;\delta_{A\bowtie A^{*}}(x)=\delta_{A}(x),\; \forall x\in A.$$
Thus $(A\bowtie A^{*},\cdot_{A\bowtie A^{*}},[-,-]_{A\bowtie
A^{*}},\Delta_{A\bowtie A^{*}},\delta_{A\bowtie
A^{*}},P+Q^{*},Q+P^{*})$ contains
$(A,\cdot_{A},[-,-]_{A},\Delta_{A},$   $\delta_{A},P,Q)$ as a
relative Poisson sub-bialgebra. Similarly,  $(A\bowtie
A^{*},\cdot_{A\bowtie A^{*}},[-,-]_{A\bowtie A^{*}},$
$\Delta_{A\bowtie A^{*}}$, $\delta_{A\bowtie A^{*}},$
$P+Q^{*},Q+P^{*})$ contains
$(A^{*},\cdot_{A^{*}},[-,-]_{A^{*}},\Delta_{A^{*}},\delta_{A^{*}},Q^{*},P^{*})$
as a relative Poisson sub-bialgebra.
\end{proof}

A direct consequence of Theorem \ref{thm:GPA-bi} is given as
follows.

\begin{cor}\label{cor:GPA-bi}
Let $(A,\cdot,[-,-],P)$ be a relative Poisson algebra which is
dually represented by $Q$. Let $r\in A\otimes A$ and $\Delta,\delta:A\rightarrow A\otimes A$
be linear maps defined by Eqs.~(\ref{AssoCob}) and (\ref{eq:LieCob})
respectively. If $r$ is antisymmetric  in the sense that $r+\tau(r)=0$  and satisfies the AYBE in
the commutative associative algebra $(A,\cdot)$, the CYBE in
the Lie algebra $(A,[-,-])$ and the following equations:
\begin{equation}\label{eq:PYBE1}
(P\otimes \mathrm{id}-\mathrm{id}\otimes Q)r=0,
\end{equation}
\begin{equation}\label{eq:PYBE2}
(Q\otimes \mathrm{id}-\mathrm{id}\otimes P)r=0,
\end{equation}
then $(A,\cdot,[-,-],\Delta,\delta,P,Q)$ is a coboundary relative Poisson bialgebra.
\end{cor}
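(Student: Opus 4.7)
The plan is to apply Theorem \ref{thm:GPA-bi} directly, checking that each of its eleven defining equations is forced by the present hypotheses. No new ideas are needed; the work is purely in matching the hypotheses to the pieces of the characterization.

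First, equations (\ref{eq:AYBE1}) and (\ref{eq:CYBE1}) both feature $r+\tau(r)$ as a factor and hence vanish by antisymmetry, while equations (\ref{eq:AYBE2}) and (\ref{eq:CYBE2}) both feature $\mathbf{A}(r)$ or $\mathbf{C}(r)$ and hence vanish by the AYBE and CYBE hypotheses. Second, observe that $(\mathrm{id}\otimes P-Q\otimes\mathrm{id})r$ is the negative of the tensor appearing in (\ref{eq:PYBE2}), while $(\mathrm{id}\otimes Q-P\otimes\mathrm{id})r$ is the negative of that in (\ref{eq:PYBE1}); both therefore vanish. Consequently equations (\ref{eq:tr1}), (\ref{eq:tr2}), (\ref{eq:tr4}), (\ref{eq:tr5}), and (\ref{eq:tr7})---each of which is a sum of operators applied to one of these two tensors---hold automatically, and equation (\ref{eq:tr6}) is likewise an operator applied to $\mathbf{A}(r)=0$.

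The only equation whose verification requires examining its summands separately is (\ref{eq:tr3}), but its six summands decompose cleanly: the first involves $\mathbf{A}(r)$ and vanishes by the AYBE; the second involves $\mathbf{C}(r)$ and vanishes by the CYBE; the third is an operator applied to $r+\tau(r)$ and vanishes by antisymmetry; and the last two are operators applied to $(Q\otimes\mathrm{id}-\mathrm{id}\otimes P)r$ and to $(\mathrm{id}\otimes P-Q\otimes\mathrm{id})r$ respectively, both zero by (\ref{eq:PYBE1}) and (\ref{eq:PYBE2}). Hence every condition in Theorem \ref{thm:GPA-bi} is satisfied. No genuine obstacle arises: the corollary is a direct specialization of that theorem, and the proof is merely the bookkeeping outlined above.
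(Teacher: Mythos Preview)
Your proposal is correct and follows exactly the approach the paper intends: the corollary is stated as ``a direct consequence of Theorem~\ref{thm:GPA-bi}'', and your term-by-term verification that antisymmetry, $\mathbf{A}(r)=0$, $\mathbf{C}(r)=0$, and Eqs.~(\ref{eq:PYBE1})--(\ref{eq:PYBE2}) kill every summand of Eqs.~(\ref{eq:AYBE1}), (\ref{eq:AYBE2}), (\ref{eq:CYBE1}), (\ref{eq:CYBE2}), (\ref{eq:tr1})--(\ref{eq:tr7}) is precisely that consequence spelled out. (Minor note: Eq.~(\ref{eq:tr3}) has five summands rather than six, but your handling of them is correct.)
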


%In \cite{Ni}, $r$ is called to satisfy the \textbf{Poisson Yang-Baxter equation} (PYBE) if $r$ satisfies both CYBE  and AYBE. Based on this, we give the following definition.
It motivates us to give the following notion.

\begin{defi}
Let $(A,\cdot,[-,-],P)$ be a relative Poisson algebra,
$Q:A\rightarrow A$ be a linear map and $r\in A\otimes A$.  $r$ is
called a solution of the \textbf{relative  Poisson Yang-Baxter
equation (RPYBE) associated to $Q$ ($Q$-RPYBE)} in $(A,P)$ if $r$
satisfies the AYBE in the commutative associative algebra
$(A,\cdot)$, the CYBE in the Lie algebra $(A,[-,-])$ and
Eqs.~(\ref{eq:PYBE1})-(\ref{eq:PYBE2}).
\end{defi}

\begin{rmk}\label{rmk:equiv}
If $r\in A\otimes A$ is antisymmetric, then Eq.~(\ref{eq:PYBE1})
holds if and only if Eq.~(\ref{eq:PYBE2}) holds. On the other
hand, the notion of \textbf{Poisson Yang-Baxter equation
(PYBE)} in a Poisson algebra was given in \cite{NB} whose solutions
are exactly the solutions of both the AYBE and the CYBE. So from
the form, the $Q$-RPYBE (defined in relative Poisson algebras) is
exactly the PYBE (defined in Poisson algebras) satisfying the
additional Eqs.~(\ref{eq:PYBE1})-(\ref{eq:PYBE2}).
\end{rmk}

%We say $r\in A\otimes A$ is nondegenerate if $r:A\rightarrow A^{*}$ defined by Eq.~(\ref{eq:identify}) is bijective.

\begin{thm}\label{thm:Q-PYBE}
Let $(A,\cdot, [-,-],P)$ be a relative Poisson algebra and $r\in
A\otimes A$ be antisymmetric. Let $Q:A\rightarrow A$ be a linear
map. Then $r$ is a solution of the $Q$-RPYBE in $(A,P)$ if and
only if $r$ satisfies
\begin{equation}\label{eq:Q-PYBE1}
[r(a^{*}),r(b^{*})]=r(\mathrm{ad}^{*}(r(a^{*}))b^{*}-\mathrm{ad}^{*}(r(b^{*}))a^{*}),
\end{equation}
\begin{equation}\label{eq:Q-PYBE2}
r(a^{*})\cdot r(b^{*})=-r(\mathcal{L}^{*}(r(a^{*}))b^{*}+\mathcal{L}^{*}(r(b^{*}))a^{*}),
\end{equation}
\begin{equation}\label{eq:Q-PYBE3}
Pr=rQ^{*},
\end{equation}
for all $a^{*},b^{*}\in A^{*}$.
\end{thm}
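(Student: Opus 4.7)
The plan is to verify the equivalence componentwise, splitting the $Q$-RPYBE into its three constituent conditions: the AYBE in $(A,\cdot)$, the CYBE in $(A,[-,-])$, and the two tensor equations (\ref{eq:PYBE1})--(\ref{eq:PYBE2}), then showing that each of Eqs.~(\ref{eq:Q-PYBE1}), (\ref{eq:Q-PYBE2}) and (\ref{eq:Q-PYBE3}) corresponds to exactly one of these pieces. Antisymmetry of $r$ is needed to collapse pairs of tensor conditions into a single ``operator form" statement, and to make the CYBE and AYBE pass through the identification in Eq.~(\ref{eq:identify}).

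First I would dispose of Eq.~(\ref{eq:Q-PYBE3}), which is the most elementary piece. Writing $r=\sum_i a_i\otimes b_i$, a direct computation from Eq.~(\ref{eq:identify}) gives, for every $a^{*}\in A^{*}$,
$$Pr(a^{*})=\sum_i\langle a^{*},a_i\rangle P(b_i),\qquad rQ^{*}(a^{*})=\sum_i\langle a^{*},Q(a_i)\rangle b_i.$$
Pairing the difference with an arbitrary $v^{*}\in A^{*}$, the identity $Pr=rQ^{*}$ is equivalent to $(Q\otimes\mathrm{id}-\mathrm{id}\otimes P)r=0$, which is precisely Eq.~(\ref{eq:PYBE2}). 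By Remark~\ref{rmk:equiv}, antisymmetry of $r$ then makes this equivalent to Eq.~(\ref{eq:PYBE1}) as well, so Eq.~(\ref{eq:Q-PYBE3}) encodes both Eqs.~(\ref{eq:PYBE1}) and (\ref{eq:PYBE2}) at once.

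Next I would invoke two classical ``operator form" equivalences for antisymmetric $r$, which are independent of the relative Poisson structure. For the Lie algebra $(A,[-,-])$, it is known (see~\cite{CP1}) that $r$ satisfies the CYBE $\mathbf{C}(r)=0$ if and only if the linear map $r:A^{*}\to A$ satisfies Eq.~(\ref{eq:Q-PYBE1}); the argument is obtained by expanding $\mathbf{C}(r)=[r_{12},r_{13}]+[r_{12},r_{23}]+[r_{13},r_{23}]$, pairing with $a^{*}\otimes b^{*}\otimes c^{*}$, and repeatedly using the identity $\langle c^{*},r(d^{*})\rangle=-\langle d^{*},r(c^{*})\rangle$ provided by antisymmetry to rewrite each term. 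The analogous statement for $(A,\cdot)$, proved in the same manner and recorded in~\cite{Bai2010}, yields that $\mathbf{A}(r)=0$ is equivalent to Eq.~(\ref{eq:Q-PYBE2}). Both of these are pure Lie (respectively associative) facts, so they apply verbatim here without any interaction with $P$ or $Q$.

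Combining the three equivalences gives the theorem: $r$ satisfies the $Q$-RPYBE, i.e.\ the AYBE in $(A,\cdot)$, the CYBE in $(A,[-,-])$ and Eqs.~(\ref{eq:PYBE1})--(\ref{eq:PYBE2}), if and only if Eqs.~(\ref{eq:Q-PYBE2}), (\ref{eq:Q-PYBE1}) and (\ref{eq:Q-PYBE3}) hold. The main (and only real) obstacle is the tensor bookkeeping in the CYBE/AYBE step, but since those equivalences are already in the literature, the proof reduces to citing them and handling the short verification for Eq.~(\ref{eq:Q-PYBE3}) explicitly.
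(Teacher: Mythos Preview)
Your proposal is correct and follows essentially the same approach as the paper: both proofs cite the known operator-form equivalences for the CYBE and AYBE (the paper cites \cite{Kup} rather than \cite{CP1} for the Lie side, and \cite{Bai2010} for the associative side), and both handle Eq.~(\ref{eq:Q-PYBE3}) by the same short direct computation together with the antisymmetry observation of Remark~\ref{rmk:equiv}. The only cosmetic difference is that the paper phrases the third equivalence via Eq.~(\ref{eq:PYBE1}) whereas you reach Eq.~(\ref{eq:PYBE2}) first, but as you note these are interchangeable for antisymmetric $r$.
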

\begin{proof}
Let $r=\sum\limits_{i}a_{i}\otimes b_{i}$. By \cite{Kup}, $r$ is
an antisymmetric solution of  the CYBE in the Lie algebra
$(A,[-,-])$ if and only if Eq.~(\ref{eq:Q-PYBE1}) holds. By
\cite{Bai2010}, $r$ is an antisymmetric solution of the AYBE in
the commutative associative algebra $(A,\cdot)$ if and only if
Eq.~(\ref{eq:Q-PYBE2}) holds. Moreover, for all $a^{*}\in A^{*}$,
by Eq.~(\ref{eq:identify}), we have
$$r(Q^{*}(a^{*}))=\sum_{i}\langle Q^{*}(a^{*}),a_{i}\rangle b_{i}=\sum_{i}\langle a^{*},Q(a_{i})\rangle b_{i},\;\;P(r(a^{*}))=\sum_{i}\langle a^{*},a_{i}\rangle P(b_{i}).$$
So Eq.~(\ref{eq:PYBE1}) holds if and only if Eq.~(\ref{eq:Q-PYBE3}) holds. This completes the proof.
\end{proof}

 \subsection{$\mathcal{O}$-operators of relative Poisson algebras}\

\begin{defi}\label{defi:O-operator}
Let $(A,P)$ be a relative Poisson algebra, $(\mu,\rho,V)$ be a
compatible structure on $(A,P)$ and $\alpha:V\rightarrow V$ be a
linear map. A linear map $T:V\rightarrow A$ is called a
\textbf{weak $\mathcal{O}$-operator of $(A,P)$ associated to $(\mu,\rho,V)$
and $\alpha$} if the following equations hold:
\begin{equation}\label{eq:O1}
T(u)\cdot T(v)=T(\mu(T(u))v+\mu(T(v))u),
\end{equation}
\begin{equation}\label{eq:O2}
[T(u), T(v)]=T(\rho(T(u))v-\rho(T(v))u),
\end{equation}
\begin{equation}\label{eq:O3}
PT=T\alpha,
\end{equation}
for all $u,v\in V$. If in addition, $(\mu,\rho,\alpha,V)$ is a representation of
$(A,P)$,  then $T$ is called an \textbf{$\mathcal{O}$-operator of $(A,P)$
associated to $(\mu,\rho,\alpha,V)$}.
\end{defi}

\begin{rmk}
In fact,  $T$ is called  {\bf an $\mathcal{O}$-operator of the commutative associative algebra $(A,\cdot)$ associated to $(\mu,V)$} (\cite{Bai2010}) if $T$ satisfies Eq.~(\ref{eq:O1}), and
  $T$ is called an {\bf $\mathcal{O}$-operator of the Lie algebra $(A,[-,-])$ associated to $(\rho,V)$} (\cite{Kup}) if $T$ satisfies Eq.~(\ref{eq:O2}).
\end{rmk}

Therefore  Theorem \ref{thm:Q-PYBE} is rewritten in terms of
$\mathcal{O}$-operators as follows.

\begin{cor}
Let $(A,P)$ be a relative Poisson algebra and $r\in A\otimes A$ be
antisymmetric. Let $Q:A\rightarrow A$ be a linear map. Then $r$ is
a solution of  the $Q$-RPYBE in $(A,P)$ if and only if $r$ is a
weak $\mathcal{O}$-operator of $(A,P)$ associated to
$(-\mathcal{L}^{*},\mathrm{ad}^{*},A^{*})$ and $Q^{*}$. If in
addition, $(A,P)$ is dually represented by $Q$, then $r$ is a
solution of the $Q$-RPYBE in $(A,P)$ if and only if $r$ is an
$\mathcal{O}$-operator of $(A,P)$ associated to the representation
$(-\mathcal{L}^{*},\mathrm{ad}^{*},Q^{*},A^{*})$.
\end{cor}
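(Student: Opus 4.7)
The plan is to observe that this corollary is essentially a direct translation of Theorem~\ref{thm:Q-PYBE} into the language of (weak) $\mathcal{O}$-operators, and the proof is a matching of conditions. First, one recalls that by the lemma following Proposition~\ref{pro:semipro}, since $(\mathcal{L},\mathrm{ad},A)$ is the compatible structure underlying the adjoint representation of $(A,P)$, its dual $(-\mathcal{L}^{*},\mathrm{ad}^{*},A^{*})$ is automatically a compatible structure on $(A,P)$. This makes the phrase ``weak $\mathcal{O}$-operator of $(A,P)$ associated to $(-\mathcal{L}^{*},\mathrm{ad}^{*},A^{*})$ and $Q^{*}$'' well-defined.

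Next, I would simply read off Eqs.~(\ref{eq:O1})--(\ref{eq:O3}) in Definition~\ref{defi:O-operator} under the substitutions $V = A^{*}$, $\mu = -\mathcal{L}^{*}$, $\rho = \mathrm{ad}^{*}$, $\alpha = Q^{*}$, and $T = r$ (where $r$ is identified with the linear map $A^{*}\to A$ via Eq.~(\ref{eq:identify})). This gives exactly Eqs.~(\ref{eq:Q-PYBE2}), (\ref{eq:Q-PYBE1}) and (\ref{eq:Q-PYBE3}) respectively. By Theorem~\ref{thm:Q-PYBE}, this system of three equations is equivalent to $r$ being an antisymmetric solution of the $Q$-RPYBE in $(A,P)$. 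This proves the first equivalence.

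For the second statement, the additional hypothesis that $Q$ dually represents $(A,P)$ means, by Definition~\ref{defi:dual}, precisely that $(-\mathcal{L}^{*},\mathrm{ad}^{*},Q^{*},A^{*})$ is a representation (and not merely a compatible structure) of $(A,P)$. Under that hypothesis, the weak $\mathcal{O}$-operator condition upgrades to the genuine $\mathcal{O}$-operator condition, and the conclusion follows.

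There is no real obstacle here; the statement is a dictionary entry. The only thing one has to be careful about is the sign conventions in $\mu = -\mathcal{L}^{*}$ versus $\rho = +\mathrm{ad}^{*}$, which propagate into the right-hand sides of Eqs.~(\ref{eq:Q-PYBE1})--(\ref{eq:Q-PYBE2}), and the identification of $r\in A\otimes A$ with a linear map $A^{*}\to A$ used to make sense of $Pr = rQ^{*}$; both of these are already handled by Eq.~(\ref{eq:identify}) and the computation at the end of the proof of Theorem~\ref{thm:Q-PYBE}.
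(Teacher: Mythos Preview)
Your proposal is correct and matches the paper's approach: the paper presents this corollary with no proof at all, only the sentence ``Therefore Theorem~\ref{thm:Q-PYBE} is rewritten in terms of $\mathcal{O}$-operators as follows,'' so your matching of Eqs.~(\ref{eq:O1})--(\ref{eq:O3}) against Eqs.~(\ref{eq:Q-PYBE1})--(\ref{eq:Q-PYBE3}) and your invocation of Definition~\ref{defi:dual} for the second part is exactly the intended reading.
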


We consider the semi-direct product relative Poisson algebras which are dually represented.

\begin{thm}\label{pro:eqGPA}
Let $(A,P)$ be a  relative Poisson algebra and $(\mu,\rho,V)$ be a compatible structure on $(A,P)$. Let $Q:A\rightarrow A$ and $\alpha,\beta:V\rightarrow V$ be linear maps.
Then the following conditions are equivalent.
\begin{enumerate}
\item There is a relative Poisson algebra
$(A\ltimes_{\mu,\rho}V,P+\alpha)$ which is dually represented by
the linear operator $Q+\beta$;

\item There is a relative Poisson algebra
$(A\ltimes_{-\mu^{*},\rho^{*}}V^{*},P+\beta^{*})$ which is dually
represented by the linear operator $Q+\alpha^{*}$;

\item The following conditions are satisfied:
\begin{enumerate}
\item $(\mu,\rho,\alpha,V)$ is a representation of $(A,P)$; \item
$\beta$ dually represents $(A,P)$ on $(\mu,\rho,V)$; \item $Q$
dually represents $(A,P)$; \item for all $x\in A,v\in V$, we have
\begin{equation}\label{eq:cond1}
\mu(Q(x))v-\mu(x)\alpha(v)-\beta(\mu(x)v)=0,
\end{equation}
\begin{equation}\label{eq:cond2}
\rho(Q(x))v-\rho(x)\alpha(v)-\beta(\rho(x)v)=0.
\end{equation}
\end{enumerate}
\end{enumerate}
\end{thm}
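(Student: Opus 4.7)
The plan is to establish the two equivalences (1)$\Leftrightarrow$(3) and (2)$\Leftrightarrow$(3) by decomposing the defining conditions on the direct sum $A\oplus V$ (respectively $A\oplus V^{*}$) into their components in $A$ and in $V$ (respectively $V^{*}$), and then isolating sub-conditions by setting various components to zero.

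For (1)$\Leftrightarrow$(3), I would first apply Proposition~\ref{pro:semipro}: the data $(A\oplus V,\cdot,[-,-],P+\alpha)$ with operations given by Eqs.~(\ref{eq:SDASSO}) and (\ref{eq:SDLie}) form a relative Poisson algebra if and only if $(\mu,\rho,\alpha,V)$ is a representation of $(A,P)$, which is exactly (3)(a). Assuming this, I would then apply Corollary~\ref{cor:dualadj2}: the linear map $Q+\beta$ dually represents the semi-direct product if and only if Eqs.~(\ref{eq:dualadj1}), (\ref{eq:dualadj2}), and (\ref{eq:eqdualadj1}) hold on $A\oplus V$. Substituting arguments of the form $x+u$, $y+v$, $z+w$ and expanding via Eqs.~(\ref{eq:SDASSO}) and (\ref{eq:SDLie}), each equation splits naturally into its $A$-valued and $V$-valued parts. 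The $A$-valued parts (obtained by setting $u=v=w=0$) are precisely the three equations for $Q$ dually representing $(A,P)$, yielding (3)(c). The $V$-valued parts, upon further isolation by letting all but one of $u,v,w$ vanish, split into two families: one recovers Eqs.~(\ref{eq:dualrep1})-(\ref{eq:dualrep3}) for $\beta$, giving (3)(b) (with Proposition~\ref{pro:equivalentconditions}(3) used to identify the contribution from Eq.~(\ref{eq:eqdualadj1}) with Eq.~(\ref{eq:dualrep3})); the other produces Eqs.~(\ref{eq:cond1})-(\ref{eq:cond2}), giving (3)(d).

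For (2)$\Leftrightarrow$(3), I would run the parallel argument on $A\oplus V^{*}$. Proposition~\ref{pro:semipro} combined with Proposition~\ref{pro:dual} shows that $(A\ltimes_{-\mu^{*},\rho^{*}}V^{*},P+\beta^{*})$ is a relative Poisson algebra if and only if condition (3)(b) holds. Then Corollary~\ref{cor:dualadj2} yields three equations on $A\oplus V^{*}$ expressing that $Q+\alpha^{*}$ dually represents this semi-direct product. After the analogous expansion, the $A$-component of each equation again recovers (3)(c), while the $V^{*}$-component, upon pairing with an arbitrary $v\in V$ and using the sign conventions in the definitions of $\mu^{*}$ and $\alpha^{*}$, recovers the defining equations (\ref{eq:rep1})-(\ref{eq:rep3}) of a representation (namely (3)(a)) together with Eqs.~(\ref{eq:cond1})-(\ref{eq:cond2}) (namely (3)(d)).

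The principal obstacle is bookkeeping: each of the three ``dual representation'' equations on $A\oplus V$ (respectively $A\oplus V^{*}$) produces up to eight sub-equations indexed by the vanishing pattern of the $V$- (respectively $V^{*}$-) arguments, and one must verify that the full collection accounts for all, and only, the conditions (3)(a)-(3)(d) without redundancy. The compatibility condition Eq.~(\ref{eq:CompStru}) for the given compatible structure $(\mu,\rho,V)$ is used implicitly in re-expressing mixed terms, the module identities $\mu(x)\mu(y)=\mu(x\cdot y)$ and $\rho([x,y])=[\rho(x),\rho(y)]$ are used to consolidate nested compositions appearing in the triple-product equation, and the derivation properties of $P$ and $Q$ are used throughout to match expansions against the target equations.
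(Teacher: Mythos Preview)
Your proposal is correct and follows essentially the same route as the paper for $(1)\Leftrightarrow(3)$: Proposition~\ref{pro:semipro} handles condition (3)(a), Corollary~\ref{cor:dualadj2} reduces the dual-representation requirement to three equations on $A\oplus V$, and specializing the arguments isolates (3)(b), (3)(c), and (3)(d). For $(2)\Leftrightarrow(3)$ the paper takes a shortcut you do not: rather than redo the expansion on $A\oplus V^{*}$, it applies the already-proven equivalence $(1)\Leftrightarrow(3)$ with the data $(\mu,\rho,\alpha,\beta,V)$ replaced by $(-\mu^{*},\rho^{*},\beta^{*},\alpha^{*},V^{*})$, so that conditions (a$'$)--(d$'$) pop out immediately; it then only needs to check that (a$'$) and (b$'$) swap into (3)(b) and (3)(a) via Proposition~\ref{pro:dual}, that (c$'$) is (3)(c), and that the dualized versions (\ref{eq:cond3})--(\ref{eq:cond4}) of (d$'$) are equivalent to (\ref{eq:cond1})--(\ref{eq:cond2}) by a single pairing computation. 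Your direct parallel computation on $A\oplus V^{*}$ achieves the same end with slightly more bookkeeping but no additional ideas. One small correction: the compatibility condition~(\ref{eq:CompStru}) is not actually used in the paper's expansion of the three dual-representation equations on the semi-direct product; the computation is a straight expansion of the operations (\ref{eq:SDASSO})--(\ref{eq:SDLie}), and the only module identity needed is $\mu(x)\mu(y)=\mu(x\cdot y)$ when computing the triple product in~(\ref{eq:eqdualadj1}).
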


\begin{proof}
$(1)\Longleftrightarrow (3)$. By Proposition \ref{pro:semipro}, $(A\ltimes_{\mu,\rho}V,P+\alpha)$ is a relative Poisson algebra if and only if
 $(\mu,\rho,\alpha,V)$ is a representation of the relative Poisson algebra $(A,P)$.
Let $x,y,z\in A,u,v,w\in V$. By Corollary \ref{cor:dualadj2}, $Q+\beta$ dually represents $(A\ltimes_{\mu,\rho}V,P+\alpha)$ if and only if the following equations are satisfied:
\begin{eqnarray*}
0&=&(x+u)\cdot(Q+\beta)(y+v)-(P+\alpha)(x+u)\cdot(y+v)-(Q+\beta)((x+u)\cdot (y+v))\\
&=& x\cdot Q(y)-P(x)\cdot y-Q(x\cdot y)+\mu(x)\beta(v)-\mu(P(x))v-\beta(\mu(x))v\\
&\mbox{}&+\mu(Q(y))u
-\mu(y)\alpha(u)
-\beta(\mu(y)u);\\
0&=&[x+u,(Q+\beta)(y+v)]-[(P+\alpha)(x+u),y+v]-(Q+\beta)([x+u,y+v])\\
&=& [x,Q(y)]-[P(x),y]-Q([x,y])+\rho(x)\beta(v)-\rho(P(x))v-\beta(\rho(x))v\\
&&+\rho(Q(y))u-\rho(y)\alpha(u)-\beta(\rho(y)u);\\
0&=&(P+\alpha+Q+\beta)((x+u)\cdot (y+v)\cdot (z+w))\\
&=&(P+Q)(x\cdot y\cdot z)+(\alpha+\beta)(\mu(x\cdot y)w+\mu(x\cdot z)v+\mu(y\cdot z)u).
\end{eqnarray*}
If the above equations hold, then
\begin{enumerate}
\item[(i)] Eqs.~(\ref{eq:dualrep1}), ~(\ref{eq:dualrep2}) and
~(\ref{eq:eqdualrep1}) (where $y$ is replaced by $z$) hold by
letting $y=u=0$; \item[(ii)] Eqs.~(\ref{eq:dualadj1}),
(\ref{eq:dualadj2}) and~(\ref{eq:eqdualadj1}) hold by letting
$u=v=w=0$; \item[(iii)] Eqs.~ (\ref{eq:cond1})
and~(\ref{eq:cond2}) (where $x$ is replaced by $y$, and $v$ by
$u$)  hold by letting $x=v=0$.
\end{enumerate}
Conversely, obviously, if Eqs.~(\ref{eq:dualrep1}), ~(\ref{eq:dualrep2}), ~(\ref{eq:eqdualrep1}), (\ref{eq:dualadj1}), (\ref{eq:dualadj2}),~(\ref{eq:eqdualadj1}), (\ref{eq:cond1}) and~(\ref{eq:cond2}) hold,
then the above equations hold. Hence Condition (1) holds if and only if Condition (3) holds.

$(2)\Longleftrightarrow (3)$. From the above equivalence between Condition (1) and Condition (3), we have Condition (2) holds if and only if the  items (a)-(c) in Condition (3) as well as
 the following two equations hold (for all $x\in A, u^{*}\in V^{*}$):
\begin{equation}\label{eq:cond3}
-\mu^{*}(Q(x))u^{*}+\mu^{*}(x)\beta^{*}(u^{*})+\alpha^{*}(\mu^{*}(x)u^{*})=0,
\end{equation}
\begin{equation}\label{eq:cond4}
\rho^{*}(Q(x))u^{*}-\rho^{*}(x)\beta^{*}(u^{*})-\alpha^{*}(\rho^{*}(x)u^{*})=0.
\end{equation}
For all $x\in A, u^*\in V^*, v\in V$, we have
{\small\begin{eqnarray*} \langle
-\mu^{*}(Q(x))u^{*}+\mu^{*}(x)\beta^{*}(u^{*})+\alpha^{*}(\mu^{*}(x)u^{*}),v\rangle=\langle
u^*, -\mu(Q(x))v+\mu(x)\alpha(v)+\beta(\mu(x)v)\rangle.
\end{eqnarray*}}
Hence Eq.~(\ref{eq:cond3}) holds if and only if Eq.~(\ref{eq:cond1}) holds. Similarly, Eq.~(\ref{eq:cond4}) holds if and only if Eq.~(\ref{eq:cond2}) holds.
Hence Condition (2) holds if and only if Condition (3) holds.
\end{proof}

Next we show that $\mathcal O$-operators give antisymmetric
solutions of the RPYBE in semi-direct product relative Poisson
algebras and hence give rise to relative Poisson bialgebras.

\begin{thm}\label{thm:GPBA}
Let $(A,P)$ be a relative Poisson algebra and $(\mu,\rho,V)$ be
a compatible structure on $(A,P)$. Let $\beta$ dually represent
$(A,P)$ on $(\mu,\rho,V)$ and thus let
$(-\mu^{*},\rho^{*},\beta^{*},V^{*})$ be the  representation of
$(A,P)$ defined in Proposition~\ref{pro:dual}.
 Let $Q:A\rightarrow
A$ and $\alpha:V\rightarrow V$ be linear maps. Let $T:V\rightarrow
A$ be a linear map which is identified as an element in
$(A\ltimes_{-\mu^{*},\rho^{*}}V^{*}) \otimes
(A\ltimes_{-\mu^{*},\rho^{*}}V^{*})$.
\begin{enumerate}
\item  $r=T-\tau(T)$ is an antisymmetric solution of the
$(Q+\alpha^{*})$-RPYBE in the relative Poisson algebra
$(A\ltimes_{-\mu^{*},\rho^{*}}V^{*},P+\beta^{*})$ if and only if
$T$ is a weak $\mathcal{O}$-operator of $(A,P)$ associated to
$(\mu,\rho,V)$ and $\alpha$, and satisfies $T\beta=QT$. \item Assume that
$(A,P)$ is dually represented by $Q$ and $(\mu,\rho,\alpha,$ $V)$
is a representation of $(A,P)$. If $T$ is an
$\mathcal{O}$-operator of $(A,P)$ associated to $(\mu,\rho,\alpha,V)$
satisfying $T\beta=QT$, then $r=T-\tau(T)$ is an antisymmetric
solution of the $(Q+\alpha^{*})$-RPYBE in the relative Poisson
algebra $(A\ltimes_{-\mu^{*},\rho^{*}}V^{*},P+\beta^{*})$. If in
addition, Eqs.~(\ref{eq:cond1}) and (\ref{eq:cond2}) hold, then
$Q+\alpha^{*}$ dually represents the relative Poisson algebra
$(A\ltimes_{-\mu^{*},\rho^{*}}V^{*},P+\beta^{*})$. In this case,
there is a relative Poisson bialgebra
$(A\ltimes_{-\mu^{*},\rho^{*}}V^{*},\cdot,[-,-],\Delta,\delta,P+\beta^{*},Q+\alpha^{*})$,
where the linear maps $\Delta$ and $\delta$ are defined
respectively by Eqs.~(\ref{AssoCob}) and (\ref{eq:LieCob}) with
$r=T-\tau(T)$.
\end{enumerate}
 \end{thm}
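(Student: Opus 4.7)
The plan is to apply Theorem~\ref{thm:Q-PYBE} to the semi-direct product relative Poisson algebra $B:=(A\ltimes_{-\mu^{*},\rho^{*}}V^{*}, P+\beta^{*})$ together with the linear map $Q+\alpha^{*}$ on $B$. Writing $T=\sum_{i} a_{i}\otimes v_{i}^{*}\in A\otimes V^{*}\subset B\otimes B$, the element $r:=T-\tau(T)$ is antisymmetric by construction. Under the identification (\ref{eq:identify}) of $r$ with a linear map $B^{*}\to B$ and the decomposition $B^{*}\cong A^{*}\oplus V$, a short computation using the pairing between $B$ and $B^{*}$ yields
\[
r(\xi+v)=T^{*}(\xi)-T(v)\in B,\qquad \xi\in A^{*},\ v\in V,
\]
where $T^{*}:A^{*}\to V^{*}$ is the linear dual of $T:V\to A$; explicitly, the $A$-component of $r(\xi+v)$ is $-T(v)$ and the $V^{*}$-component is $T^{*}(\xi)$.

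By Theorem~\ref{thm:Q-PYBE}, the antisymmetric $r$ is a solution of the $(Q+\alpha^{*})$-RPYBE in $B$ if and only if the operator identities (\ref{eq:Q-PYBE1})--(\ref{eq:Q-PYBE3}) hold in $B$ with $P,Q$ replaced by $P+\beta^{*}, Q+\alpha^{*}$. For the derivation identity (\ref{eq:Q-PYBE3}), substituting the formula for $r(\xi+v)$ and comparing the $A$- and $V^{*}$-components yields precisely $PT=T\alpha$ (that is, Eq.~(\ref{eq:O3})) and $T\beta=QT$, respectively. For the associative identity (\ref{eq:Q-PYBE2}) and the Lie identity (\ref{eq:Q-PYBE1}), evaluating both sides on arbitrary pairs $\xi_{1}+v_{1},\xi_{2}+v_{2}\in B^{*}$ and expanding via the semi-direct product rules (\ref{eq:SDASSO}) and (\ref{eq:SDLie}) together with the dual representations $-\mu^{*},\rho^{*}$, the pure $V\times V$ components reduce exactly to Eqs.~(\ref{eq:O1}) and (\ref{eq:O2}), while the mixed $A^{*}\times V$ and $A^{*}\times A^{*}$ components decouple into identities that are automatic from the compatibility Eq.~(\ref{eq:CompStru}) of $(\mu,\rho,V)$ on $(A,P)$ and the dual-representation Eqs.~(\ref{eq:dualrep1})--(\ref{eq:dualrep3}) satisfied by $\beta$. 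Reversing this reduction, the weak $\mathcal{O}$-operator axioms together with $T\beta=QT$ are enough to recover all three operator identities, establishing the equivalence in part~(1).

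The main obstacle is the bookkeeping in the second paragraph: the multiplication and Lie bracket on $B$ couple $A$ with $V^{*}$ through $-\mu^{*}$ and $\rho^{*}$, and $\mathcal{L}^{*}_{B}$, $\mathrm{ad}^{*}_{B}$ couple $A^{*}$ with $V$ through the transposed representations, so each side of (\ref{eq:Q-PYBE1}) and (\ref{eq:Q-PYBE2}) splits into a number of terms distributed across the four sub-components of $B\otimes B$. One must carefully verify that only the pure $V\times V$ components (those involving $T(v_{i})$) yield the new conditions (\ref{eq:O1}) and (\ref{eq:O2}) on $T$, while all mixed contributions cancel by virtue of the representation axioms. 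This is essentially the same reduction that underlies the standard $\mathcal{O}$-operator characterizations of antisymmetric solutions of the AYBE and CYBE in semi-direct product algebras (\cite{Bai2010, Kup}), which I would invoke to streamline the computation on the $A$-valued and $V^{*}$-valued pieces separately.

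For part~(2), the additional hypotheses $(A,P)$ is dually represented by $Q$, $(\mu,\rho,\alpha,V)$ is a representation of $(A,P)$, and $T\beta=QT$ combine with part~(1) to yield that $r=T-\tau(T)$ is an antisymmetric solution of the $(Q+\alpha^{*})$-RPYBE in $B$. Under the further Eqs.~(\ref{eq:cond1}) and (\ref{eq:cond2}), the implication (3)$\Rightarrow$(2) of Theorem~\ref{pro:eqGPA} immediately gives that $Q+\alpha^{*}$ dually represents $B$. Applying Corollary~\ref{cor:GPA-bi} to $B$ with its dually representing linear map $Q+\alpha^{*}$ and the antisymmetric solution $r$ of the $(Q+\alpha^{*})$-RPYBE then produces the desired coboundary relative Poisson bialgebra $(A\ltimes_{-\mu^{*},\rho^{*}}V^{*},\cdot,[-,-],\Delta,\delta,P+\beta^{*},Q+\alpha^{*})$.
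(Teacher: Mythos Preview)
Your proposal is correct and follows essentially the same route as the paper: invoke the known equivalences between antisymmetric solutions of the CYBE/AYBE in a semi-direct product and $\mathcal{O}$-operator conditions on $T$ (the paper cites \cite{Bai2007,Bai2010}, you cite \cite{Bai2010,Kup}), and then verify the extra derivation condition directly. For part~(2) you correctly combine part~(1), the implication (3)$\Rightarrow$(2) of Theorem~\ref{pro:eqGPA}, and Corollary~\ref{cor:GPA-bi}; the paper does the same more tersely.

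One small correction to your narrative: the cancellation of the mixed $A^{*}\times V$ and $A^{*}\times A^{*}$ components in (\ref{eq:Q-PYBE1})--(\ref{eq:Q-PYBE2}) does \emph{not} require the compatibility condition (\ref{eq:CompStru}) or the dual-representation identities (\ref{eq:dualrep1})--(\ref{eq:dualrep3}) for $\beta$. The AYBE reduction lives purely in the commutative associative algebra $A\ltimes_{-\mu^{*}}V^{*}$ and the CYBE reduction purely in the Lie algebra $A\ltimes_{\rho^{*}}V^{*}$; neither sees $P$, $\beta$, or the interaction between $\mu$ and $\rho$. The standard results you cite already give the full equivalences (\ref{eq:Q-PYBE1})$\Leftrightarrow$(\ref{eq:O2}) and (\ref{eq:Q-PYBE2})$\Leftrightarrow$(\ref{eq:O1}) without any of that extra structure. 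Since you ultimately defer to those results anyway, this does not create a gap, but the sentence attributing the cancellation to (\ref{eq:CompStru}) and (\ref{eq:dualrep1})--(\ref{eq:dualrep3}) should be dropped.
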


\begin{proof}
(1). By \cite{Bai2007}, $r=T-\tau(T)$ satisfies the CYBE in the
Lie algebra $A\ltimes_{\rho^*}V^*$ if and only if
Eq.~(\ref{eq:O1}) holds and by \cite{Bai2010}, $r$ satisfies the
AYBE in the commutative associative algebra $A\ltimes_{-\mu^*}V^*$
if and only if Eq.~(\ref{eq:O2}) holds.

Let {$\{v_1,\cdots,v_m\}$} be a basis of $V$ and
$\{v^{*}_{1},\cdots, v^{*}_{m}\}$ be the dual basis. Then
$T=\sum\limits^{m}_{i=1}T(v_{i})\otimes v^{*}_{i}\in
(A\ltimes_{-\mu^{*},\rho^{*}}V^{*}) \otimes
(A\ltimes_{-\mu^{*},\rho^{*}}V^{*})$. Hence
$$r=T-\tau(T)=\sum\limits_{i=1}^{m}T(v_{i})\otimes v^{*}_{i}-v^{*}_{i}\otimes T(v_{i}).$$
Note that
$$((P+\beta^{*})\otimes \mathrm{id})r=\sum_{i=1}^{m}(P( T(v_{i}))\otimes v^{*}_{i}-\beta^{*}(v^{*}_{i})\otimes T(v_{i})),$$
$$(\mathrm{id}\otimes(Q+\alpha^{*}))r=\sum_{i=1}^{m}( T(v_{i})\otimes \alpha^{*}(v^{*}_{i})-v^{*}_{i}\otimes Q(T(v_{i}))).$$
Further
\begin{eqnarray*}
\sum_{i=1}^{m}\beta^{*}(v^{*}_{i})\otimes
T(v_{i})&=&\sum_{i=1}^{m}\sum_{j=1}^{m}\langle
\beta^{*}(v^{*}_{i}),v_{j}\rangle v_{j}^{*}\otimes T(v_{i})
    =\sum_{i=1}^{m}\sum_{j=1}^{m}v^{*}_{j}\otimes\langle v^{*}_{i},\beta(v_{j})\rangle T(v_{i})\\
    &=&\sum_{i=1}^{m}\sum_{j=1}^{m}v^{*}_{i}\otimes T(\langle \beta(v_{i}),v^{*}_{j}\rangle v_{j})=\sum_{i=1}^{m}v^{*}_{i}\otimes T(\beta(v_{i})),
\end{eqnarray*}
and similarly, $\sum\limits_{i=1}^{m}T(v_{i})\otimes\alpha^{*}(v^{*}_{i})=\sum\limits_{i=1}^{m}T(\alpha(v_{i}))\otimes v^{*}_{i}$.
Therefore $((P+\beta^{*})\otimes \mathrm{id})r=(\mathrm{id}\otimes(Q+\alpha^{*}))r$ if and only if $PT=T\alpha$ and $T\beta=QT$. Thus the conclusion holds.

(2) It follows from Item (1) and Theorem~\ref{pro:eqGPA}.
\end{proof}

Therefore starting from an $\mathcal O$-operator $T$ of a relative
Poisson algebra $(A,P)$ associated to a representation
$(\mu,\rho,\alpha,V)$, one gets an antisymmetric solution of the
$(Q+\alpha^{*})$-RPYBE in the relative Poisson algebra
$(A\ltimes_{-\mu^{*},\rho^{*}}V^{*},P+\beta^{*})$ for suitable
linear maps $\beta$ and $Q$ and hence gives rise to a relative
Poisson bialgebra on the latter. There are some natural choices
of $Q$ and $\beta$. For example, assume that $\beta=\pm \alpha,
Q=\pm P$ or $\beta=\theta \alpha^{-1}, Q=\theta P^{-1}$ for $0\ne
\theta\in K$ when $\alpha$ and $P$ are invertible. Note that in
these cases $T\beta=QT$ automatically and then one can get the
other corresponding constraint conditions due to
Theorem~\ref{thm:GPBA}. In particular, when $\beta=-\alpha$ and
$Q=-P$, there is not any constraint condition.

%  And by Corollary \ref{cor:automap}, we see that $(-\mu^{*},\rho^{*},-\alpha^{*};V^{*})$ is automatically a representation of $A$. In this case, we have:

\begin{cor}\label{cor:cases}
Let $(\mu,\rho,\alpha,V)$ be a representation of a relative
Poisson algebra $(A,P)$ and  $T:V\rightarrow A$ be an
$\mathcal{O}$-operator of $(A,P)$ associated to
$(\mu,\rho,\alpha,V)$. Then $r=T-\tau(T)$ is an antisymmetric
solution of the $(-P+\alpha^{*})$-RPYBE in the relative Poisson
algebra $(A\ltimes_{-\mu^{*},\rho^{*}}V^{*},P-\alpha^{*})$.
Further the relative Poisson algebra
$(A\ltimes_{-\mu^{*},\rho^{*}}V^{*},P-\alpha^{*})$ is dually
represented by $-P+\alpha^{*}$ and there is a relative Poisson
bialgebra
$(A\ltimes_{-\mu^{*},\rho^{*}}V^{*},\cdot,[-,-],\Delta,\delta,P-\alpha^{*},-P+\alpha^{*})$,
where the linear maps $\Delta$ and $\delta$ are defined
respectively by Eqs.~(\ref{AssoCob}) and (\ref{eq:LieCob}) through
$r=T-\tau(T)$.
\end{cor}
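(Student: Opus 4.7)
The plan is to obtain the corollary as a direct specialization of Theorem~\ref{thm:GPBA}(2) by making the natural choices $\beta = -\alpha$ and $Q = -P$. Under these choices, the relative Poisson algebra $(A\ltimes_{-\mu^{*},\rho^{*}}V^{*}, P+\beta^{*})$ of the theorem becomes $(A\ltimes_{-\mu^{*},\rho^{*}}V^{*}, P-\alpha^{*})$, and the dual-representing map $Q+\alpha^{*}$ becomes $-P+\alpha^{*}$, matching the statement of the corollary.

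To apply Theorem~\ref{thm:GPBA}(2), I need to verify the four input hypotheses. First, that $\beta=-\alpha$ dually represents $(A,P)$ on $(\mu,\rho,V)$, and that $Q=-P$ dually represents $(A,P)$: both follow from Corollary~\ref{cor:automap} applied to the representation $(\mu,\rho,\alpha,V)$ (respectively, applied to the adjoint representation $(\mathcal{L},\mathrm{ad},P,A)$). Second, that $T\beta = QT$: this reads $-T\alpha = -PT$, which is precisely the defining identity $PT = T\alpha$ from Eq.~(\ref{eq:O3}) for $T$ as an $\mathcal O$-operator. Third, that $T$ is an $\mathcal O$-operator of $(A,P)$ associated to $(\mu,\rho,\alpha,V)$, which is given by hypothesis.

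The remaining conditions needed to invoke the second half of Theorem~\ref{thm:GPBA}(2), so as to conclude that $-P+\alpha^{*} = Q+\alpha^{*}$ dually represents the semi-direct product and that one obtains the claimed relative Poisson bialgebra, are Eqs.~(\ref{eq:cond1}) and (\ref{eq:cond2}) specialized to $\beta=-\alpha$, $Q=-P$. Substituting, Eq.~(\ref{eq:cond1}) becomes
\begin{equation*}
-\mu(P(x))v - \mu(x)\alpha(v) + \alpha(\mu(x)v) = 0,
\end{equation*}
which is exactly Eq.~(\ref{eq:rep1}), while Eq.~(\ref{eq:cond2}) becomes Eq.~(\ref{eq:rep2}); both hold because $(\mu,\rho,\alpha,V)$ is a representation of $(A,P)$. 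No genuine obstacle arises, since the choice $\beta=-\alpha$, $Q=-P$ was singled out in the discussion preceding the corollary precisely because it removes all constraint conditions.

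With every hypothesis of Theorem~\ref{thm:GPBA}(2) satisfied, I conclude: $r=T-\tau(T)$ is an antisymmetric solution of the $(-P+\alpha^{*})$-RPYBE in $(A\ltimes_{-\mu^{*},\rho^{*}}V^{*}, P-\alpha^{*})$, the linear map $-P+\alpha^{*}$ dually represents this relative Poisson algebra, and therefore the data $(\Delta,\delta)$ defined from $r$ by Eqs.~(\ref{AssoCob}) and (\ref{eq:LieCob}) assemble into a coboundary relative Poisson bialgebra
$(A\ltimes_{-\mu^{*},\rho^{*}}V^{*},\cdot,[-,-],\Delta,\delta,P-\alpha^{*},-P+\alpha^{*})$, as claimed.
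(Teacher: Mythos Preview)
Your proof is correct and follows essentially the same approach as the paper: specialize Theorem~\ref{thm:GPBA}(2) to $\beta=-\alpha$ and $Q=-P$, invoke Corollary~\ref{cor:automap} for the dual-representation conditions, and observe that Eqs.~(\ref{eq:cond1})--(\ref{eq:cond2}) reduce to the representation axioms (\ref{eq:rep1})--(\ref{eq:rep2}). The paper's proof is terser but identical in substance.
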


\begin{proof}
By Corollary \ref{cor:automap},
$(-\mu^{*},\rho^{*},-\alpha^{*},V^{*})$ is a representation of
$(A,P)$ and $-P$ dually represents $(A,P)$. Moreover
Eqs.~(\ref{eq:cond1}) and (\ref{eq:cond2}) hold when
$\beta=-\alpha, Q=-P$. Hence the conclusion follows from
Theorem~\ref{thm:GPBA} when $\beta=-\alpha, Q=-P$.
\end{proof}

\subsection{Relative pre-Poisson algebras}\

Recall the notions of
Zinbiel algebras (\cite{Lod}) and pre-Lie algebras (\cite{Bur}).

\begin{defi}
A \textbf{Zinbiel algebra} is a vector space $A$ equipped with a
bilinear operation  $\star:A\otimes A\rightarrow A$ such that
\begin{equation}\label{eq:Zinbiel}
x\star(y\star z)=(y\star x)\star z+(x\star y)\star z, \;\;\forall x,y,z\in A.
\end{equation}
\end{defi}

Let $(A,\star)$ be a Zinbiel algebra. Define a new bilinear
operation $\cdot$ on $A$ by
\begin{equation}\label{eq:ZintoAss}
x\cdot y=x\star y+y\star x,\;\;\forall x,y\in A.
\end{equation}
Then $(A,\cdot)$ is a commutative associative algebra. Moreover,
$(\mathcal{L}_{\star},A)$ is a representation of the commutative
associative algebra $(A,\cdot)$, where
$\mathcal{L}_{\star}(x)y=x\star y$ for all $ x,y\in A.$

\begin{defi}
A \textbf{pre-Lie algebra} is a vector space $A$ equipped with a
bilinear operation $\circ:A\otimes A\rightarrow A$ such that
\begin{equation}\label{eq:preLie}
(x\circ y)\circ z-x\circ(y\circ z)=(y\circ x)\circ z-y\circ(x\circ z), \;\;\forall x,y,z\in A.
\end{equation}
\end{defi}

Let $(A,\circ)$ be a pre-Lie algebra. Define a new bilinear
operation $[-,-]$ on $A$ by
\begin{equation}\label{eq:preLietoLie}
[x,y]=x\circ y-y\circ x,\;\;\forall x,y\in A.
\end{equation}
Then $(A,[-,-])$ is a Lie algebra. Moreover,
$(\mathcal{L}_{\circ},A)$ is a representation of the Lie algebra
$(A,[-,-])$, where $\mathcal{L}_{\circ}(x)y=x\circ y$ for all $
x,y\in A.$

\begin{defi}\label{de:GPPA}
A \textbf{relative pre-Poisson algebra} is a quadruple
$(A,\star,\circ,P)$, where $(A,\star)$ is a Zinbiel algebra,
$(A,\circ)$ is a pre-Lie algebra,  $P:A\rightarrow A$ is a
derivation of both $(A,\star)$ and $(A,\circ)$, that is, for all
$x,y\in A,$
\begin{equation}\label{eq:GPPA1}
P(x\star y)=P(x)\star y+x\star P(y),
\end{equation}
\begin{equation}\label{eq:GPPA2}
P(x\circ y)=P(x)\circ y+x\circ P(y),
\end{equation}
and the following compatible conditions are satisfied:
\begin{equation}\label{eq:GPPA3}
(x\star y+y\star x)\circ z-x\star(y\circ z)-y\star(x\circ z)+(x\star y+y\star x)\star P(z)=0,
\end{equation}
\begin{equation}\label{eq:GPPA4}
y\circ(x\star z)-x\star(y\circ z)+(x\circ y-y\circ x)\star z-(x\star P(y)+P(y)\star x)\star z=0,
\end{equation}
for all $x,y,z\in A$.
\end{defi}

\begin{rmk}
Recall a pre-Poisson algebra (\cite{Agu2000.0}) is a triple $(A,\star,\circ)$, where $(A,\star)$ is a Zinbiel algebra and $(A,\circ)$ is a pre-Lie algebra such that the following conditions hold:
$$(x\circ y-y\circ x)\star z=x\star(y\circ z)-y\circ(x\star z),$$
$$(x\star y+y\star x)\circ z=x\star(y\circ z)+y\star(x\circ z),$$
for all $x,y,z\in A$.
%Then by Definition \ref{de:GPPA},
Thus any pre-Poisson algebra is a relative pre-Poisson algebra
with the derivation $P=0$.
\end{rmk}

%Generalized pre-Poisson algebras can be constructed form differential Zinbiel algebras and commutative associative algebras with bijective derivations as follows.

There is the following construction of relative pre-Poisson
algebras from Zinbiel algebras with their derivations, which is an
analogue of Example~\ref{ex:GPA}.

\begin{pro}\label{pro:generalized pre-Poisson1}
Let $(A,\star)$ be a Zinbiel algebra and $P$ be a derivation of
$(A,\star)$. Define a new bilinear operation $\circ:A\otimes
A\rightarrow A$ by
\begin{equation}
x\circ y=x\star P(y)-P(x)\star y,\;\;\forall x,y\in A.
\end{equation}
Then $(A,\circ)$ is a pre-Lie algebra and $(A,\star,\circ,P)$ is a relative pre-Poisson algebra.
\end{pro}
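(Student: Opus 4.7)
The plan is to verify directly the four axioms needed for $(A,\star,\circ,P)$ to be a relative pre-Poisson algebra: the pre-Lie identity (\ref{eq:preLie}) for $\circ$, the derivation condition (\ref{eq:GPPA2}), and the two compatibility conditions (\ref{eq:GPPA3}) and (\ref{eq:GPPA4}). The two tools that will do all the work are the Zinbiel identity in the form $a\star(b\star c)=(a\star b)\star c+(b\star a)\star c$, which eliminates left-nested products in favor of right-nested ones, and the hypothesis that $P$ is a derivation of $\star$.

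For the pre-Lie identity, first expand $(x\circ y)\circ z$ using the definition of $\circ$ and the derivation rule; this produces only right-nested terms of the shape $(u\star v)\star w$. Next expand $x\circ(y\circ z)=x\star P(y\circ z)-P(x)\star(y\circ z)$; here every term is left-nested, so rewrite each via the Zinbiel identity to put it in the form $(u\star v)\star w$. After cancellation the difference $(x\circ y)\circ z-x\circ(y\circ z)$ collapses to
\[
\bigl(x\star P(y)+y\star P(x)\bigr)\star P(z)-\bigl(x\star y+y\star x\bigr)\star P^2(z)+\bigl(P^2(x)\star y+P^2(y)\star x\bigr)\star z-\bigl(P(x)\star P(y)+P(y)\star P(x)\bigr)\star z,
\]
which is manifestly invariant under $x\leftrightarrow y$, giving (\ref{eq:preLie}).

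The derivation property (\ref{eq:GPPA2}) is immediate:
\[
P(x\circ y)=P(x\star P(y))-P(P(x)\star y)=P(x)\star P(y)+x\star P^2(y)-P^2(x)\star y-P(x)\star P(y),
\]
and the surviving two terms are exactly $P(x)\circ y+x\circ P(y)$.

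For the two compatibility conditions, the strategy is the same. In (\ref{eq:GPPA3}), expand $(x\star y+y\star x)\circ z$ by the definition of $\circ$ and the derivation rule (producing only right-nested terms), then expand $x\star(y\circ z)$ and $y\star(x\circ z)$ and apply the Zinbiel identity to each left-nested term; the four contributions telescope to zero. The verification of (\ref{eq:GPPA4}) is analogous: expand $y\circ(x\star z)=y\star P(x\star z)-P(y)\star(x\star z)$, use the derivation rule on $P(x\star z)$ and the Zinbiel identity on the two left-nested products, do the same for $x\star(y\circ z)$, and substitute $\circ$ into the remaining two terms; every $(u\star v)\star w$ appears with coefficients summing to zero. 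The only real obstacle throughout is the bookkeeping in the pre-Lie step; once the Zinbiel rewriting is carried out carefully there, the remaining three axioms are short direct cancellations.
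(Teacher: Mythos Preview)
Your proof is correct and follows essentially the same approach as the paper: expand everything via the definition of $\circ$, the derivation property of $P$, and the Zinbiel identity, and watch the terms cancel. Your organization of the pre-Lie step---reducing the associator $(x\circ y)\circ z-x\circ(y\circ z)$ to a single expression that is visibly symmetric in $x,y$---is a slightly cleaner presentation than the paper's term-by-term matching of the two sides, but the underlying computation is the same.
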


\begin{proof}
Let $x,y,z\in A$. Then we have
\begin{small}
\begin{eqnarray*}
    (x\circ y)\circ z-x\circ(y\circ z)&=&
    (x\star P(y))\star P(z)-(P(x)\star y)\star P(z)-(x\star P^{2}(y))\star z+(P^{2}(x)\star y)\star z\\
    &&-x\star(y\star P^{2}(z))+x\star(P^{2}(y)\star z)
    +P(x)\star(y\star P(z))-P(x)\star(P(y)\star z),\\
    (y\circ x)\circ z-y\circ(x\circ z)
    &=&(y\star P(x))\star P(z)-(P(y)\star x)\star P(z)-(y\star P^{2}(x))\star z+(P^{2}(y)\star x)\star z\\
    &&-y\star(x\star P^{2}(z))+y\star(P^{2}(x)\star z)+P(y)\star(x\star P(z))-P(y)\star(P(x)\star z).
\end{eqnarray*}
\end{small}
Note that by Eq.~(\ref{eq:Zinbiel}), we have
\begin{eqnarray*}
&&-x\star(y\star P^{2}(z))=-y\star(x\star P^{2}(z)),\\
&&-P(x)\star (P(y)\star z)=-P(y)\star(P(x)\star z),\\
&&-(P(x)\star y)\star P(z)+P(x)\star(y\star P(z))=(y\star P(x))\star P(z),\\
&&(x\star P(y))\star P(z)=-(P(y)\star x)\star P(z)+P(y)\star(x\star P(z)), \\
&&-(x\star P^{2}(y))\star z+x\star(P^{2}(y)\star z)=(P^{2}(y)\star x)\star z, \\
&&(P^{2}(x)\star y)\star z=-(y\star P^{2}(x))\star z+y\star(P^{2}(x)\star z).
\end{eqnarray*}
Thus Eq.~(\ref{eq:preLie}) holds, that is, $(A,\circ)$ is a pre-Lie algebra. Similarly, we show that
$P$ is a derivation of $(A,\circ)$ and Eqs.~(\ref{eq:GPPA3}) and (\ref{eq:GPPA4}) hold. Hence $(A,\star,\circ,P)$ is a relative pre-Poisson algebra.
\end{proof}

\begin{ex}\label{ex:generalized pre-Poisson1}
Let $(A,\star)$ be a 3-dimensional Zinbiel algebra with a basis
$\{e_1,e_2,e_3\}$ whose non-zero products (\cite{Kay2}) are given
as follows.
$$e_{1}\star e_{1}=e_{3},\; e_{1}\star e_{2}=e_{3}.$$
Define a linear map $P:A\rightarrow A$ as
$$P(e_{1})=e_{1}+e_{2},\;P(e_{2})=2e_{2},\;P(e_{3})=3e_{3}.$$
Then $P$ is a derivation of $(A,\star)$.
Hence %by Proposition \ref{pro:generalized pre-Poisson1},
there is a relative pre-Poisson algebra $(A,\star,\circ,P)$
with the following non-zero products of the pre-Lie algebra
$(A,\circ)$:
$$e_{1}\circ e_{1}=e_{3},\;e_{1}\circ e_{2}=e_{3}.$$
\end{ex}

\begin{pro}\label{pro:O-operator}
Let $(A,\star,\circ,P)$ be a relative pre-Poisson algebra. Then $(A,\cdot,[-,-],$ $P)$ is a relative Poisson algebra,
where $\cdot,[-,-]: A\otimes A\rightarrow A$ are defined by Eqs.~(\ref{eq:ZintoAss}) and (\ref{eq:preLietoLie}) respectively.
 Moreover, $(\mathcal{L}_{\star},\mathcal{L}_{\circ},P,A)$ is a representation of the relative Poisson algebra $(A,\cdot,[-,-],P)$ and hence
the identity map ${\rm id}$ is an $\mathcal O$-operator of $(A,\cdot,[-,-],P)$ associated to $(\mathcal{L}_{\star},\mathcal{L}_{\circ},P,A)$.
\end{pro}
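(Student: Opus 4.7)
The plan is to verify, in turn, the relative Poisson algebra axioms on $(A,\cdot,[-,-],P)$, then the representation axioms for $(\mathcal{L}_{\star},\mathcal{L}_{\circ},P,A)$, and finally the three $\mathcal{O}$-operator identities for $\mathrm{id}$; the last two parts essentially amount to observing that the defining identities of a relative pre-Poisson algebra are precisely the representation axioms written out.

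First, I would recall the classical facts: a Zinbiel algebra $(A,\star)$ gives rise to a commutative associative algebra $(A,\cdot)$ via Eq.~(\ref{eq:ZintoAss}), and a pre-Lie algebra $(A,\circ)$ gives rise to a Lie algebra $(A,[-,-])$ via Eq.~(\ref{eq:preLietoLie}); moreover $(\mathcal{L}_{\star},A)$ and $(\mathcal{L}_{\circ},A)$ are representations of these induced structures, the first via the Zinbiel identity $x\star(y\star z)=(x\star y+y\star x)\star z$ and the second by standard pre-Lie theory. That $P$ is a derivation of $(A,\cdot)$ and of $(A,[-,-])$ follows by symmetrizing (resp.\ antisymmetrizing) Eqs.~(\ref{eq:GPPA1}) and (\ref{eq:GPPA2}).

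Next, the relative Leibniz rule (\ref{eq:GLR}) is the one nontrivial identity to establish. My plan is: expand $[z,x\cdot y]=z\circ(x\star y)+z\circ(y\star x)-(x\star y+y\star x)\circ z$; use Eq.~(\ref{eq:GPPA3}) to replace $(x\star y+y\star x)\circ z$ by $x\star(y\circ z)+y\star(x\circ z)-(x\cdot y)\star P(z)$; and use Eq.~(\ref{eq:GPPA4}) (with the roles permuted) to rewrite $z\circ(x\star y)$ and $z\circ(y\star x)$. After substitution and collecting $\star$-terms, one obtains $[z,x]\cdot y+x\cdot[z,y]$ plus the residual $((x\cdot P(z))\star y+(y\cdot P(z))\star x+(x\cdot y)\star P(z))$. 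To match this with $x\cdot y\cdot P(z)=(x\cdot y)\star P(z)+P(z)\star(x\cdot y)$ one needs
\[
(x\cdot P(z))\star y+(y\cdot P(z))\star x=P(z)\star(x\star y)+P(z)\star(y\star x),
\]
which is exactly a double application of the Zinbiel identity $a\star(b\star c)=(a\star b+b\star a)\star c$. This rearrangement is the one place where genuine computation is required, and I expect it to be the main obstacle.

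For the representation $(\mathcal{L}_{\star},\mathcal{L}_{\circ},P,A)$ of $(A,P)$, observe the dictionary: the compatibility condition (\ref{eq:CompStru}) is exactly Eq.~(\ref{eq:GPPA4}) after the substitution $(\mu,\rho,\alpha)=(\mathcal{L}_{\star},\mathcal{L}_{\circ},P)$; Eqs.~(\ref{eq:rep1}) and (\ref{eq:rep2}) reduce to the derivation conditions (\ref{eq:GPPA1}) and (\ref{eq:GPPA2}); and Eq.~(\ref{eq:rep3}) reduces to (\ref{eq:GPPA3}). Combined with the preceding verification that $(\mathcal{L}_{\star},A)$ and $(\mathcal{L}_{\circ},A)$ are representations of the component algebras, this yields the claim. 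Finally, that $\mathrm{id}$ is an $\mathcal{O}$-operator is tautological: Eq.~(\ref{eq:O1}) becomes $u\cdot v=u\star v+v\star u$, Eq.~(\ref{eq:O2}) becomes $[u,v]=u\circ v-v\circ u$, and Eq.~(\ref{eq:O3}) becomes $P=P$, all of which hold by definition.
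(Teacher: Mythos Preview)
Your proposal is correct and follows essentially the same route as the paper: both verify the relative Leibniz rule by expanding $[z,x\cdot y]$ in terms of $\star,\circ$, applying Eqs.~(\ref{eq:GPPA3}) and (\ref{eq:GPPA4}), and reducing the residual term to the Zinbiel identity $(a\cdot b)\star c=a\star(b\star c)$; and both observe that the representation axioms (\ref{eq:CompStru})--(\ref{eq:rep3}) for $(\mathcal{L}_{\star},\mathcal{L}_{\circ},P,A)$ are literally Eqs.~(\ref{eq:GPPA1})--(\ref{eq:GPPA4}) under the obvious substitution. The only difference is organizational: the paper expands the full difference $[z,x\cdot y]-[z,x]\cdot y-x\cdot[z,y]-x\cdot y\cdot P(z)$ at once, whereas you build up $[z,x\cdot y]$ term by term, but the final Zinbiel step is identical.
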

\begin{proof}
Let $x,y,z\in A$. By Eqs.~(\ref{eq:GPPA1}) and (\ref{eq:GPPA2}),
$P$ is a derivation of both $(A,\cdot)$ and $(A,[-,-])$. Moreover,
by Eqs.~(\ref{eq:Zinbiel}), (\ref{eq:GPPA3}) and (\ref{eq:GPPA4}),
we have {\small\begin{eqnarray*}
&&[z,x\cdot y]-x\cdot[z,y]-y\cdot[z,x]-x\cdot y\cdot P(z)\\
&&=(z\circ(x\star y)+z\circ(y\star x)-(x\star y)\circ z-(y\star x)\circ z)-(x\star(z\circ y)-x\star(y\circ z)\\
&&\hspace{0.5cm}+(z\circ y)\star x-(y\circ z)\star x)-(y\star(z\circ x)-y\star(x\circ z)+(z\circ x)\star y-(x\circ z)\star y)\\
&&\hspace{0.5cm}-((x\star y+y\star x)\star P(z)+P(z)\star(x\star y+y\star x))\\
&&=(x\star P(z)+P(z)\star x)\star y+(y\star P(z)+P(z)\star y)\star x-P(z)\star(x\star y+y\star x)=0.
\end{eqnarray*}}
Thus $(A,\cdot,[-,-],P)$ is a relative Poisson algebra. Moreover, by Eqs.~(\ref{eq:GPPA1})-(\ref{eq:GPPA4}), we have
\begin{eqnarray*}
&&P(\mathcal{L}_{\star}(x)y)=\mathcal{L}_{\star}(P(x))y+\mathcal{L}_{\star}(x)P(y),\\
&&P(\mathcal{L}_{\circ}(x)y)=\mathcal{L}_{\circ}(P(x))y+\mathcal{L}_{\circ}(x)P(y),\\
&&\mathcal{L}_{\circ}(x\cdot y)z-\mathcal{L}_{\star}(x)\mathcal{L}_{\circ}(y)z-\mathcal{L}_{\star}(y)\mathcal{L}_{\circ}(x)z+\mathcal{L}_{\star}(x\cdot y)P(z)=0,\\
&&\mathcal{L}_{\circ}(y)\mathcal{L}_{\star}(x)z-\mathcal{L}_{\star}(x)\mathcal{L}_{\circ}(y)z+\mathcal{L}_{\star}([x,y])z-\mathcal{L}_{\star}(x\cdot P(y))z=0.
\end{eqnarray*}
Thus $(\mathcal{L}_{\star},\mathcal{L}_{\circ},P,A)$ is a representation of the relative Poisson algebra $(A,\cdot,[-,-],P)$.
 The rest of the conclusion is obvious.
\end{proof}

\begin{defi} Let $(A,\star,\circ,P)$ be a relative pre-Poisson algebra.
Define two bilinear operations
$\cdot,[-,-]: A\otimes A\rightarrow A$ by Eqs.~(\ref{eq:ZintoAss}) and (\ref{eq:preLietoLie}) respectively.
Then $(A,\cdot,[-,-],P)$ is called the \textbf{sub-adjacent relative Poisson algebra} of $(A,\star,\circ,P)$, and $(A,\star,\circ,P)$ is called a
\textbf{compatible relative pre-Poisson algebra} structure on the relative Poisson algebra $(A,\cdot,[-,-],P)$.
\end{defi}

By Corollary \ref{cor:cases} and Proposition \ref{pro:O-operator},
we obtain the following construction of antisymmetric solutions of
the RPYBE and hence relative Poisson bialgebras from relative
pre-Poisson algebras.

\begin{pro}\label{pro:final}
Let $(A,\star,\circ,P)$ be a relative pre-Poisson algebra and
$(A,\cdot,[-,-],P)$  be the sub-adjacent relative Poisson
algebra. Let $\{e_1,\cdots, e_n\}$ be a basis of $A$ and
$\{e^{*}_{1},\cdots, e^{*}_{n}\}$ be the dual basis. Then
\begin{equation}\label{eq:pro:final}
r=\sum^{n}_{i=1}(e_{i}\otimes e^{*}_{i}-e^{*}_{i}\otimes e_{i})
\end{equation}
is  an antisymmetric  solution of the $(-P+P^{*})$-RPYBE in the
relative Poisson algebra $(A$
$\ltimes_{-\mathcal{L}^{*}_{\star},\mathcal{L}^{*}_{\circ}}$
$A^{*}$, $P-P^{*})$. Further the relative Poisson algebra $(A$
$\ltimes_{-\mathcal{L}^{*}_{\star},\mathcal{L}^{*}_{\circ}}A^{*},P-P^{*})$
is dually represented by $(-P+P^*)$ and hence there is a relative
Poisson bialgebra
$(A\ltimes_{-\mathcal{L}^{*}_{\star},\mathcal{L}^{*}_{\circ}}A^{*},\cdot,[-,-],\Delta,\delta,P-P^{*},-P+P^{*})$,
where the linear maps $\Delta$ and $\delta$ are defined
respectively by Eqs.~(\ref{AssoCob}) and (\ref{eq:LieCob}) through
$r$.
\end{pro}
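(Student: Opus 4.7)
The plan is to apply Proposition \ref{pro:O-operator} and Corollary \ref{cor:cases} in sequence, with no genuine computation required beyond what those results already provide. First, by Proposition \ref{pro:O-operator}, the sub-adjacent relative Poisson algebra $(A,\cdot,[-,-],P)$ admits $(\mathcal{L}_{\star},\mathcal{L}_{\circ},P,A)$ as a representation, and the identity map $\mathrm{id}_A:A\to A$ is an $\mathcal{O}$-operator of $(A,\cdot,[-,-],P)$ associated to this representation.

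Next, I would invoke Corollary \ref{cor:cases} with the data $V=A$, $\mu=\mathcal{L}_{\star}$, $\rho=\mathcal{L}_{\circ}$, $\alpha=P$, and $T=\mathrm{id}_A$. Since $\alpha^{*}=P^{*}$, this gives immediately that $r:=T-\tau(T)$ is an antisymmetric solution of the $(-P+P^{*})$-RPYBE in the relative Poisson algebra $(A\ltimes_{-\mathcal{L}^{*}_{\star},\mathcal{L}^{*}_{\circ}}A^{*},P-P^{*})$, that this semi-direct product relative Poisson algebra is dually represented by $-P+P^{*}$, and therefore, via the linear maps $\Delta$ and $\delta$ built from $r$ through Eqs.~(\ref{AssoCob}) and (\ref{eq:LieCob}), that a coboundary relative Poisson bialgebra structure arises on $A\ltimes_{-\mathcal{L}^{*}_{\star},\mathcal{L}^{*}_{\circ}}A^{*}$.

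The only remaining step is to verify that the abstract $r=T-\tau(T)$ produced above agrees with the explicit expression in Eq.~(\ref{eq:pro:final}). Under the identification of a linear map $T:V\to A$ with an element of $A\otimes V^{*}$ described by Eq.~(\ref{eq:identify}), the identity $\mathrm{id}_A$ corresponds to $\sum_{i=1}^{n}e_i\otimes e_i^{*}\in A\otimes A^{*}\subset (A\oplus A^{*})\otimes(A\oplus A^{*})$; applying the flip $\tau$ and subtracting then yields $r=\sum_{i=1}^{n}(e_i\otimes e_i^{*}-e_i^{*}\otimes e_i)$, which is exactly Eq.~(\ref{eq:pro:final}). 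Because the substantive content is fully packaged by Proposition \ref{pro:O-operator} together with Corollary \ref{cor:cases}, the proof presents no real obstacle; the only point meriting care is the bookkeeping that matches the map-to-tensor identification with the stated formula.
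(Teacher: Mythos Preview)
Your proof is correct and follows exactly the paper's approach: the paper presents Proposition \ref{pro:final} as an immediate consequence of Proposition \ref{pro:O-operator} and Corollary \ref{cor:cases}, with the explicit tensor form of $r$ coming from the identification used in the proof of Theorem \ref{thm:GPBA}.
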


\section{Relative Poisson bialgebras and Frobenius Jacobi algebras}

We use relative Poisson bialgebras to construct Frobenius
Jacobi algebras.  In particular, there is a construction of Frobenius Jacobi algebras from relative
pre-Poisson algebras. We give an example to illustrate
this construction explicitly.

By Remark \ref{rmk:matchedpairJacobi}, the approach for the
bialgebra theory of relative Poisson algebras in terms of
matched pairs is not available for Jacobi algebras any more.  That
is, for a relative Poisson bialgebra
$(A,\cdot_A,[-,-]_{A},\Delta,\delta,P,$ $Q)$, it is impossible for
the commutative associative algebra structure on $A\oplus A^*$ to
be unital such that both $(A,\cdot_A)$ and $(A^*,\cdot_{A^*})$ are
unital, where $\cdot_{A^*}$ is given by the dual of $\Delta$.
However, it might be possible for the commutative associative
algebra structure on $A\oplus A^*$ to be unital such that one of
$(A,\cdot_A)$ and $(A^*,\cdot_{A^*})$ is unital. Note that in this
case, the induced relative Poisson algebra $A\bowtie A^*$ is a
Jacobi algebra and hence with the bilinear form $\mathcal B_{d}$
defined by Eq.~(\ref{eq:BilinearForm}), it is a Frobenius {Jacobi}
algebra. Explicitly,

\begin{pro}\label{pro:unit}
Let $(A,\cdot_A)$ be a unital commutative associative algebra with
the unit $1_A$ and $\Delta:A\rightarrow A\otimes A$ be a linear
map such that $(A,\cdot_A,\Delta)$ is a commutative and
cocommutative infinitesimal bialgebra. If the induced commutative
associative algebra structure on $A\oplus A^{*}$ is unital, then
the unit is $1_A$. Moreover, $1_A$ is the unit of the commutative
associative algebra structure on $A\oplus A^{*}$ if and only if
%\begin{equation}\label{unit}
$\Delta(1_{A})=0$.
%\end{equation}
In particular, if $(A,\cdot,\Delta)$ is coboundary, then
$\Delta(1_{A})=0$ automatically.
\end{pro}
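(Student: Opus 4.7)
The plan is to work directly with the explicit formula for the induced commutative associative multiplication on $A\oplus A^{*}$ coming from the matched pair of commutative associative algebras $(A,A^{*},-\mathcal{L}_{A}^{*},-\mathcal{L}_{A^{*}}^{*})$ (where $\cdot_{A^{*}}$ is dual to $\Delta$). By Eq.~\eqref{eq:Asso} this reads
\begin{equation*}
(x+a^{*})\cdot(y+b^{*})=x\cdot_{A}y-\mathcal{L}_{A^{*}}^{*}(a^{*})y-\mathcal{L}_{A^{*}}^{*}(b^{*})x+a^{*}\cdot_{A^{*}}b^{*}-\mathcal{L}_{A}^{*}(x)b^{*}-\mathcal{L}_{A}^{*}(y)a^{*}.
\end{equation*}
All three assertions reduce to unpacking this formula on suitable inputs and translating the result back through the definition of $\mathcal{L}^{*}$.

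For the first assertion, suppose $e=u+v^{*}$ is a unit. Computing $e\cdot x$ for arbitrary $x\in A$ and projecting onto the $A^{*}$-component gives $-\mathcal{L}_{A}^{*}(x)v^{*}=0$, which by the definition of the dual action means $\langle v^{*},x\cdot_{A}y\rangle=0$ for all $x,y\in A$. This is the step where unitality of $A$ is essential: since $A\cdot_{A}A=A$, we conclude $v^{*}=0$. The $A$-component equation then simplifies to $u\cdot_{A}x=x$ for every $x$, so $u=1_{A}$.

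For the ``if and only if'' part I would compute $1_{A}\cdot(x+a^{*})$ explicitly. The $A$-component is $x-\mathcal{L}_{A^{*}}^{*}(a^{*})(1_{A})$, and the $A^{*}$-component is $-\mathcal{L}_{A}^{*}(1_{A})a^{*}$. The latter equals $a^{*}$ automatically, because $\langle -\mathcal{L}_{A}^{*}(1_{A})a^{*},y\rangle=\langle a^{*},1_{A}\cdot_{A}y\rangle=\langle a^{*},y\rangle$. So the only obstruction to $1_{A}$ being the unit is the vanishing of $-\mathcal{L}_{A^{*}}^{*}(a^{*})(1_{A})\in A$ for all $a^{*}$. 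Dualizing gives
\begin{equation*}
\langle -\mathcal{L}_{A^{*}}^{*}(a^{*})(1_{A}),b^{*}\rangle=\langle 1_{A},a^{*}\cdot_{A^{*}}b^{*}\rangle=\langle \Delta(1_{A}),a^{*}\otimes b^{*}\rangle,
\end{equation*}
so this obstruction vanishes for all $a^{*},b^{*}$ if and only if $\Delta(1_{A})=0$. Finally, if $(A,\cdot,\Delta)$ is coboundary with $\Delta(x)=(\mathrm{id}\otimes\mathcal{L}(x)-\mathcal{L}(x)\otimes\mathrm{id})r$, then $\mathcal{L}(1_{A})=\mathrm{id}$ immediately forces $\Delta(1_{A})=0$.

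The proof is essentially bookkeeping, and the only conceptually nontrivial step is the deduction of $v^{*}=0$ from $\langle v^{*},A\cdot_{A}A\rangle=0$, which crucially uses the existence of $1_{A}$; everything else is formal manipulation with the dual pairing and the identity $\mathcal{L}(1_{A})=\mathrm{id}$.
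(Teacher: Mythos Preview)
Your proof is correct and follows essentially the same approach as the paper: both work directly with the explicit matched-pair multiplication on $A\oplus A^{*}$ and reduce the unit condition to the vanishing of $-\mathcal{L}_{A^{*}}^{*}(a^{*})1_{A}$, which dualizes to $\Delta(1_{A})=0$. The only minor difference is in the first assertion: the paper invokes the computation from Remark~\ref{rmk:matchedpairJacobi} (multiplying the putative unit by $1_{A}$ and using that $-\mathcal{L}_{A}^{*}(1_{A})=\mathrm{id}_{A^{*}}$), whereas you multiply the putative unit against arbitrary $x\in A$ and use $A\cdot_{A}A=A$; both arguments are equally elementary and yield the same conclusion.
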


\begin{proof} By Remark \ref{rmk:matchedpairJacobi}, if the induced commutative associative
algebra structure on $A\oplus A^{*}$ is unital, then the unit is
$1_A$. Note that the commutative associative algebra structure on
$A\oplus A^{*}$ is given by {\small\begin{equation}\label{ASSOMP}
(x+a^{*})\cdot (y+b^{*})=x\cdot_{A}
y-\mathcal{L}^{*}_{A^*}(a^{*})y-\mathcal{L}^{*}_{A^*}(b^{*})x+a^{*}\cdot_{A^*}
b^{*}-\mathcal{L}^{*}_{A}(x)b^{*}-\mathcal{L}^{*}_{A}(y)a^{*},\;\;\forall
x,y\in A,a^{*},b^{*}\in A^{*},
\end{equation}}
where $\cdot_{A^*}$ is given by the dual of $\Delta$. Then $1_A$
is the unit of the commutative associative algebra structure on
$A\oplus A^{*}$ if and only if
\begin{eqnarray*}
&&1_{A}\cdot a^{*}=-\mathcal{L}^{*}_{A}(1_{A})a^{*}-\mathcal{L}^{*}_{A^*}(a^{*})1_{A}=a^{*},\;\;\forall a^{*}\in A^{*},\\
&&\Longleftrightarrow-\mathcal{L}^{*}_{A^*}(a^{*})1_{A}=0,\;\;\forall a^{*}\in A^{*},\\
&&\Longleftrightarrow\langle \Delta(1_{A}),a^{*}\otimes b^{*}\rangle=0,\;\;\forall a^{*},b^{*}\in A^{*},\\
&&\Longleftrightarrow\Delta(1_{A})=0.
\end{eqnarray*}
In particular, if $(A,\cdot,\Delta)$ is coboundary, that is, there
exists $r=\sum\limits_ia_i\otimes b_i\in A\otimes A$ such that
Eq.~(\ref{AssoCob}) holds, then we have
$$\Delta(1_{A})=({\rm id} \otimes\mathcal{L}(1_{A})-\mathcal{L}(1_{A})\otimes {\rm id})r=\sum_i(a_i\otimes b_i-a_i\otimes b_i)=0.$$
Hence the conclusion holds.
\end{proof}

%Due to the above discussion, to construct Frobenius Jacobi algebras, we resort to relative Poisson bialgebras instead of ``Jacobi bialgebras". Similar to Proposition \ref{pro:unit}, we have the following corollary.

Let $(A,\cdot,[-,-])$ be a Jacobi algebra and  $(A,\cdot,[-,-],
{\rm ad}(1_A))$ be the corresponding unital relative Poisson
algebra. By Corollary \ref{cor:dualadj2}, $Q$ dually represents
$(A,\cdot,[-,-],
{\rm ad}(1_A))$ if and only if $Q=-{\rm ad}(1_A)$. Then we have the following conclusion at the level of Jacobi algebras.

\begin{cor}\label{pro:unit2}
Let $(A,\cdot,[-,-])$ be a Jacobi algebra and  $(A,\cdot,[-,-],
{\rm ad}(1_A))$ be the corresponding unital relative Poisson
algebra. Suppose that
$(A,\cdot,[-,-],\Delta,\delta,\mathrm{ad}(1_{A}),-{\rm ad}(1_A))$ is a
relative Poisson bialgebra. Then the induced relative
Poisson algebra $(A\bowtie A^{*}, \mathrm{ad}(1_{A})-({\rm ad}(1_A))^{*})$ is
unital, that is, it is a Jacobi algebra, if and only if
$\Delta(1_{A})=0$. In this case, with the bilinear form $\mathcal
B_{d}$ defined by Eq.~(\ref{eq:BilinearForm}), it is a Frobenius
Jacobi algebra. In particular, if $(A,\cdot,[-,-],\Delta,\delta$,
$\mathrm{ad}(1_{A})$, $-{\rm ad}(1_A))$ is coboundary, then $\Delta(1_{A})=0$
automatically.
\end{cor}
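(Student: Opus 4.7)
The plan is to reduce the corollary to Proposition~\ref{pro:unit} via the Manin triple interpretation of a relative Poisson bialgebra, exploiting the one-to-one correspondence between unital relative Poisson algebras and Jacobi algebras.

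First I would invoke Corollary~\ref{thm:equivalence3} on the given relative Poisson bialgebra $(A,\cdot,[-,-],\Delta,\delta,\mathrm{ad}(1_{A}),-\mathrm{ad}(1_{A}))$ to produce a Manin triple
\[
((A\bowtie A^{*},\mathrm{ad}(1_{A})-(\mathrm{ad}(1_{A}))^{*},\mathcal{B}_{d}),(A,\mathrm{ad}(1_{A})),(A^{*},-(\mathrm{ad}(1_{A}))^{*})).
\]
In particular $A\bowtie A^{*}$ acquires a relative Poisson algebra structure and $\mathcal{B}_{d}$, defined by Eq.~\eqref{eq:BilinearForm}, is a nondegenerate invariant bilinear form on it; a direct inspection of its defining formula also shows $\mathcal{B}_{d}$ is symmetric.

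Next I would apply Proposition~\ref{pro:unit} to the commutative and cocommutative infinitesimal bialgebra $(A,\cdot,\Delta)$ sitting inside the relative Poisson bialgebra. That proposition immediately yields that the commutative associative algebra $(A\oplus A^{*},\cdot)$ underlying $A\bowtie A^{*}$ is unital if and only if $\Delta(1_{A})=0$, with unit $1_{A}$ when this holds. Coupling this with the one-to-one correspondence between unital relative Poisson algebras and Jacobi algebras recorded after Definition~\ref{de:GPA}, the relative Poisson algebra $A\bowtie A^{*}$ is a Jacobi algebra precisely when $\Delta(1_{A})=0$. In that case the symmetric, nondegenerate, invariant form $\mathcal{B}_{d}$ exhibits it as a Frobenius Jacobi algebra. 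The ``in particular'' clause then follows at once from the coboundary statement at the end of Proposition~\ref{pro:unit}.

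The only subtle point requiring care, and the main obstacle I foresee, is a compatibility check: once $1_{A}$ is identified as the unit of $A\bowtie A^{*}$, the prescribed derivation $\mathrm{ad}(1_{A})-(\mathrm{ad}(1_{A}))^{*}$ on $A\oplus A^{*}$ must coincide with $\mathrm{ad}_{A\bowtie A^{*}}(1_{A})$. This will not require fresh computation: setting $x=y=1$ in Eq.~\eqref{eq:GLR} forces $P=\mathrm{ad}(1)$ in any unital relative Poisson algebra, so the two derivations automatically agree, and this universal fact is precisely what legitimises the transition from ``unital relative Poisson'' to ``Jacobi'' in the conclusion.
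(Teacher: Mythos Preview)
Your proposal is correct and follows essentially the same route as the paper: the paper presents this result as an immediate corollary of Proposition~\ref{pro:unit} (with no separate proof), having first observed that $Q=-\mathrm{ad}(1_A)$ is forced by Corollary~\ref{cor:dualadj2}, and implicitly using the Manin triple equivalence of Corollary~\ref{thm:equivalence3}. Your write-up simply makes these steps explicit, including the compatibility check $P=\mathrm{ad}(1)$ in the unital case, which the paper records after Definition~\ref{de:GPA}.
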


Combining Corollaries~\ref{cor:GPA-bi} and~\ref{pro:unit2}
together, we have the following construction of Frobenius Jacobi
algebras from antisymmetric solutions of the RPYBE in Jacobi algebras.

\begin{cor}\label{cor:con}
Let $(A,\cdot,[-,-])$ be a Jacobi algebra and  $(A,\cdot,[-,-],
{\rm ad}(1_A))$ be the corresponding unital relative Poisson
algebra.  Let
$r\in A\otimes A$ and
$\Delta,\delta:A\rightarrow A\otimes A$ be linear maps defined by
Eqs.~(\ref{AssoCob}) and (\ref{eq:LieCob}) respectively. If $r$ is an
antisymmetric solution of the $Q$-RPYBE in the relative Poisson
algebra $(A,\cdot,[-,-], {\rm ad}(1_A))$, then
$(A,\cdot,[-,-],\Delta,\delta$, $\mathrm{ad}(1_{A}),-{\rm ad}(1_A))$ is a
coboundary relative Poisson bialgebra and hence  the induced
relative Poisson algebra $(A\bowtie A^{*},
\mathrm{ad}(1_{A})-({\rm ad}(1_A))^{*})$  with the bilinear form $\mathcal
B_{d}$ defined by Eq.~(\ref{eq:BilinearForm}) is a Frobenius
{Jacobi} algebra.
\end{cor}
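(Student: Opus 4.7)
The plan is to obtain Corollary~\ref{cor:con} as a direct composition of Corollary~\ref{cor:GPA-bi} and Corollary~\ref{pro:unit2}, with only one small preliminary observation needed to link them.

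First I would pin down $Q$. As noted in the paragraph preceding Corollary~\ref{pro:unit2}, for the unital relative Poisson algebra $(A,\cdot,[-,-],{\rm ad}(1_A))$ the only linear map dually representing it is $Q=-{\rm ad}(1_A)$: specializing Eq.~(\ref{eq:dualadj1}) at $x=1_A$ immediately forces $Q(y)=-{\rm ad}(1_A)(y)$, and conversely substituting $Q=-{\rm ad}(1_A)$ back the conditions of Corollary~\ref{cor:dualadj2} reduce to the derivation property of ${\rm ad}(1_A)$ on $(A,\cdot)$ and $(A,[-,-])$ together with the relative Leibniz rule~(\ref{eq:GLR}). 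Hence the $Q$ in the hypothesis of the corollary must be $-{\rm ad}(1_A)$, and the hypothesis of Corollary~\ref{cor:GPA-bi} that $(A,{\rm ad}(1_A))$ is dually represented by $Q$ is automatic.

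Next I would apply Corollary~\ref{cor:GPA-bi} directly: since $r$ is antisymmetric and solves the $(-{\rm ad}(1_A))$-RPYBE, the collection $(A,\cdot,[-,-],\Delta,\delta,{\rm ad}(1_A),-{\rm ad}(1_A))$ with $\Delta,\delta$ defined by Eqs.~(\ref{AssoCob}) and~(\ref{eq:LieCob}) is a coboundary relative Poisson bialgebra. This gives the first half of the conclusion.

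Finally I would invoke Corollary~\ref{pro:unit2}. By the last clause of Proposition~\ref{pro:unit}, any coboundary datum $r$ automatically yields $\Delta(1_A)=({\rm id}\otimes \mathcal{L}(1_A)-\mathcal{L}(1_A)\otimes {\rm id})r=0$, so Corollary~\ref{pro:unit2} applies and concludes that the induced relative Poisson algebra $(A\bowtie A^{*},{\rm ad}(1_A)-({\rm ad}(1_A))^{*})$ is unital, hence a Jacobi algebra, and that together with the nondegenerate invariant bilinear form $\mathcal{B}_{d}$ of Eq.~(\ref{eq:BilinearForm}) it forms a Frobenius Jacobi algebra. The main (and really only) obstacle is the preliminary identification of $Q$ in Step~1; beyond that the result is a pure application of the two prior corollaries, and in particular no separate check of $\Delta(1_A)=0$ is needed since the coboundary assumption supplies it for free.
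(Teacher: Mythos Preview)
Your proposal is correct and follows exactly the paper's own approach: the paper introduces this corollary with the sentence ``Combining Corollaries~\ref{cor:GPA-bi} and~\ref{pro:unit2} together,'' and gives no further argument. Your Step~1 identification of $Q=-\mathrm{ad}(1_A)$ is the content of the paragraph immediately preceding Corollary~\ref{pro:unit2}, so you have simply made explicit the glue between the two corollaries that the paper leaves implicit.
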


\begin{rmk} Unfortunately, the construction of antisymmetric solutions of the RPYBE in relative
Poisson algebras from relative pre-Poisson algebras given in
Proposition \ref{pro:final} cannot be applied directly to the
above conclusion since the sub-adjacent relative Poisson
algebra $(A,\cdot,[-,-],P)$ of a relative pre-Poisson algebra
$(A,\star,\circ,P)$ is not a Jacobi algebra (hence the semi-direct
product relative Poisson algebra $(A$
$\ltimes_{-\mathcal{L}^{*}_{\star},\mathcal{L}^{*}_{\circ}}A^{*},P-P^{*})$
is not a Jacobi algebra, either). In fact, let $(A,\star)$ be a
Zinbiel algebra. Suppose that the commutative associative algebra
$(A,\cdot)$ defined by Eq.~(\ref{eq:ZintoAss}) has the unit
$1_{A}$. Then we have
$$x=x\star 1_{A}+1_{A}\star x,\;\;\forall x\in A.$$
Thus $1_{A}=2(1_{A}\star 1_{A})$. On the other hand, we have
$$x\star(1_{A}\star 1_A)=(x\cdot 1_{A})\star 1_A=x\star 1_A,\;\;\forall x\in A.$$
Hence $x\star 1_{A}=0$ and thus $1_{A}\star x=x$ for all $x\in A$.
Taking $x=1_A$, we have $x=0$, which is a contradiction.
\end{rmk}

Next we give an approach in which the construction given by
Corollary~\ref{cor:con} can be applied.

\begin{lem}\label{pro:standard Jacobi algebra}
Let $(A,\cdot,[-,-],P)$ be a relative Poisson algebra. Extend
the vector space $A$ to be a $(\dim A+1)$-dimensional vector space
$\tilde A=A\oplus\mathbb{K}e$. Define two bilinear operations
$\cdot_{\tilde A},[-,-]_{\tilde A}: \tilde A\otimes \tilde
A\rightarrow \tilde A$ and a linear map $\tilde P:\tilde
A\rightarrow \tilde A$ as
\begin{eqnarray}&&x\cdot_{\tilde A} y=x\cdot y,\;\; e\cdot_{\tilde A} x=x\cdot_{\tilde A} e=x,\;\;e\cdot_{\tilde A} e=e,\\
&&[x,y]_{\tilde A}=[x,y],\;\;[e,x]_{\tilde A}=-[x,e]_{\tilde A}=P(x),\;\;[e,e]_{\tilde A}=0,\\
&&\tilde P(x)=P(x), \tilde P (e)=0,
\end{eqnarray}
for all $x,y\in A$.
Then $(\tilde A, \cdot_{\tilde
        A},[-,-]_{\tilde A},\tilde{P})$ is a unital relative Poisson algebra. Consequently, there is a corresponding Jacobi algebra $(\tilde A, \cdot_{\tilde
        A},[-,-]_{\tilde A})$, which is called the
     \textbf{extended Jacobi algebra}  of the relative Poisson
algebra $(A,\cdot,[-,-],P)$.
\end{lem}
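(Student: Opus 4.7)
The plan is to verify, one axiom at a time, that the quadruple $(\tilde A, \cdot_{\tilde A}, [-,-]_{\tilde A}, \tilde P)$ satisfies Definition~\ref{de:GPA} with $e$ as the unit, and then appeal to the one-to-one correspondence between Jacobi algebras and unital relative Poisson algebras established in the introduction to get the ``extended Jacobi algebra'' conclusion.

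First, I would check that $(\tilde A, \cdot_{\tilde A})$ is a commutative associative algebra with unit $e$. Commutativity on the $A$-part is inherited; the mixed relations $e\cdot_{\tilde A}x = x = x\cdot_{\tilde A}e$ and $e\cdot_{\tilde A}e=e$ are symmetric by construction, and associativity reduces (when one or more arguments equals $e$) to the tautology that multiplying by $e$ is the identity, so $e$ is the unit. Antisymmetry of $[-,-]_{\tilde A}$ is immediate from $[e,x]_{\tilde A}=-[x,e]_{\tilde A}=P(x)$ and $[e,e]_{\tilde A}=0$. For the Jacobi identity, the only nontrivial new case (up to cyclic symmetry) is when exactly one argument is $e$: then
\[
[[e,x]_{\tilde A},y]_{\tilde A}+[[x,y]_{\tilde A},e]_{\tilde A}+[[y,e]_{\tilde A},x]_{\tilde A}=[P(x),y]-P([x,y])+[x,P(y)],
\]
which vanishes by Eq.~(\ref{eq:DLA}). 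The case with two arguments equal to $e$ collapses to $0$.

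Next I would verify that $\tilde P$ is a derivation of both $(\tilde A,\cdot_{\tilde A})$ and $(\tilde A,[-,-]_{\tilde A})$. On $A$, this is just Eqs.~(\ref{eq:DCAA}) and~(\ref{eq:DLA}); the cases involving $e$ are forced by $\tilde P(e)=0$ together with $e\cdot_{\tilde A}x=x$ and $[e,x]_{\tilde A}=P(x)$, and the second one additionally uses that $P$ is a derivation of $(A,[-,-])$ to match $\tilde P([e,x]_{\tilde A})=P^2(x)$ with $[e,\tilde P(x)]_{\tilde A}=P^2(x)$.

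The main work is the relative Leibniz rule~(\ref{eq:GLR}) for $(\tilde A,\tilde P)$, and this is the step I expect to be the most delicate since there are several sub-cases depending on how many of $x,y,z$ equal $e$. When $x,y,z\in A$ it is Eq.~(\ref{eq:GLR}) for $(A,P)$. For $z=e$ and $x,y\in A$ both sides become $P(x\cdot y)=P(x)\cdot y+x\cdot P(y)$, using $\tilde P(e)=0$ and Eq.~(\ref{eq:DCAA}). For $x=e$, $y,z\in A$, the left side is $[z,y]$ while the right side is $-P(z)\cdot y+[z,y]+y\cdot P(z)=[z,y]$; the case $y=e$ is symmetric. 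All remaining cases where at least two of $x,y,z$ equal $e$ reduce by inspection after cancelling the $-P(z)$ and $+P(z)$ contributions coming from $[z,e]_{\tilde A}=-P(z)$ versus $e\cdot_{\tilde A}e\cdot_{\tilde A}\tilde P(z)=P(z)$. This exhausts the verification.

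Finally, since $(\tilde A,\cdot_{\tilde A},[-,-]_{\tilde A},\tilde P)$ is a unital relative Poisson algebra and the introduction establishes that $\tilde P$ is then forced to equal $\mathrm{ad}(e)$, the triple $(\tilde A,\cdot_{\tilde A},[-,-]_{\tilde A})$ is the corresponding Jacobi algebra, which we name the extended Jacobi algebra of $(A,\cdot,[-,-],P)$.
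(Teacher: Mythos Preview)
Your proposal is correct and follows essentially the same case-by-case verification as the paper's proof. The only stylistic difference is that the paper observes $\tilde P=\mathrm{ad}_{\tilde A}(e)$ immediately after establishing the Lie algebra structure, which makes the Lie-derivation property of $\tilde P$ automatic and slightly streamlines the argument, whereas you check that derivation property directly.
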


\begin{proof}
It is obvious that $(\tilde A, \cdot_{\tilde A})$ is a commutative associative algebra and $e$ is the unit.
Let $x,y,z\in A$. Then we have
$$[e,[x,y]_{\tilde A}]_{\tilde A}+[x,[y,e]_{\tilde A}]_{\tilde A}+[y,[e,x]_{\tilde A}]_{\tilde A}
=P([x,y])-[x,P(y)]+[y,P(x)]
=0.
$$
Hence $(\tilde A, [-,-]_{\tilde A})$ is a Lie algebra and
$\tilde P={\rm ad}_{\tilde A}(e)$. Moreover, we have
\begin{eqnarray*}
&&[e,x]_{\tilde A}\cdot_{\tilde A} y+x\cdot_{\tilde A}[e,y]_{\tilde A}+x\cdot_{\tilde A} y\cdot_{\tilde A} \tilde P(e)
=P(x)\cdot y+x\cdot P(y)
=P(x\cdot y)
=[e,x\cdot_{\tilde A} y]_{\tilde A},\\
&&[z,x]_{\tilde A}\cdot_{\tilde A} e+x\cdot_{\tilde A} [z,e]_{\tilde A}+x\cdot_{\tilde A} e\cdot_{\tilde A} \tilde P(z)
=[z,x]-x\cdot P(z)+x\cdot P(z)
=[z,x\cdot_{\tilde A} e]_{\tilde A}.
\end{eqnarray*}
Therefore $(\tilde A, \cdot_{\tilde
    A},[-,-]_{\tilde A},\tilde{P})$ is a unital relative Poisson algebra.
\end{proof}

\begin{pro}\label{standardJacobirep}
Let $(\mu,\rho,\alpha,V)$ be a representation of a relative
Poisson algebra $(A,\cdot,[-,-]$, $P)$. Then $(\tilde \mu, \tilde
\rho,V)$ is a representation of the extended Jacobi algebra
$(\tilde A, \cdot_{\tilde A},$ $[-,-]_{\tilde A})$ with the linear
maps $\tilde \mu, \tilde \rho:\tilde A\rightarrow {\rm End}(V)$
defined as
\begin{equation}\label{eq:tilde}\tilde \mu (x)=\mu(x), \;\;\tilde
\mu(e)={\rm id}_V,\;\;\tilde \rho(x)=\rho(x),\;\;\tilde
\rho(e)=\alpha, \;\;\forall x\in A.\end{equation}
 Moreover,
$(\tilde \mu,\tilde \rho,\alpha,V)$ is a representation of the
unital relative Poisson algebra $(\tilde A,\cdot_{\tilde A},$
$[-,-]_{\tilde A}$, $\tilde P)$.
\end{pro}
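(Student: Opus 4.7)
The plan is to verify the two claims in sequence, leveraging Proposition~\ref{pro:repJacobi and GPA} to reduce the second claim to the first. Specifically, the extended Jacobi algebra $(\tilde A,\cdot_{\tilde A},[-,-]_{\tilde A})$ has unit $1_{\tilde A}=e$ and $\tilde\rho(e)=\alpha$ by construction, so once we know $(\tilde\mu,\tilde\rho,V)$ is a Jacobi algebra representation, Proposition~\ref{pro:repJacobi and GPA} immediately upgrades it to a representation $(\tilde\mu,\tilde\rho,\tilde\rho(1_{\tilde A})=\alpha,V)$ of the corresponding unital relative Poisson algebra $(\tilde A,\cdot_{\tilde A},[-,-]_{\tilde A},\tilde P)$ (noting that $\tilde P=\mathrm{ad}_{\tilde A}(e)$ by Lemma~\ref{pro:standard Jacobi algebra}). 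Thus the real work lies in the first assertion.

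For the first assertion I would split the verification into three independent steps, each reduced to a finite case analysis on whether the arguments in $\tilde A=A\oplus\mathbb{K}e$ belong to $A$ or equal $e$. First, that $(\tilde\mu,V)$ is a unital representation of $(\tilde A,\cdot_{\tilde A})$: on $A\otimes A$ this is just $(\mu,V)$, while the cases involving $e$ are immediate since $\tilde\mu(e)=\mathrm{id}_V$. Second, that $(\tilde\rho,V)$ is a representation of the Lie algebra $(\tilde A,[-,-]_{\tilde A})$: the case $x,y\in A$ is just $(\rho,V)$; the case $x=y=e$ is trivial; and the mixed case $x\in A, y=e$ requires showing $\tilde\rho([x,e]_{\tilde A})=[\tilde\rho(x),\tilde\rho(e)]$, i.e.\ $-\rho(P(x))=[\rho(x),\alpha]$, which is exactly Eq.~\eqref{eq:rep2}.

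Third, I would verify the two Jacobi compatibility axioms Eqs.~\eqref{eq:de:repJacobi2} and \eqref{eq:de:repJacobi3} for $(\tilde\mu,\tilde\rho,V)$. With $1_{\tilde A}=e$ and $\tilde\rho(e)=\alpha$, Eq.~\eqref{eq:de:repJacobi2} reduces on $A\otimes A$ to Eq.~\eqref{eq:rep3}, while the mixed cases ($x$ or $y$ equal to $e$) collapse to trivial identities after cancellation. Similarly, Eq.~\eqref{eq:de:repJacobi3} becomes: on $A\otimes A$, $[e,y]_{\tilde A}=P(y)$ produces exactly the compatibility Eq.~\eqref{eq:CompStru}; the case $x=e$, $y\in A$ is trivial; and the case $x\in A$, $y=e$ yields $\alpha(\mu(x)v)-\mu(x)\alpha(v)-\mu(P(x))v=0$, which is precisely Eq.~\eqref{eq:rep1}.

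The steps are essentially bookkeeping, so there is no serious obstacle; the only subtlety worth highlighting is to correctly identify, in the mixed case of Eq.~\eqref{eq:de:repJacobi3}, that the coupling of $\alpha$ with $\mu$ through the derivation $P$ is exactly Eq.~\eqref{eq:rep1}, and analogously that the Lie-side check produces Eq.~\eqref{eq:rep2}. In other words, the three defining axioms \eqref{eq:rep1}--\eqref{eq:rep3} of a relative Poisson representation encode precisely the missing pieces that arise when $e$ is adjoined, which is what makes the extension work cleanly.
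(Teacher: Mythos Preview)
Your proposal is correct. Both you and the paper reduce the second claim to the first via Proposition~\ref{pro:repJacobi and GPA}, so the only difference lies in how the first claim is established. You carry out the direct verification route: a case analysis on whether each argument lies in $A$ or equals $e$, checking that $(\tilde\mu,V)$ is a unital associative representation, $(\tilde\rho,V)$ is a Lie representation, and that the Jacobi compatibility axioms \eqref{eq:de:repJacobi2}--\eqref{eq:de:repJacobi3} hold, with the mixed cases reducing precisely to the axioms \eqref{eq:rep1}--\eqref{eq:rep3} and \eqref{eq:CompStru} of a relative Poisson representation. The paper mentions this direct route but instead gives a conceptual argument: it forms the semi-direct product relative Poisson algebra $(A\ltimes_{\mu,\rho}V,P+\alpha)$, passes to its extended Jacobi algebra $\widetilde{A\ltimes_{\mu,\rho}V}$, and then observes that this coincides with the Jacobi algebra $\tilde A\ltimes_{\tilde\mu,\tilde\rho}V$, whence Proposition~\ref{pro:semiJacobi} gives the representation. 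Your approach makes explicit exactly which axiom of $(\mu,\rho,\alpha,V)$ is responsible for each new identity, at the cost of several case checks; the paper's approach avoids all case analysis by packaging everything into the functoriality of the semi-direct product and extension constructions.
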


\begin{proof} The first part of the conclusion can be proved by
checking Eqs.~(\ref{eq:de:repJacobi2})
and~(\ref{eq:de:repJacobi3}) directly or as follows.
 Since $(\mu,\rho,\alpha,V)$ is a representation of
$(A,\cdot,[-,-],P)$, $(A\ltimes_{\mu,\rho} V,P+\alpha)$ is a
relative Poisson algebra. Hence there is an extended Jacobi
algebra $\widetilde {A\ltimes_{\mu,\rho}V}$ in which the bilinear
operations $\cdot_{\widetilde {A\ltimes_{\mu,\rho}V}}$ and
$[-,-]_{\widetilde {A\ltimes_{\mu,\rho}V}}$ on $A\oplus V$ are the
same as the ones of $(A\ltimes_{\mu,\rho} V,P+\alpha)$ and for all
$x\in A, u\in V$,
\begin{eqnarray*}
&&e\cdot_{\widetilde {A\ltimes_{\mu,\rho}V}} x=x,\;\;
e\cdot_{\widetilde {A\ltimes_{\mu,\rho}V}} u=u,\;\;
\\&&[e,x]_{\widetilde {A\ltimes_{\mu,\rho}V}}=(P+\alpha)x=P(x),\;\;[e,u]_{\widetilde
{A\ltimes_{\mu,\rho}V}}=(P+\alpha)u=\alpha(u).
\end{eqnarray*}
On the other hand, the above Jacobi algebra is exactly the Jacobi
algebra structure on the direct sum $\tilde A\oplus V=A\oplus
V\oplus \mathbb K e$ given by the Jacobi algebra $(\tilde A, \cdot_{\tilde
    A},[-,-]_{\tilde A})$ and
the linear maps $\tilde \mu, \tilde \rho:\tilde A\rightarrow {\rm
End}(V)$ defined by Eq.~(\ref{eq:tilde}) through
Eqs.~(\ref{eq:SDASSO}) and (\ref{eq:SDLie}). Thus $(\tilde \mu,
\tilde \rho,V)$ is a representation of $(\tilde A, \cdot_{\tilde
A},[-,-]_{\tilde A})$. The second part of the conclusion follows
from Proposition \ref{pro:repJacobi and GPA}.
\end{proof}

%\textcolor{blue}{GL:I think the following corollary would help the last example be understood better.}

%\begin{cor}\label{cor:casesfromGPPA}
%Let $(A,\star,\circ,P)$ be a generalized pre-Poisson algebra, and its sub-adjacent relative Poisson algebra be $(A,\cdot,[-,-],P)$. Then $(\mathcal{L}_{\star},\mathcal{L}_{\circ},P;A)$ is a representation of $(A,\cdot,[-,-],P)$ and by Corollary \ref{cor:automap}, $(-\mathcal{L}^{*}_{\star},\mathcal{L}^{*}_{\circ},-P^{*};A^{*})$ is also a representation of $(A,\cdot,[-,-],P)$. Moreover, by Proposition \ref{standardJacobirep}, $(-\tilde{\mathcal{L}^{*}_{\star}},\tilde{\mathcal{L}^{*}_{\circ}};A^{*})$ is a representation of the extended Jacobi algebra $(\tilde A, \cdot_{\tilde A},[-,-]_{\tilde A})$, where
%\begin{equation}\label{eq:cor:casesfromGPPA}
%-\tilde{\mathcal{L}^{*}_{\star}}(x)=-\mathcal{L}^{*}_{\star}(x), -\tilde{\mathcal{L}^{*}_{\star}}(e)=\mathrm{id}_{A^{*}}, \tilde{\mathcal{L}^{*}_{\circ}}(x)=\mathcal{L}^{*}_{\circ}(x), \tilde{\mathcal{L}^{*}_{\circ}}(e)=-P^{*}, \forall x\in A,
%\end{equation}
%and by Proposition \ref{pro:semiJacobi} , there is a semi-direct product Jacobi algebra $(\tilde A\ltimes A^{*}:=\tilde A\ltimes_{-{\widetilde
%{\mathcal{L}_{\star}}}^*,\widetilde
%{\mathcal{L}_{\circ}}^{*}}A^{*}, \cdot_{\tilde A\ltimes A^{*}},[-,-]_{\tilde A\ltimes A^{*}}).$
%\end{cor}

\begin{cor}\label{pro:OoperatorJacobi}
Let $T:V\rightarrow A$ be an $\mathcal{O}$-operator of a
relative Poisson algebra $(A,\cdot,[-,-],$ $P)$ associated to a
representation $(\mu,\rho,\alpha,V)$. Then
 $T:V\rightarrow
A\subset \tilde A$ is also an $\mathcal{O}$-operator of the unital
relative Poisson algebra $(\tilde A,\cdot_{\tilde
A},[-,-]_{\tilde A},\tilde P)$ (that is, the extended Jacobi
algebra $(\tilde A, \cdot_{\tilde A},[-,-]_{\tilde A})$)
associated to the representation $(\tilde \mu,\tilde
\rho,\alpha,V)$.
\end{cor}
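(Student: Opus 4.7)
The plan is to verify directly the three defining equations of Definition~\ref{defi:O-operator} for $T$ viewed as a map into $\tilde A$. First, I would invoke Proposition~\ref{standardJacobirep}, which already supplies that $(\tilde \mu,\tilde \rho,\alpha,V)$ is a representation of the unital relative Poisson algebra $(\tilde A,\cdot_{\tilde A},[-,-]_{\tilde A},\tilde P)$, so no work is needed on the representation side.

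Next, observe that since $T$ takes values in $A\subset \tilde A$, we have $T(u),T(v)\in A$ for all $u,v\in V$. By the construction in Lemma~\ref{pro:standard Jacobi algebra}, the operations $\cdot_{\tilde A}$ and $[-,-]_{\tilde A}$ restrict on $A\times A$ to the original operations $\cdot$ and $[-,-]$, and $\tilde P$ restricts on $A$ to $P$. Similarly, by the definition in Eq.~(\ref{eq:tilde}), $\tilde \mu(x)=\mu(x)$ and $\tilde \rho(x)=\rho(x)$ for all $x\in A$. Hence
\[
T(u)\cdot_{\tilde A} T(v)=T(u)\cdot T(v),\quad [T(u),T(v)]_{\tilde A}=[T(u),T(v)],\quad \tilde P(T(u))=P(T(u)),
\]
and
\[
\tilde \mu(T(u))v=\mu(T(u))v,\quad \tilde \rho(T(u))v=\rho(T(u))v.
\]

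Substituting these identities into Eqs.~(\ref{eq:O1}), (\ref{eq:O2}) and (\ref{eq:O3}) with the data $(\tilde \mu,\tilde \rho,\alpha,V)$ on $(\tilde A,\tilde P)$ reduces them precisely to Eqs.~(\ref{eq:O1}), (\ref{eq:O2}) and (\ref{eq:O3}) for $T$ as an $\mathcal O$-operator of $(A,P)$ associated to $(\mu,\rho,\alpha,V)$, which hold by hypothesis. There is no genuine obstacle here — the content is that the extension in Lemma~\ref{pro:standard Jacobi algebra} leaves the subalgebra $A$ structurally untouched, and the only mild point worth noting is that $T$ never produces a term along $\mathbb K e$, so the extra unit component of $\tilde A$ plays no role.
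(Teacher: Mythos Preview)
Your proof is correct and follows the same approach as the paper. The paper's own proof simply says ``It follows from Lemma~\ref{pro:standard Jacobi algebra} and Proposition~\ref{standardJacobirep},'' leaving implicit exactly the restriction argument you spell out; your version is a faithful expansion of that one-line citation.
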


\begin{proof}
It follows from Lemma~\ref{pro:standard Jacobi algebra} and
Proposition \ref{standardJacobirep}.
\end{proof}

Thus there is the following construction of Frobenius Jacobi
algebras from relative pre-Poisson algebras.

\begin{thm}\label{cor:final}
Let $(A,\star,\circ,P)$ be a relative pre-Poisson algebra and
$(A,\cdot,[-,-],P)$  be the sub-adjacent relative Poisson
algebra. Let $\{e_1,\cdots, e_n\}$ be a basis of $A$ and
$\{e^{*}_{1},\cdots, e^{*}_{n}\}$ be the dual basis.  Then
\begin{equation}\label{eq:cor:final}
r=\sum^{n}_{i=1}(e_{i}\otimes e^{*}_{i}-e^{*}_{i}\otimes e_{i})
\end{equation}
is an antisymmetric solution of the $(-\tilde P+P^{*})$-RPYBE in
the unital relative Poisson algebra $(\tilde A\ltimes
A^{*}:=\tilde A\ltimes_{-{\widetilde
{\mathcal{L}_{\star}}}^*,\widetilde
{\mathcal{L}_{\circ}}^{*}}A^{*},\tilde P-P^{*})$. Moreover, there
is a relative Poisson bialgebra $$(\tilde A\ltimes
A^{*},\cdot_{\tilde A\ltimes A^{*}},[-,-]_{\tilde A\ltimes
A^{*}},\Delta,\delta,\tilde P-P^{*},-\tilde P+P^{*}),$$ where the
linear maps $\Delta,\delta:\tilde A\ltimes A^{*}\rightarrow
(\tilde A\ltimes A^{*})\otimes (\tilde A\ltimes A^{*})$ are
defined respectively by Eqs.~(\ref{AssoCob}) and (\ref{eq:LieCob})
through $r$ such that the induced relative Poisson algebra
$(\tilde A\ltimes A^{*})\bowtie(\tilde A\ltimes A^{*})^*$  with
the bilinear form $\mathcal B_{d}$ defined by
Eq.~(\ref{eq:BilinearForm}) is a Frobenius Jacobi algebra.
\end{thm}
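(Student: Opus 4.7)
The plan is to derive this theorem by composing four earlier results: Proposition \ref{pro:O-operator} (identifying $\mathrm{id}_A$ as an $\mathcal{O}$-operator), Corollary \ref{pro:OoperatorJacobi} (lifting this $\mathcal{O}$-operator to the extended Jacobi algebra), Corollary \ref{cor:cases} (producing antisymmetric RPYBE solutions and coboundary relative Poisson bialgebras from $\mathcal{O}$-operators), and finally Corollary \ref{cor:con} (assembling the Frobenius Jacobi algebra via the double construction).

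First I would invoke Proposition \ref{pro:O-operator} to conclude that $\mathrm{id}_A$ is an $\mathcal{O}$-operator of the sub-adjacent relative Poisson algebra $(A,\cdot,[-,-],P)$ associated to $(\mathcal{L}_\star,\mathcal{L}_\circ,P,A)$, and then Corollary \ref{pro:OoperatorJacobi} (combined with Proposition \ref{standardJacobirep}) to promote $\mathrm{id}\colon A\to \tilde A$ to an $\mathcal{O}$-operator of the unital relative Poisson algebra $(\tilde A,\cdot_{\tilde A},[-,-]_{\tilde A},\tilde P)$ associated to the representation $(\widetilde{\mathcal{L}_\star},\widetilde{\mathcal{L}_\circ},P,A)$. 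Next, applying Corollary \ref{cor:cases} to this $\mathcal{O}$-operator with $\alpha=P$: the identification of $\mathrm{id}$ with the element $\sum_i e_i\otimes e_i^*\in (\tilde A\ltimes A^*)^{\otimes 2}$ gives $r=\sum_i(e_i\otimes e_i^*-e_i^*\otimes e_i)$, and Corollary \ref{cor:cases} asserts that this $r$ is an antisymmetric solution of the $(-\tilde P+P^*)$-RPYBE in the relative Poisson algebra $(\tilde A\ltimes_{-\widetilde{\mathcal{L}_\star}^*,\widetilde{\mathcal{L}_\circ}^*}A^*,\,\tilde P-P^*)$, that $-\tilde P+P^*$ dually represents this algebra, and that the induced data form a coboundary relative Poisson bialgebra $(\tilde A\ltimes A^*,\cdot,[-,-],\Delta,\delta,\tilde P-P^*,-\tilde P+P^*)$. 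This yields both the RPYBE statement and the bialgebra statement of the theorem.

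To conclude the Frobenius Jacobi part, I must identify $(\tilde A\ltimes A^*,\cdot,[-,-])$ as a Jacobi algebra whose corresponding derivation coincides with $\tilde P-P^*$. Concretely I verify two points. First, the commutative associative product on $\tilde A\ltimes A^*$ is unital with unit $e$: since $\widetilde{\mathcal{L}_\star}(e)=\mathrm{id}_A$ by Proposition \ref{standardJacobirep}, one has $-\widetilde{\mathcal{L}_\star}^*(e)=\mathrm{id}_{A^*}$, so the semi-direct product action of $e$ on $A^*$ is the identity, and $e$ is already the unit of $\tilde A$. Second, $\mathrm{ad}(e)=\tilde P-P^*$: since $\widetilde{\mathcal{L}_\circ}(e)=P$, the semi-direct product Lie bracket gives $[e,x+a^*]=\tilde P(x)-P^*(a^*)$ for all $x\in\tilde A$, $a^*\in A^*$. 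Hence $(\tilde A\ltimes A^*,\cdot,[-,-])$ is a Jacobi algebra whose corresponding unital relative Poisson algebra has derivation $\tilde P-P^*=\mathrm{ad}(e)$, and by Corollary \ref{cor:dualadj2} the unique linear map dually representing it is $-\mathrm{ad}(e)=-\tilde P+P^*$, which matches the $Q$ in our bialgebra.

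With this identification Corollary \ref{cor:con} applies directly to the coboundary relative Poisson bialgebra from Step 2, and the double construction $(\tilde A\ltimes A^*)\bowtie(\tilde A\ltimes A^*)^*$ equipped with $\mathcal{B}_d$ from Eq.~(\ref{eq:BilinearForm}) is a Frobenius Jacobi algebra. The main obstacle is the bookkeeping of Step 3, namely tracking how the unit $e$ of $\tilde A$ behaves under the semi-direct product with $A^*$ and showing that its adjoint action recovers the prescribed derivation $\tilde P-P^*$; once these two identities are in place, every remaining assertion is an immediate invocation of a previously established result.
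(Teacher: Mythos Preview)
Your proposal is correct and follows essentially the same approach as the paper's proof: Proposition~\ref{pro:O-operator} $\Rightarrow$ Corollary~\ref{pro:OoperatorJacobi} $\Rightarrow$ Corollary~\ref{cor:cases} $\Rightarrow$ Corollary~\ref{cor:con}. Your explicit verification that $e$ is the unit of $\tilde A\ltimes A^{*}$ and that $\mathrm{ad}(e)=\tilde P-P^{*}$ fills in a detail the paper leaves implicit when it writes ``the rest follows from Corollary~\ref{cor:con}''; this check is exactly what is needed to justify that invocation.
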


\begin{proof}
By Proposition \ref{pro:O-operator}, the identity map ${\rm id}_A$
is an $\mathcal O$-operator of the relative Poisson algebra
$(A,\cdot,[-,-],P)$ associated to
$(\mathcal{L}_{\star},\mathcal{L}_{\circ},P,A)$.
 By Corollary~\ref{pro:OoperatorJacobi}, ${\rm id}_A$ is also an
 $\mathcal O$-operator of the unital
relative Poisson algebra $(\tilde A,\cdot_{\tilde A},[-,-]_{\tilde
A},\tilde P)$ associated to the representation $(\widetilde
{\mathcal{L}_{\star}},\widetilde {\mathcal{L}_{\circ}}, P,A)$.
Then by Corollary~\ref{cor:cases}, $r$ is an antisymmetric
solution of the $(-\tilde P+P^{*})$-RPYBE in the relative Poisson
algebra $(\tilde A\ltimes A^{*},\tilde P-P^{*})$. Further the
relative Poisson algebra $(\tilde A\ltimes A^{*},\tilde P-P^{*})$
is dually represented by $-\tilde P+P^{*}$. The rest follows from
Corollary~\ref{cor:con}.
\end{proof}

\begin{ex}\label{ex:Jacobi algebra3}
Let $(A,\star,\circ,P)$ be the 3-dimensional relative
pre-Poisson algebra given by Example \ref{ex:generalized
pre-Poisson1}. Then the sub-adjacent relative Poisson algebra
$(A,\cdot,[-,-],P)$ is given by the following non-zero products:
$$e_{1}\cdot e_{1}=2e_{3},\;\;e_{1}\cdot e_{2}=e_{3},\;\;[e_{1},e_{2}]=e_{3}.$$
By Lemma~\ref{pro:standard Jacobi algebra}, we have the
extended Jacobi algebra $(\tilde A,\cdot_{\tilde A},[-,-]_{\tilde
A})$ whose non-zero products are given by the above products and
the following products:
\begin{eqnarray*}
&&e\cdot_{\tilde A}e_1=e_1,\;\; e\cdot_{\tilde A}e_2=e_2,\;\; e\cdot_{\tilde A}e_3=e_3,\;\; e\cdot_{\tilde A}e=e_,\\
&&[e,e_{1}]_{\tilde A}=P(e_{1})=e_{1}+e_{2},\;\;[e,e_{2}]_{\tilde A}=P(e_{2})=2e_{2},\;\;[e,e_{3}]_{\tilde A}=P(e_{3})=3e_{3}.
\end{eqnarray*}
Let $\{e_1^*,e_2^*,e_3^*\}$ be the basis of $A^*$ which is dual to
$\{e_1,e_2,e_3\}$. By Proposition \ref{standardJacobirep}, there
is a Jacobi algebra $(\tilde A\ltimes A^{*}:=\tilde
A\ltimes_{-{\widetilde {\mathcal{L}_{\star}}}^*,\widetilde
{\mathcal{L}_{\circ}}^{*}}A^{*}, \cdot_{\tilde A\ltimes A^{*}},
[-,-]_{\tilde A\ltimes A^{*}}) $, where the non-zero products are
given by the above non-zero products of $(\tilde A,\cdot_{\tilde
A},[-,-]_{\tilde A})$ and the following products:
\begin{eqnarray*}
&& e\cdot_{\tilde A\ltimes A^{*}} e^{*}_{1}=-{\widetilde {\mathcal{L}_{\star}}}^*(e)e^{*}_{1}=e^{*}_{1},\;\;
e\cdot_{\tilde A\ltimes A^{*}} e^{*}_{2}=-{\widetilde {\mathcal{L}_{\star}}}^*(e)e^{*}_{2}=e^{*}_{2},\\
&&e\cdot_{\tilde A\ltimes A^{*}} e^{*}_{3}=-{\widetilde {\mathcal{L}_{\star}}}^*(e)e^{*}_{3}=e^{*}_{3},\;\;
e_1\cdot_{\tilde A\ltimes A^{*}} e^{*}_{3}%=-{\widetilde {\mathcal{L}_{\star}}}^*(e_1)e^{*}_{3}
=-{{\mathcal{L}_{\star}}}^*(e_1)e^{*}_{3}
=e^{*}_{1}+e_2^*,\\
&&[e,e^{*}_{1}]_{\tilde A\ltimes A^{*}}
={\widetilde {\mathcal{L}_{\circ}}}^*(e)e^{*}_{1}=-P^{*}(e^{*}_{1})=-e^{*}_{1},\;\;\\
&&[e,e^{*}_{2}]_{\tilde A\ltimes A^{*}}
={\widetilde {\mathcal{L}_{\circ}}}^*(e)e^{*}_{2}=-P^{*}(e^{*}_{2})=-e^{*}_{1}-2e^{*}_{2},\\
&&[e,e^{*}_{3}]_{\tilde A\ltimes A^{*}}
={\widetilde {\mathcal{L}_{\circ}}}^*(e)e^{*}_{3}=-P^{*}(e^{*}_{3})=-3e^{*}_{3},\\
&&[e_1,e^{*}_{3}]_{\tilde A\ltimes A^{*}}
=\mathcal{L}^{*}_{\circ}(e_{1})e^{*}_{3}=-e^{*}_{1}-e^{*}_{2}.
\end{eqnarray*}
In order to simplify the notations, we replace $\tilde A\ltimes
A^{*}$ by $J$, $\cdot_{\tilde A\ltimes A^{*}}$ by $\cdot$,
$[-,-]_{\tilde A\ltimes A^{*}}$ by $[-,-]$, and
$\{e,e_1,e_2,e_3,e_1^*,e_2^*,e_3^*\}$ by
$\{E,E_{1},E_{2},E_{3},E_{4},E_{5},E_{6}\}$. Then
$(J,\cdot,[-,-])$ is a Jacobi algebra with a basis
$\{E,E_{1},E_{2},E_{3},E_{4},E_{5},E_{6}\}$, in which $E$ is the
unit  and the other non-zero products are  given by
\begin{eqnarray*}
&&E_{1}\cdot E_{1}=2E_{3},\;\;E_{1}\cdot E_{2}=E_{3},\;\;E_{1}\cdot E_{6}=E_{4}+E_{5},\\
&&[E,E_{1}]=E_{1}+E_{2},\;\;[E,E_{2}]=2E_{2},\;\;[E,E_{3}]=3E_{3},\;\;[E,E_{4}]=-E_{4},\;\;\\
&&[E,E_{5}]=-E_{4}-2E_{5},\;\;[E,E_{6}]=-3E_{6},\;\;[E_{1},E_{2}]=E_{3},\;\;[E_{1},E_{6}]=-E_{4}-E_{5}.
\end{eqnarray*}
Obviously it is isomorphic to the Jacobi algebra $(\tilde A\ltimes
A^{*}, \cdot_{\tilde A\ltimes A^{*}}, [-,-]_{\tilde A\ltimes
A^{*}})$. {By Eq.~(\ref{eq:cor:final}), we set}
$$r=E_{1}\otimes E_{4}-E_{4}\otimes E_{1}+E_{2}\otimes E_{5}-E_{5}\otimes E_{2}+E_{3}\otimes E_{6}-E_{6}\otimes E_{3}.$$
Let $\Delta,\delta:J\rightarrow J\otimes J$ be linear maps
given by Eqs.~(\ref{AssoCob}) and (\ref{eq:LieCob}) respectively. Then %we have
\begin{eqnarray*}
&&\Delta(E_{1})=\Delta(E_{2})=-E_{3}\otimes E_{4}-E_{4}\otimes E_{3},\\
&&\Delta(E_{6})=-2E_{4}\otimes E_{4}-E_{4}\otimes E_{5}-E_{5}\otimes E_{4},\\
&&\delta(E_{1})=\delta(E_{2})=-E_{3}\otimes E_{4}+E_{4}\otimes E_{3},\\
&&\delta(E_{6})=-E_{4}\otimes E_{5}+E_{5}\otimes E_{4},\\
&&\Delta(E)=\Delta(E_{3})=\Delta(E_{4})=\Delta(E_{5})=\delta(E)=\delta(E_{3})=\delta(E_{4})=\delta(E_{5})=0.
\end{eqnarray*}
Hence $(J,\cdot,
[-,-],\Delta,\delta,\mathrm{ad}_{J}(E),-\mathrm{ad}_{J}(E))$ is a
relative Poisson bialgebra. Moreover, the non-zero {products} on
the relative Poisson algebra $J^{*}$ {are} given by
\begin{eqnarray*}&&E_{3}^{*}\cdot E^{*}_{4}=-E^{*}_{1}-E^{*}_{2},\;\;E^{*}_{4}\cdot E^{*}_{4}=-2E^{*}_{6},\;\;E^{*}_{4}\cdot E^{*}_{5}=-E^{*}_{6},\\
&&[E^{*}_{3},E^{*}_{4}]=-E^{*}_{1}-E^{*}_{2},\;\;[E^{*}_{4}, E^{*}_{5}]=-E^{*}_{6}.
\end{eqnarray*}
With the above relative Poisson algebra structures on $J$ and
$J^*$ together, $J\bowtie J^*$ is a relative Poisson algebra with
the unit $E$, in which the other non-zero products are given by
Eqs.~(\ref{eq:Asso}) and (\ref{eq:Lie}) with respect to the
matched pair
$((J,\mathrm{ad}_{J}(E)),(J^{*},-\mathrm{ad}_{J}(E)^{*}),-\mathcal{L}^{*}_{J},-\mathcal{L}^{*}_{J^{*}},\mathrm{ad}^{*}_{J},\mathrm{ad}^{*}_{J^{*}})$
as follows. {\small\begin{eqnarray*}
&&E_{1}\cdot E^{*}_{1}=E^{*},\;\; E_{1}\cdot E^{*}_{3}=-E_{4}+2E^{*}_{1}+E^{*}_{2},\;\; E_{1}\cdot E^{*}_{4}=-E_{3}+E^{*}_{6},\;\; E_{1}\cdot E^{*}_{5}=E^{*}_{6},\\
&&E_{2}\cdot E^{*}_{2}=E^{*},\;\; E_{2}\cdot E^{*}_{3}=-E_{4}+E^{*}_{1},\;\; E_{2}\cdot E^{*}_{4}=-E_{3},\;\; E_{3}\cdot E^{*}_{3}=E^{*},\;\; E_{4}\cdot E^{*}_{4}=E^{*},\\
&&E_{5}\cdot E^{*}_{5}=E^{*},\;\; E_{6}\cdot E^{*}_{4}=-2E_{4}-E_{5}+E^{*}_{1},\;\; E_{6}\cdot E^{*}_{5}=-E_{4}+E^{*}_{1},\;\; E_{6}\cdot E^{*}_{6}=E^{*},\\
&&{[E,E^{*}_{1}]=-E^{*}_{1},\;\; [E,E^{*}_{2}]=-E^{*}_{1}-2E^{*}_{2},\;\; [E,E^{*}_{3}]=-3E^{*}_{3},\;\; [E,E^{*}_{4}]=E^{*}_{4}+E^{*}_{5},\;\; [E,E^{*}_{5}]=2E^{*}_{5},}\\
&& [E,E^{*}_{6}]=3E^{*}_{6},\;\; [E_{1},E^{*}_{1}]=E^{*},\;\; [E_{1},E^{*}_{2}]=E^{*},\;\; [E_{1},E^{*}_{3}]=-E_{4}-E^{*}_{2},\;\;[E_{1},E^{*}_{4}]=E_{3}+E^{*}_{6},\\
&&[E_{1},E^{*}_{5}]=E^{*}_{6},\;\; [E_{2},E^{*}_{2}]=2E^{*},\;\; [E_{2},E^{*}_{3}]=-E_{4}+E^{*}_{1},\;\; [E_{2},E_{4}^{*}]=E_{3},\;\; [E_{3},E^{*}_{3}]=3E^{*},\\
&&[E_{4},E^{*}_{4}]=-E^{*},\;\; [E_{5},E^{*}_{4}]=-E^{*},\;\; [E_{5},E^{*}_{5}]=-2E^{*},\;\; [E_{6},E^{*}_{4}]=-E_{5}-E^{*}_{1},\;\;[E_{6},E^{*}_{5}]=E_{4}-E^{*}_{1},\\
&&[E_{6},E^{*}_{6}]=-3E^{*}.
\end{eqnarray*}}
Therefore by Theorem \ref{cor:final}, $(J\bowtie
J^{*},\mathcal{B}_{d})$ is a 14-dimensional Frobenius Jacobi
algebra, where $\mathcal{B}_{d}$ is given by
Eq.~(\ref{eq:BilinearForm}).
\end{ex}

%\cm{Unifying the expression of references}

\bigskip

 \noindent
 {\bf Acknowledgements.}  This work is supported by
NSFC (11931009, 12271265), the Fundamental Research Funds for the
Central Universities and Nankai Zhide Foundation. The authors
thank the referee for the valuable
  suggestions.

\end{document}